\numberwithin{equation}{section}
\newtheorem{theorem}{Theorem}
\newtheorem{proposition}[theorem]{Proposition}
\newtheorem{lemma}[theorem]{Lemma}
\newtheorem{corollary}[theorem]{Corollary}
\theoremstyle{remark}
\newtheorem{example}{Example}
\def\Sp{\operatorname{Sp}}
\def\id{\operatorname{id}}
\def\Obj{\operatorname{Ob}}
\def\GF{\operatorname{GF}}
\def\ff{{\boldsymbol f}}
\def\gg{{\boldsymbol g}}
\def\ww{{\boldsymbol w}}
\def\II{{\boldsymbol I}}
\def\JJ{{\boldsymbol J}}
\def\O{\mathbf{\Omega}}
\def\ph{\varphi}
\def\ps{\psi}
\def\Eta{{\boldsymbol\eta}}
\newcommand{\N}{\mathbb{N}}
\newcommand{\dbin}[2]{\left(\kern-0.4em{\binom#1#2}
\kern-0.4em\right)}
\begin{document}
\title[Decomposable functors and the exponential principle, II]
{Decomposable functors and the exponential principle, II}
\newbox\Aut
\setbox\Aut\vbox{
\centerline{\sc Peter J. Cameron$^\dagger$,
Christian Krattenthaler$^{\ddagger}$, \rm and \sc Thomas W. M\"uller$^\dagger$}
\vskip18pt
\centerline{$^\dagger$ School of Mathematical Sciences,}
\centerline{Queen Mary \& Westfield College, University of London,}
\centerline{Mile End Road, London E1 4NS, United Kingdom.}
\centerline{\footnotesize WWW: \tt http://www.maths.qmw.ac.uk/\lower0.5ex\hbox{\~{}}pjc/}
\centerline{\footnotesize WWW: \tt http://www.maths.qmw.ac.uk/\lower0.5ex\hbox{\~{}}twm/}
\vskip18pt
\centerline{$^\ddagger$ Fakult\"at f\"ur Mathematik, Universit\"at Wien,}
\centerline{Nordbergstra\ss e 15, A-1090 Vienna, Austria.}
\centerline{\footnotesize WWW: \footnotesize\tt
http://www.mat.univie.ac.at/\lower0.5ex\hbox{\~{}}kratt} 
}
\author[P. J. Cameron, C. Krattenthaler and T. W. M\"uller]{\box\Aut}

\address{School of Mathematical Sciences, Queen Mary
\& Westfield College, University of London,
Mile End Road, London E1 4NS, United Kingdom.\newline
WWW: {\tt http://www.maths.qmw.ac.uk/\lower0.5ex\hbox{\~{}}pjc/},
\tt http://www.maths.qmw.ac.uk/\lower0.5ex\hbox{\~{}}twm/.}

\address{Fakult\"at f\"ur Mathematik, Universit\"at Wien,
Nordbergstra{\ss}e~15, A-1090 Vienna, Austria.
WWW: \tt http://www.mat.univie.ac.at/\lower0.5ex\hbox{\~{}}kratt.}

\keywords{labelled combinatorial structures, multisort species, 
exponential principle, functor decomposition, magic squares}

\subjclass[2000]{Primary 05A15;
Secondary 05A16 05A19 05C30}

\dedicatory{Dedi\'e \`a la m\'emoire de Pierre Leroux}

\thanks{$^\ddagger$Research partially supported by the Austrian
Science Foundation FWF, grants Z130-N13 and S9607-N13,
the latter in the framework of the National Research Network
``Analytic Combinatorics and Probabilistic Number Theory"}

\begin{abstract}
We develop a new setting for the exponential principle in the context
of multisort
species, where indecomposable objects are generated intrinsically
instead of being given in advance. Our approach uses the language of
functors and natural transformations (composition operators), 
and we show that, somewhat surprisingly, 
a single axiom for the composition already suffices to
guarantee validity of the exponential formula.
We provide various illustrations of our theory, among which are
applications to the enumeration of (semi-)magic squares.
\end{abstract}

\maketitle

\thispagestyle{myheadings}
\font\rms=cmr8 
\font\its=cmti8 
\font\bfs=cmbx8

\markright{\its S\'eminaire Lotharingien de
Combinatoire \bfs 61A \rms (2010), Article~B61Am\hfill}
\def\thepage{}

\section{Introduction}
One of the corner stones of combinatorial enumeration is 
a theory which runs under several
different names, for instance 
{\it theory of species} \cite{BLL}, {\it theory of
exponential structures} \cite[Ch.~5]{StanBI}, 
{\it theory of exponential families} \cite{Wilf}, {\it symbolic method} 
\cite[Part~A]{FlSeAA}, {\it th\'eorie du compos\'e partitionnel} \cite{FoatAZ},
{\it theory of prefabs} \cite{BG},
all of which are more or less equivalent. It is probably fair to say
that the most elaborate of these theories is the theory of species, as
formulated by Joyal \cite{Joyal}
(with the functorial concept of species of structures going back to
Ehresmann \cite{EhreAA}) and further developed by many other
authors. It provides the most general framework for such a theory, at
the expense of employing a rather abstract language, namely that
of category theory. 

A fundamental theorem in each of these theories is the so-called
{\it exponential formula}. Roughly speaking, given a family $\mathcal G$ 
of labelled combinatorial objects (``components"), one produces a
larger family $\mathcal F$ (``composite objects") 
whose objects are obtained by putting together various
elements of $\mathcal G$. The theorem then states that
the (exponential) generating function for $\mathcal F$ equals the
exponential of the (exponential) generating function for $\mathcal G$.

The aim of the present article is to develop a setting, where one
starts with a family $\mathcal F$ of labelled combinatorial objects 
(``composite objects") and a
composition of such objects, and then identifies, in an intrinsic way, a
subfamily $\mathcal G$ of indecomposable objects (``components"), such
that each element of $\mathcal F$ can be decomposed into
objects from $\mathcal G$, and such that the exponential formula holds
for $\mathcal F$ and $\mathcal G$. The main point here is that, in
contrast to the usual set-up for the exponential formula,
indecomposable objects are {\it not\/} given in advance, but are
defined {\it inherently} via the composition operation. 
In particular, our theory leads to a uniform definition of the
property to be {\it indecomposable} for arbitrary labelled combinatorial
objects equipped with a composition operator. Interestingly,
we show that a single
axiom for the composition operator suffices to guarantee validity of the
exponential formula.
The natural language for formulating
a corresponding theory is that of functors and natural
transformations. Consequently, our presentation will be in the context
of species theory.

For ``ordinary" species ($1$-sort species), such a theory has been
presented in \cite{DM} on the basis of two axioms for the composition
operator. In the present article, we extend this
approach to {\it weighted multisort species}. 
Moreover, we show that, actually, one
of the axioms in \cite{DM} can be derived from the other, and that
also in our multivariate setting a single axiom suffices.
Strictly speaking, our presentation does not cover weighted species
(in the sense of \cite[Sec.~6]{Joyal}, \cite[p.~104]{BLL}) 
in full generality; rather, we restrict ourselves to the case where
the defining functor maps to a category of finite sets, thus avoiding
unnecessary technicalities. However, 
extension to the general case of weighted multisort species is
completely straightforward, and is left to the interested
reader (see also Footnote~\ref{foot1}).

\pagenumbering{arabic}
\addtocounter{page}{1}
\markboth{\SMALL P. J. CAMERON, C. KRATTENTHALER AND T. W. M\"ULLER}{\SMALL 
DECOMPOSABLE FUNCTORS AND THE EXPONENTIAL PRINCIPLE, II}

An exponential principle in a wider context, that encompasses
multisort species as a particular case, has been defined by Menni in
\cite{MennAA} (see \cite{MennAB} for further work in this
direction). Indeed, Menni's work and ours partially overlap.
In order to explain the relation between the two, recall that --- as
already pointed out by Joyal \cite[Sec.~7.1]{Joyal} --- multisort
species have the structure of a {\it symmetric monoidal category}.
Now, in the focus of \cite{MennAA} there are {\it simple commutative monoids} in
a given symmetric monoidal category. Menni defines an exponential
principle in this set-up, and he proves this principle to hold for a
large family of symmetric monoidal categories (see
\cite[Prop.~1.4]{MennAA}). 
He shows that this provides a uniform framework for
the exponential principle for numerous variations of species that
had appeared earlier in the literature, including multisort
species (see \cite[Ex.~3.2 and Ex.~3.5 with 
$I=1+1+\dots+1$]{MennAA}).\footnote{Strictly speaking, weights are not discussed
in \cite{MennAA}. However, it would not be difficult to include them
in the theory developed in \cite{MennAA}.} 
However, as it turns out, in the case
of multisort species, Menni's theory does not cover the setting of our
paper. It does apply whenever the considered multisort species,
together with the product induced by our composition operator, forms a
simple commutative monoid. This does not need to be the case, as 
Example~\ref{ex:3} in Section~\ref{sec:illust} shows (see also
Section~\ref{sec:commeta} for more detailed elaboration on these
matters). So, one could say that Menni's theory exhibits the
structural essentials of the exponential principle in a wide
categorical framework, whereas our paper presents a ``minimalistic"
axiomatic setting for the exponential principle that is specific for
multisort species but, as a bonus, includes a wider set of examples 
than Menni's theory does (in the case of multisort species). It is
conceivable that our setting can be adapted to work for some other
kinds of species, but it is unlikely that it can be lifted to the level of
generality of Menni's theory.


In the next section, we develop the general set-up for our theory.
It is formulated within the theory of multisort species, for which we
define certain composition operators $\Eta$ that are subject to
a single axiom, which, in order to be consistent with \cite{DM}, 
we call (D1). 
Furthermore, in the same section,
we present our main results. These are two exponential formulae, 
see Theorems~\ref{Thm:MainThm1} and \ref{Thm:MainThm2}. 
Theorem~\ref{Thm:MainThm2} refines Theorem~\ref{Thm:MainThm1} by
introducing another variable whose powers keep track of the number of
``components." The proof of Theorem~\ref{Thm:MainThm2} 
requires two general facts about our composition operators $\Eta$,
which are presented in Propositions~\ref{lem:F->Fk} and
\ref{prop:Disjoint}. (It is the latter, which, in the less general
context of \cite{DM}, had been assumed as a separate axiom, (D2). As our
proof of Proposition~\ref{prop:Disjoint} shows, 
this was actually not necessary since, within the general
framework, (D2) follows from (D1). It is interesting to note that,
in the context of \cite{MennAA}, Menni also observed that the axiom
(D2) was not necessary; see \cite[Ex.~3.5]{MennAA}. 
Our derivation of (D2) from (D1) provides the
reason why this is the case: we show that (D2) is implied by --- as we call it
--- ``$m$-permutability" of a species together with a composition
operator; see the proof of Proposition~\ref{prop:Disjoint}, and
Lemmas~\ref{lem:assoc} and \ref{lem:Feta-assoc}.
This ``$m$-permutability" comes for free in the context of
\cite{MennAA} since the underlying species is assumed to form a
commutative monoid in the category of species and, thus, satisfies stronger forms
of ``permutatility;" see also the more detailed explanations at the
beginning of Section~\ref{sec:commeta}, and in particular
the paragraph containing \eqref{eq:passoc} and \eqref{eq:pcomm}.) 
The proofs of
Theorems~\ref{Thm:MainThm1} and \ref{Thm:MainThm2}, and of
Propositions~\ref{lem:F->Fk} and \ref{prop:Disjoint}, are given in
Sections~\ref{sec:main1} and \ref{sec:main2}, respectively. They
require a number of auxiliary results, which are established in
Sections~\ref{sec:aux1} and \ref{sec:aux2}, respectively.

Sections~\ref{sec:illust} and \ref{Sec:CMT1} offer illustrations for the
theory developed in Sections~\ref{Sec:MainThm}--\ref{sec:main2}.
Section~\ref{sec:illust} presents three simple examples highlighting
different aspects of combinatorial situations covered by
Theorems~\ref{Thm:MainThm1} and \ref{Thm:MainThm2}. In
Section~\ref{Sec:CMT1}, we show how to apply our results to obtain
generating function identities for (semi-)magic squares, thereby
generalising previous results in the literature. 

The final section, Section~\ref{sec:commeta}, discusses the
relationship between Menni's theory in \cite{MennAA} and ours.
Moreover, there we make Menni's characterisation (see 
\cite[Sec.~2.6]{MennAA}) of the simple commutative monoids 
that are covered by his result \cite[Prop.~1.4]{MennAA} 
on the (general) exponential principle 
explicit for the special case of multisort species.
As we show, this provides, in the language of our paper, a
characterisation of composition operators that are pointwise
associative and commutative (see \eqref{eq:passoc} and
\eqref{eq:pcomm}). Menni's characterisation implies that
a family $\mathcal F$ of labelled combinatorial objects equipped with
such a composition operator can be
re-constructed, in a sense made precise in Theorem~\ref{thm:commeta},
from the standard operation of forming the disjoint 
union in $E(\mathcal G)$
(the species of sets of objects from $\mathcal G$), 
where $\mathcal G$ denotes again the family
of indecomposable objects in $\mathcal F$. We conclude our
paper by ``twisting" this construction (see Theorem~\ref{thm:Etwist}), 
thereby obtaining a large family of examples that fit under our theory 
but not under Menni's.


\section{Set-up and main results}
\label{Sec:MainThm} 

Denote by $\widehat{\bf Set}$ the
category of finite sets and injective mappings, and by
{\bf Set} the subcategory consisting of finite sets and
bijective maps. Moreover, for a positive integer $r$,
let $\mathfrak{D}_r$ be the full subcategory of
${\bf Set}^r\times{\bf Set}^r$ whose objects are given by
\[
\Obj(\mathfrak{D}_r) = \Big\{(\O_1,\O_2)\in \Obj({\bf Set}^r
\times{\bf Set}^r):\,\, \O_1 \cap \O_2  =
\pmb{\emptyset}\Big\},
\]
where $\pmb{\emptyset}=(\emptyset,\ldots,\emptyset)$
is the element of $\Obj({\bf Set}^r)$ all of whose components are
empty, and intersection is componentwise. 

The ingredients needed for our theory are 
{\it $r$-sort species} and certain
{\it composition operators} defined on them.
Recall from \cite{Joyal} (or see \cite[Def.~4 on p.~102]{BLL} for a
definition avoiding the language of category theory) 
that, for a positive integer $r$, an $r$-sort species
is a covariant functor $F: {\bf Set}^r\rightarrow{\bf Set}$.
Given $r$ and an $r$-sort species $F$, the composition operators we
have in mind are certain natural transformations $\Eta$ from the
functor\footnote{The introduction of the category $\mathfrak{D}_r$
corrects a slight imprecision in the set-up of \cite{DM}.}
\[
F\times F:\, \mathfrak{D}_r \overset{(F,F)}
{\longrightarrow} {\bf Set} \times {\bf Set}
\overset{\times}{\longrightarrow} {\bf Set}
\overset{\iota}{\longrightarrow} \widehat{\bf Set}
\]
to the functor
\[
F\circ \amalg:\, \mathfrak{D}_r \overset{\amalg}
{\longrightarrow}{\bf Set}^r \overset{F}{\longrightarrow}
{\bf Set} \overset{\iota}{\longrightarrow}
 \widehat{\bf Set};
\]
that is, families $\Eta=(\eta_{(\O_1,\O_2)})_{(\O_1,\O_2)
\in\Obj(\mathfrak{D}_r)}$ of injective maps,
\begin{equation} \label{eq:Eta}
\eta_{(\O_1,\O_2)}: F[\O_1] \times F[\O_2] \hookrightarrow
F[\O_1 \amalg \O_2],
\end{equation}
such that, for every morphism
$\ff:(\O_1,\O_2)\longrightarrow
(\widetilde{\O}_1,\widetilde{\O}_2)$ of $\mathfrak{D}_r$,
the diagram
\begin{equation}
\label{Eq:NatDiag}
\begin{CD}
F[\O_1]\times F[\O_2] @>\eta_{(\O_1,\O_2)}>>F[\O_1\amalg
\O_2]\\
@V{(F\times F)[\ff]}VV
@VV{(F\circ\amalg)[\ff]}V\\
F[\widetilde{\O}_1]\times F[\widetilde{\O}_2]
@>>{\eta_{(\widetilde{\O}_1,\widetilde{\O}_2)}}>
F[\widetilde{\O}_1 \amalg \widetilde{\O}_2]
\end{CD}
\end{equation}
commutes. Here, $\times$ is the natural product (Cartesian
product) in the category of sets, $\amalg$ is the natural
coproduct (componentwise disjoint union) in the category
${\bf Set}^r$ and in the category ${\bf Set}$ (relying
on the context to clarify the intended meaning), and
$\iota: {\bf Set}\rightarrow\widehat{\bf Set}$ is the
inclusion functor. In what follows, the set-theoretic
operations $\cap, \cup, -$ as well as the inclusion
relation $\subseteq$ and $\vert$ (restriction of morphisms)
in ${\bf Set}^r$ are all understood to be 
componentwise.\footnote{Throughout this paper, we use the symbol $-$ to denote
the difference of sets.}
We shall most of the time drop the indices of $\Eta$-maps when they are
clear from the context, thus writing $\eta(F[\O_1]\times F[\O_2])$
instead of 
$\eta_{(\O_1,\O_2)}(F[\O_1]\times F[\O_2])$, for example.
We shall think of the elements of a set $\eta(F[\O_1]\times F[\O_2])$
as {\it composite objects} within $F[\O_1\amalg\O_2]$.

Given an $r$-sort species $F$ and a composition operator $\Eta$ as above, 
the next step is to identify the subset $F_\Eta[\O]$ of 
{\it``indecomposable" elements} of a set $F[\O]$. 
It is most natural to define
$F_\Eta: \Obj({\bf Set}^r) \rightarrow \Obj({\bf Set})$ 
via
\[
F_\Eta[\O]:= \begin{cases}
                F[\O]\,-\,\underset{\II \neq
\pmb{\emptyset} \neq \JJ}
                {\underset{\II\amalg
                \JJ=\O}
                {\bigcup\limits_{(\II,
                \JJ)\in
                \Obj(\mathfrak{D}_r)}}}\hspace{.8mm}
                \eta\big(F[\II]\times
F[\JJ]\big),&\O\neq\pmb{\emptyset}\\[2mm]
                \emptyset,&\O=\pmb{\emptyset}
             \end{cases}, \qquad \O\in\Obj({\bf Set}^r).
\]
At this point, $F_\Eta$ is just defined as a {\it map} from 
$\Obj({\bf Set}^r)$ to $\Obj({\bf Set})$. In Lemma~\ref{lem:funct} 
in Section~\ref{sec:aux1} we
shall show that $F_\Eta$ is in fact a functor, that is, an $r$-sort species.

Not every natural transformation $\Eta$
is suited for giving rise to an exponential principle. 
We present the single axiom which is needed for this purpose next.
Given $F$, we call a natural transformation
$\Eta: F\times F\rightarrow F\circ\amalg$ a
\textit{composition operator} of $F$, if Axiom~(D1) below holds.

\medskip
(D1) {\it For each $\O\in\Obj({\bf Set}^r)$ and any two
partitions
$(\O_1,\O_2),(\widetilde{\O}_1,
\widetilde{\O}_2)\in\Obj(\mathfrak{D}_r)$
of $\O$ into disjoint parts,
\[
\O_1 \amalg \O_2 = \O =
\widetilde{\O}_1 \amalg \widetilde{\O}_2,
\]
we have that}
\begin{equation}
\label{Eq:DecompAx} 
\eta\big(F[\O_1] \times F[\O_2]\big)
\cap \eta\big(F[\widetilde{\O}_1] \times
F[\widetilde{\O}_2]\big) = \eta\big(\eta\big(F[\O_{11}]
\times F[\O_{12}]\big) \times \eta\big(F[\O_{21}]
\times F[\O_{22}]\big)\big),
\end{equation}
\textit{where $\O_{ij}:= \O_i \cap \widetilde{\O}_j$ for
$i,j\in\{1,2\}$.}

\medskip
An $r$-sort species $F$ will be
called \textit{decomposable}, if $F\neq\emptyset$
(that is, $F[\O]\neq\emptyset$ for some
$\O\in\Obj({\bf Set}^r)$, and if $F$ admits some
composition operator $\Eta$. 

Next, we define {\it weights} on $(F,\Eta)$. 
Fix a commutative ring $\Lambda$ which contains the rational numbers.
A family $\ww=(w_{\O})_{\O\in\Obj({\bf Set}^r)}$ of maps
$w_{\O}: F[\O]\rightarrow \Lambda$
is termed a \textit{$\Lambda$-weight} on $(F,\Eta)$, if
the following three conditions hold:

\medskip
(W0) {\it For all $x\in F[\pmb\emptyset]$, we have
$w_{\pmb{\emptyset}}(x)=1$.}

\smallskip
(W1) \textit{For each morphism
$\ff: \O_1\rightarrow \O_2$ of ${\bf Set}^r,$
the diagram}
\begin{equation*}
\begin{CD}
F[\O_1] @>w_{\O_1}>> \Lambda\\
@V{F[\ff]}VV @VV{\mathrm{id}_\Lambda}V\\
F[\O_2] @>>w_{\O_2}> \Lambda
\end{CD}
\end{equation*}
\textit{commutes.}

\smallskip
(W2) \textit{For each
pair $(\O_1,\O_2)\in\Obj(\mathfrak{D}_r),$ the diagram}
\begin{equation*}
\begin{CD}
F_\eta[\O_1]\times F[\O_2]
@>\eta_{(\O_1,\O_2)}\vert_{F_\Eta[\O_1]\times F[\O_2]}>>
F[\O_1\amalg\O_2]\\
@V{w_{\O_1}\vert_{F_\Eta[\O_1]}\times w_{\O_2}}VV
@VV{w_{\O_1\amalg\O_2}}V\\
\Lambda\times\Lambda @>>\mbox{\scriptsize multiplication in
$\Lambda$}>\Lambda
\end{CD}
\end{equation*}
\textit{commutes.}

\medskip
Here, (W0) and (W1) make $F$ a {\it weighted\/} $r$-sort species
(cf.\ \cite[p.~104]{BLL}),
whereas (W2) demands (in a weak form) 
the $\Lambda$-weight $\ww$ to be compatible with the
composition operator $\Eta$.%
\footnote{\label{foot1}%
As already remarked in the introduction, it would be easy to
generalise our set-up to cover weighted multisort species in full
generality, by relaxing the condition that $F[\O]$ needs to be finite,
and requiring instead that each preimage $w^{-1}_\O(\lambda)$ is
finite and that the ring $\Lambda$ is {\it multiplicatively finite},
in the sense that the number of different 
product representations of $\lambda\in \Lambda$ is always finite.}
In Section~\ref{sec:commeta}, we shall also need the concept of a {\it weak
$\Lambda$-weight}, by which we mean a collection 
$\ww=(w_\O)_{\O\in\Obj(\mathbf{Set}^r)}$ of mappings as above 
satisfying (W0) and (W1), but not necessarily (W2).

Given a $\Lambda$-weight 
$\ww=(w_{\O})_{\O\in\Obj(\mathbf{Set}^r)}$
on $(F,\Eta)$, we define the corresponding {\it 
exponential generating functions}
for $F$ and $F_\Eta$, respectively, by\footnote{For a non-negative integer
$n$, we write $[n]$ for
the standard set $\{1,2,\ldots,n\}$ of cardinality $n$.}
\begin{align*}
\GF_F(z_1,\ldots,z_r) &:=
\sum_{n_1,\ldots,n_r\geq0}\ \sum_{x\in F[([n_1],\dots,[n_r])]}
w_{([n_1],\dots,[n_r])}(x) 
\frac { z_1^{n_1} \cdots
z_r^{n_r}} {n_1!\cdots n_r!},\\
\GF_{F_\Eta}(z_1,\ldots,z_r) &:=
\sum_{n_1,\ldots,n_r\geq0}\ \sum_{x\in F_\Eta[([n_1],\dots,[n_r])]}
w_{([n_1],\dots,[n_r])}(x)
\frac {z_1^{n_1} \cdots
z_r^{n_r}} {n_1!\cdots n_r!},
\end{align*}
where we suppress the dependence on $\ww$ in the notation for better
readability. 

We are now ready to state our first main result, an
\textit{exponential principle}, which generalises 
Part~(a) of the main result in \cite{DM}.

\begin{theorem} \label{Thm:MainThm1} 
Let $r$ be a positive integer,
$F: {\bf Set}^r \rightarrow {\bf Set}$ an $r$-sort species,
and let $\Eta: F\times F\rightarrow F\circ\amalg$ be a
natural transformation. If $F$ is decomposable and $\Eta$
is a composition operator for $F,$ then the generating functions
$\GF_F$ and $\GF_{F_\Eta}$ are connected via the
relation
\begin{equation} \label{Eq:MainThm1} 
\GF_F(z_1,\ldots,z_r) =
\exp\big(\GF_{F_\Eta}(z_1,\ldots,z_r)\big).
\end{equation}
\end{theorem}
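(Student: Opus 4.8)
The plan is to upgrade the single axiom (D1) into an \emph{intrinsic unique factorisation}: every element of $F[\O]$ with $\O\neq\pmb\emptyset$ is, in an essentially unique way, an iterated $\Eta$-composite of indecomposable pieces, one on each block of a set partition of $\O$. Granting this, \eqref{Eq:MainThm1} becomes bookkeeping: expanding $\exp(\GF_{F_\Eta})=\sum_{k\ge0}\frac1{k!}\GF_{F_\Eta}^k$ and reading off the coefficient of $z_1^{n_1}\cdots z_r^{n_r}/(n_1!\cdots n_r!)$ produces, by the multisort multinomial expansion, the sum $\sum_\pi\prod_{B\in\pi}\big(\sum_{y\in F_\Eta[B]}w(y)\big)$ over all set partitions $\pi$ of $\O=([n_1],\dots,[n_r])$ into nonempty blocks $B$ (each an $r$-tuple of subsets, with $F_\Eta[B]$ transported to standard sets via the functoriality of $F_\Eta$ from Lemma~\ref{lem:funct} and (W1)). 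The factorisation bijection identifies this with $\sum_{x\in F[\O]}w(x)$, the coefficient on the left of \eqref{Eq:MainThm1}, \emph{provided} weights multiply across a factorisation --- which is exactly what (W2) delivers, applied repeatedly by peeling off one indecomposable block at a time.

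First I would dispose of the empty object. Injectivity of $\eta_{(\pmb\emptyset,\pmb\emptyset)}:F[\pmb\emptyset]\times F[\pmb\emptyset]\hookrightarrow F[\pmb\emptyset]$ forces $|F[\pmb\emptyset]|\le1$, and I would record that $F[\pmb\emptyset]$ is in fact a single point (so that both sides of \eqref{Eq:MainThm1} have constant term $1$). With $|F[\pmb\emptyset]|=1$, each map $\eta_{(\pmb\emptyset,X)}$ and $\eta_{(X,\pmb\emptyset)}$, being injective between sets of equal size, is a bijection onto $F[X]$, so $\eta(F[\pmb\emptyset]\times F[X])=F[X]=\eta(F[X]\times F[\pmb\emptyset])$ as subsets.

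The heart of the matter is to extract, from (D1) alone, enough algebra to make ``composite over a partition'' well defined and to pin down a canonical finest partition. The key observation is that applying \eqref{Eq:DecompAx} to one and the same pair of cuts, but in the two possible orders, forces an equality. Taking the cuts $(A\amalg B,\,C)$ and $(A,\,B\amalg C)$ of $A\amalg B\amalg C$, the right-hand side of \eqref{Eq:DecompAx} collapses (using the unit computation above for the empty refinement block) to $\eta(\eta(F[A]\times F[B])\times F[C])$ in one order and to $\eta(F[A]\times\eta(F[B]\times F[C]))$ in the other; since both equal the common intersection on the left of \eqref{Eq:DecompAx}, the two iterated images coincide. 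This is \emph{associativity} of $\Eta$ at the level of set-images; taking instead the cuts $(A,B)$ and $(B,A)$ yields \emph{commutativity}, $\eta(F[A]\times F[B])=\eta(F[B]\times F[A])$. Consequently, for any set partition $\pi$ of $\O$ into nonempty blocks the iterated image $\eta_\pi\big(\prod_{B\in\pi}F[B]\big)\subseteq F[\O]$ is independent of the bracketing and ordering chosen, and I may call $\pi$ \emph{compatible with} $x$ when $x$ lies in it.

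With these tools I would establish unique factorisation. Axiom~(D1) in its raw form \eqref{Eq:DecompAx} says precisely that the meet (common refinement) of two compatible \emph{binary} cuts is again compatible, the four intersection blocks $\O_{ij}$ being exactly the blocks of the meet; bootstrapping this through associativity shows that the compatible partitions of $x$ are closed under meet and contain $\{\O\}$, hence possess a unique finest element $\pi(x)$. Every block of $\pi(x)$ must be indecomposable --- a decomposable block could be split, producing a strictly finer compatible partition --- so $x=\eta_{\pi(x)}\big((y_B)_{B\in\pi(x)}\big)$ with each $y_B\in F_\Eta[B]$; injectivity of the iterated $\eta$-maps makes the factors $(y_B)$ unique, and any other factorisation of $x$ into indecomposables is a compatible partition refined by $\pi(x)$, which, its blocks being indecomposable, must coincide with $\pi(x)$. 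This yields the weight-preserving bijection $F[\O]\cong\coprod_{\pi}\prod_{B\in\pi}F_\Eta[B]$, and \eqref{Eq:MainThm1} follows as indicated. I expect the main obstacle to be exactly this passage from the binary statement (D1) to the full unordered unique factorisation: wringing associativity, commutativity and meet-closure out of the single axiom, and handling the empty-object normalisation cleanly, is where the real work lies; once unique factorisation is in hand, the generating-function identity is routine.
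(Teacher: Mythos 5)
Your route is genuinely different from the paper's: you aim for a global unique factorisation of each $x\in F[\O]$ over a canonical finest ``compatible'' set partition, and then read off \eqref{Eq:MainThm1} from the identity $w(F[\O])=\sum_\pi\prod_{B\in\pi}w(F_\Eta[B])$. The paper instead uses a \emph{pointed} decomposition: it fixes a base point $(\omega,\rho)\in\O$ and proves only that $F[\O]$ is the disjoint union of $\eta(F_\Eta[\O_1]\times F[\O-\O_1])$ over the $\O_1$ containing the base point (Lemmas~\ref{lem:Fzerl} and \ref{lem:Fdisj}); this yields the recursion $n_\rho\, w(F[\O])=\sum\mu_\rho\binom{n_i}{\mu_i}\cdots$, hence the differential equations $\partial_\rho\GF_F=(\partial_\rho\GF_{F_\Eta})\GF_F$, hence \eqref{Eq:MainThm1}. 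That argument needs only commutativity, $3$-associativity and Corollary~\ref{cor:eta3}; it deliberately sidesteps the full unordered factorisation you are after.

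The genuine gap in your proposal is the sentence ``Consequently, for any set partition $\pi$ \dots the iterated image \dots is independent of the bracketing and ordering chosen.'' This does \emph{not} follow from $3$-associativity and commutativity by the usual algebraic induction, because all your identities are equalities of \emph{images of injective maps}, not pointwise identities (and pointwise associativity genuinely fails here, cf.\ Example~\ref{ex:3}). Knowing $\eta\big(\eta(F[A]\times F[B])\times F[C]\big)=\eta\big(F[A]\times\eta(F[B]\times F[C])\big)$ as sets does not allow you to substitute a proper subset $X\subsetneq F[A]$ and conclude $\eta\big(\eta(X\times F[B])\times F[C]\big)=\eta\big(X\times\eta(F[B]\times F[C])\big)$: the two composite bijections onto the common image need not coincide. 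This is precisely why the paper's $4$-permutability (Lemma~\ref{lem:assoc4}) requires a fresh application of (D1) to the crossing partition $(\O_1\amalg\O_2)\amalg(\O_3\amalg\O_4)=(\O_1\amalg\O_3)\amalg(\O_2\amalg\O_4)$ followed by a containment-plus-cardinality argument, and why the general case (Lemma~\ref{lem:assoc}) needs a careful induction through the intersection formula \eqref{eq:mbrack}. The same caveat applies to your ``bootstrapping'' of meet-closure of compatible partitions, and to your weight computation, which must use a bracketing that peels indecomposables off from the left (since (W2) only multiplies when the first factor lies in $F_\Eta$) together with the $F_\Eta$-version of permutability (Lemma~\ref{lem:Feta-assoc}). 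With that machinery supplied, your factorisation argument can be completed --- indeed it would also give a structural proof of Proposition~\ref{prop:Disjoint} --- but as written the central permutability claim is asserted rather than proved, and it is exactly the step where the paper has to work hardest.
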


The proof of Theorem~\ref{Thm:MainThm1} is given in Section~\ref{sec:main1}.
It requires several preparatory results, which are established
in the next section.

\medskip
In analogy to \cite{DM}, there is a refinement of
Theorem~\ref{Thm:MainThm1} in the spirit of \cite{LaLeAZ,ScSoAA}, 
which we explain next.
Making use of the map $F_\Eta$ defined above, we define a sequence 
of mappings
\[
F_\Eta^{(k)}: \Obj({\bf Set}^r)\rightarrow\Obj({\bf Set}),\quad k\geq0,
\]
with the property that $F_\Eta^{(k)}[\O]\subseteq F[\O]$ by
induction on $k$ via
\[
F_\Eta^{(0)}[\O]:= \begin{cases}
                     F[\pmb{\emptyset}],&\O=\pmb{\emptyset}\\[1mm]
                     \emptyset,&\O\neq\pmb{\emptyset}
                  \end{cases}
\]
and
\begin{equation} \label{eq:Fetakdef}
F_\Eta^{(k)}[\O]:=\underset{\O_1\subseteq\O}{\bigcup_{\O_1\in\Obj({\bf
Set}^r)}}\hspace{.8mm}\eta\big(F_\Eta[\O_1]\times
F_\Eta^{(k-1)}[\O - \O_1]\big),\quad k\geq1.
\end{equation}
As the definition suggests, one should think of $F_\Eta^{(k)}[\O]$ as 
the subset of 
objects in $F[\O]$ consisting of exactly $k$ ``indecomposable" elements
(components).

An immediate induction on
\begin{equation} \label{eq:omabs} 
\|\O\| =
\|(\Omega^{(1)},\Omega^{(2)},\ldots,\Omega^{(r)})\|:=\sum_{j=1} ^{r} 
|\Omega^{(j)}|
\end{equation}
shows that
\begin{equation}
\label{Eq:FetakBoundary} F_\Eta^{(k)}[\O] = \emptyset,\quad
k>\|\O\|.
\end{equation}

Again, by definition, $F_\Eta^{(k)}$ is just a {\it map} from 
$\Obj({\bf Set}^r)$ to $\Obj({\bf Set})$. In Lemma~\ref{lem:kfunct} 
in Section~\ref{sec:aux2} we
shall show that $F_\Eta^{(k)}$ is in fact a functor, that is, an
$r$-sort species. 

It is not difficult to see that, for any $\O\in \Obj(\mathbf{Set}^r)$,
the sets 
$F_\Eta^{(k)}[\O]$ with $k=0,1,2,\dots$ cover all of $F[\O]$.

\begin{proposition} \label{lem:F->Fk}
For every $\O\in\Obj({\bf Set}^r)$, we have
\begin{equation}
\label{Eq:FOmegaFilt} F[\O] = \bigcup_{k\geq0} F_\Eta^{(k)}[\O].
\end{equation}
\end{proposition}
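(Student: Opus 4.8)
The plan is to prove the two inclusions in \eqref{Eq:FOmegaFilt} separately, the non-trivial one by strong induction on the total size $\|\O\|$ from \eqref{eq:omabs}. The inclusion $\bigcup_{k\ge0}F_\Eta^{(k)}[\O]\subseteq F[\O]$ is immediate from the property $F_\Eta^{(k)}[\O]\subseteq F[\O]$ recorded right after \eqref{eq:Fetakdef} (a one-line induction on $k$, since every $\eta$-map lands in $F[\O]$). So the whole content is the reverse inclusion $F[\O]\subseteq\bigcup_{k\ge0}F_\Eta^{(k)}[\O]$. For the base case $\O=\pmb{\emptyset}$ there is nothing to do, as $F[\pmb{\emptyset}]=F_\Eta^{(0)}[\pmb{\emptyset}]$ by definition. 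For the inductive step I fix $\O\ne\pmb{\emptyset}$ and $x\in F[\O]$, and split according to whether $x$ is indecomposable.

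Before handling the two cases I would isolate a preliminary fact about the empty structure. In the union \eqref{eq:Fetakdef} defining $F_\Eta^{(1)}[\O]$ only the term $\O_1=\O$ survives, since $F_\Eta^{(0)}[\O-\O_1]=\emptyset$ unless $\O_1=\O$; hence $F_\Eta^{(1)}[\O]=\eta\big(F_\Eta[\O]\times F[\pmb{\emptyset}]\big)$. Now injectivity of $\eta_{(\O,\pmb\emptyset)}$ in \eqref{eq:Eta} forces $|F[\O]|\cdot|F[\pmb{\emptyset}]|\le|F[\O]|$, so $F[\pmb{\emptyset}]$ is a single (neutral) element $\{e\}$, and $x\mapsto\eta_{(\O,\pmb\emptyset)}(x,e)$ is a bijection of $F[\O]$. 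Using \eqref{Eq:DecompAx} with the two partitions $(\O,\pmb{\emptyset})$ and $(A,B)$ of $\O$, one checks that this bijection maps indecomposables to indecomposables; since $F_\Eta[\O]$ is finite, it restricts to a bijection of $F_\Eta[\O]$, giving $F_\Eta^{(1)}[\O]=F_\Eta[\O]$. This settles the indecomposable case: if $x\in F_\Eta[\O]$ then $x\in F_\Eta^{(1)}[\O]$.

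Suppose instead $x\notin F_\Eta[\O]$. By the definition of $F_\Eta$ there is a partition $\O=\II\amalg\JJ$ with $\II,\JJ\ne\pmb{\emptyset}$ and $x=\eta(u,v)$ for some $u\in F[\II]$, $v\in F[\JJ]$. Since $\|\II\|<\|\O\|$, the induction hypothesis gives $u\in F_\Eta^{(m)}[\II]$ for some $m$, and $m\ge1$ because $\II\ne\pmb{\emptyset}$ forces $u\notin F_\Eta^{(0)}[\II]=\emptyset$. By \eqref{eq:Fetakdef} I may therefore peel off an indecomposable left factor, $u=\eta(g,u')$ with $g\in F_\Eta[\II_1]$ and $u'\in F_\Eta^{(m-1)}[\II-\II_1]$; here $\II_1\ne\pmb{\emptyset}$ since $F_\Eta[\pmb{\emptyset}]=\emptyset$. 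Re-associating,
\[
x=\eta\big(\eta(g,u'),v\big)=\eta\big(g,\eta(u',v)\big),
\]
I set $w:=\eta(u',v)\in F[\O-\II_1]$. As $\|\O-\II_1\|<\|\O\|$, the induction hypothesis yields $w\in F_\Eta^{(p)}[\O-\II_1]$ for some $p$, whence
\[
x=\eta(g,w)\in\eta\big(F_\Eta[\II_1]\times F_\Eta^{(p)}[\O-\II_1]\big)\subseteq F_\Eta^{(p+1)}[\O],
\]
again by \eqref{eq:Fetakdef}. This closes the induction.

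The main obstacle is the re-association step $\eta(\eta(g,u'),v)=\eta(g,\eta(u',v))$ for elements supported on the three pairwise disjoint blocks $\II_1$, $\II-\II_1$ and $\JJ$. This associativity of $\Eta$ is not postulated; it must be extracted from the single axiom~(D1), and it is precisely the ``$m$-permutability'' phenomenon flagged in the introduction, which I would establish beforehand by applying \eqref{Eq:DecompAx} to the overlapping two-block partitions of $\II_1\amalg(\II-\II_1)\amalg\JJ$. The only other delicate point is the analysis of the empty structure in the second paragraph (singleton-ness of $F[\pmb{\emptyset}]$ and the indecomposability-preservation of composition with $e$); both facts are soft consequences of the injectivity in \eqref{eq:Eta} together with (D1), but they are exactly where the real work lies, the rest being bookkeeping on the size $\|\O\|$.
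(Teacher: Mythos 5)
Your reduction to the inclusion $F[\O]\subseteq\bigcup_{k\ge0}F_\Eta^{(k)}[\O]$, the treatment of $F[\pmb\emptyset]$, and the identification $F_\Eta^{(1)}[\O]=F_\Eta[\O]$ are all fine (the last is the paper's Lemma~\ref{lem:Feta1}, proved essentially as you indicate). The gap is the re-association step
\[
\eta\big(\eta(g,u'),v\big)=\eta\big(g,\eta(u',v)\big),
\]
which is a \emph{pointwise} associativity claim about individual elements. This does not follow from Axiom~(D1), and it is in general false: the composition operator of Example~\ref{ex:3} satisfies (D1) but violates \eqref{eq:passoc}, as the paper notes explicitly. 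What (D1) yields (Lemma~\ref{lem:assoc3} and its generalisations) is only the \emph{set-level} identity $\eta\big(\eta(F[\O_1]\times F[\O_2])\times F[\O_3]\big)=\eta\big(F[\O_1]\times\eta(F[\O_2]\times F[\O_3])\big)$: the two injections have the same image on the full Cartesian product, but they may be different maps, so the image of a subset such as $F_\Eta[\II_1]\times F_\Eta^{(m-1)}[\II-\II_1]\times F[\JJ]$ under one bracketing need not equal its image under the other, and an individual $x=\eta(\eta(g,u'),v)$ need not be expressible as $\eta(g,w)$ with the \emph{same} indecomposable $g$ in the first slot. Your closing remark that the obstacle is handled ``by applying \eqref{Eq:DecompAx} to the overlapping two-block partitions'' only produces this set-level permutability for full $F$-sets, not the pointwise statement your displayed computation uses.

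The step can be repaired, but at real cost: one must expand $F_\Eta^{(m-1)}[\II-\II_1]$ and (inductively) $F[\JJ]$ into $\Eta$-bracketings of $F_\Eta$-sets and invoke the $m$-permutability for $(F_\Eta,\Eta)$ (Lemma~\ref{lem:Feta-assoc}), a set-level statement whose proof requires the inclusion--exclusion argument of Section~\ref{sec:aux2} and is developed in the paper only for Proposition~\ref{prop:Disjoint}. The paper's own proof of Proposition~\ref{lem:F->Fk} sidesteps all of this: it is a purely set-level induction on $\|\O\|$ resting on Lemma~\ref{lem:Fzerl}, whose minimal-norm (base-point) argument extracts an indecomposable first factor without ever re-associating individual elements. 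You should either adopt that route or state and prove the set-level permutability for $F_\Eta$-bracketings before using it.
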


The proof of Proposition~\ref{lem:F->Fk} can be found 
in Section~\ref{sec:main2}.
 
In \cite{DM}, a second axiom, (D2), 
was imposed on the composition operators
$\Eta$ for obtaining a refined exponential principle that takes into
account the filtration given by the sets $F_\Eta^{(k)}[\O]$ appearing
on the right-hand side of \eqref{Eq:FOmegaFilt} (for the case of
$1$-sort species): it required pairwise disjointness of the sets
$F_\Eta^{(k)}[\O]$ for $k=0,1,2,\dots$. 
The proposition below says that,
actually, this assertion is a consequence of Axiom~(D1) (within the general
set-up), and this is even true for multisort species.

\begin{proposition}
\label{prop:Disjoint}
If $F:\mathrm{\bf Set}^r\rightarrow\mathrm{\bf Set}$ is an $r$-sort species
and $\Eta$ is a composition operator for $F,$ then we have, 
for\footnote{We denote by $\N_0$ the set of non-negative
integers.}
$k,\ell\in\N_0$ and $k\neq \ell,$ 
\begin{equation}
\label{Eq:Disjoint}
F_\Eta^{(k)}[\O] \cap F_\Eta^{(\ell)}[\O] =
\emptyset,\quad\O\in\Obj(\mathrm{\bf Set}^r). 
\end{equation}
\end{proposition}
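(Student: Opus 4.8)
The plan is to prove \eqref{Eq:Disjoint} by induction on $\|\O\|$. For $\O=\pmb{\emptyset}$ the only non-empty set among the $F_\Eta^{(k)}[\pmb{\emptyset}]$ is $F_\Eta^{(0)}[\pmb{\emptyset}]=F[\pmb{\emptyset}]$, since $F_\Eta[\pmb{\emptyset}]=\emptyset$ forces $F_\Eta^{(k)}[\pmb{\emptyset}]=\emptyset$ for all $k\ge1$; so the base case is immediate. For the inductive step, suppose $x\in F_\Eta^{(k)}[\O]\cap F_\Eta^{(\ell)}[\O]$ with $1\le k<\ell$. Unwinding the definition \eqref{eq:Fetakdef}, I would write $x=\eta(y,z)=\eta(y',z')$, where $y\in F_\Eta[\O_1]$, $z\in F_\Eta^{(k-1)}[\O-\O_1]$ and $y'\in F_\Eta[\widetilde{\O}_1]$, $z'\in F_\Eta^{(\ell-1)}[\O-\widetilde{\O}_1]$, necessarily with $\O_1,\widetilde{\O}_1\ne\pmb{\emptyset}$. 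The aim is to peel off a \emph{common} indecomposable first factor, landing in an intersection $F_\Eta^{(k-1)}[\O']\cap F_\Eta^{(\ell-1)}[\O']$ with $\|\O'\|<\|\O\|$, where the inductive hypothesis forces $k-1=\ell-1$, a contradiction.

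The first step is to exploit Axiom~(D1). Applying it to the two two-block partitions $(\O_1,\O-\O_1)$ and $(\widetilde{\O}_1,\O-\widetilde{\O}_1)$ of $\O$, the element $x$, lying in both $\eta$-images, is a fourfold composite over the common refinement $\O_{ij}=\O_i\cap\widetilde{\O}_j$; moreover, running (D1) a second time with the two partitions interchanged shows that the two bracketings of this fourfold composite describe the \emph{same} subset of $F[\O]$, so $x$ admits a representation under each bracketing. Reading the first bracketing as a composite along $(\O_1,\O-\O_1)$ and using injectivity of $\eta_{(\O_1,\O-\O_1)}$ against $x=\eta(y,z)$ identifies $y$ with the composite of the two factors living on $\O_{11}=\O_1\cap\widetilde{\O}_1$ and $\O_{12}=\O_1-\widetilde{\O}_1$; since $y\in F_\Eta[\O_1]$ is indecomposable, one of $\O_{11},\O_{12}$ must equal $\pmb{\emptyset}$. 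The symmetric reading of the other bracketing, together with indecomposability of $y'$, forces one of $\O_{11},\O_{21}$ to be $\pmb{\emptyset}$. Combining the two yields the clean dichotomy: either $\O_1=\widetilde{\O}_1$, or $\O_1\cap\widetilde{\O}_1=\pmb{\emptyset}$.

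In the first alternative, $\O_1=\widetilde{\O}_1$, both decompositions are along the same pair, so injectivity of $\eta_{(\O_1,\O-\O_1)}$ applied to $\eta(y,z)=\eta(y',z')$ gives $z=z'$, whence $z\in F_\Eta^{(k-1)}[\O-\O_1]\cap F_\Eta^{(\ell-1)}[\O-\O_1]$ and the induction closes at once. The disjoint alternative $\O_1\cap\widetilde{\O}_1=\pmb{\emptyset}$ is the substantial case: here $y$ and $y'$ are first factors sitting on disjoint parts, and I must show that the indecomposable $y$ can be peeled off \emph{first} from the $\ell$-component object as well, i.e.\ that $x=\eta(y,\widehat z)$ with $\widehat z\in F_\Eta^{(\ell-1)}[\O-\O_1]$. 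Granting this, injectivity of $\eta_{(\O_1,\O-\O_1)}$ forces $z=\widehat z$, and we are again in the situation $z\in F_\Eta^{(k-1)}[\O-\O_1]\cap F_\Eta^{(\ell-1)}[\O-\O_1]$, closing the induction.

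The main obstacle is exactly this commutation of peelings in the disjoint case: it amounts to an interchange (associativity-plus-commutativity) law for $\Eta$ permitting one to bring the component on block $\O_1$ to the front of an $\ell$-fold decomposition. Since $\Eta$ is \emph{not} assumed associative or commutative a priori, this law must be extracted from (D1) alone; this is precisely the ``$m$-permutability'' isolated in Lemmas~\ref{lem:assoc} and \ref{lem:Feta-assoc}, which I would establish first and then feed into the argument above. A secondary, purely technical, nuisance is the bookkeeping of the unit factors in $F[\pmb{\emptyset}]$ that appear whenever a block of the fourfold composite is $\pmb{\emptyset}$; these are controlled by the naturality square \eqref{Eq:NatDiag} and do not affect the component count.
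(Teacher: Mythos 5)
Your argument is correct in outline but takes a genuinely different route from the paper. The paper proves Proposition~\ref{prop:Disjoint} by a counting argument: it first establishes $\GF_{F_\Eta^{(k)}}=\frac{1}{k!}\big(\GF_{F_\Eta}\big)^k$ via the disjoint base-point decomposition of Lemma~\ref{lem:Fkzerl} and a differential recurrence, and then observes that, together with Theorem~\ref{Thm:MainThm1} and the covering property of Proposition~\ref{lem:F->Fk}, any overlap among the sets $F_\Eta^{(k)}[\O]$ would make the generating functions disagree. The authors explicitly remark that this is a counting proof of a structural fact and that a direct structural argument would be desirable; your induction on $\|\O\|$, peeling off a common indecomposable first factor, is exactly such an argument. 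Your derivation of the dichotomy ($\O_1=\widetilde{\O}_1$ or $\O_1\cap\widetilde{\O}_1=\pmb{\emptyset}$) from (D1), injectivity of the $\Eta$-maps, and indecomposability of $y$ and $y'$ is sound, and the case $\O_1=\widetilde{\O}_1$ closes the induction as you say. Note that both proofs ultimately lean on the same hard ingredient, namely $m$-permutability (Lemmas~\ref{lem:assoc} and \ref{lem:Feta-assoc}): the paper uses it inside the proof of Lemma~\ref{lem:Fkzerl}, while you invoke it directly.

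One step in the disjoint case is under-specified. Permutability lets you bring to the front any block that already occurs in the $\ell$-fold decomposition of $x$; it does not by itself tell you that $\O_1$ is such a block. To fill this in, unfold $z'$ completely using \eqref{eq:Fm}, so that $x$ lies in an $\Eta$-bracketing of $F_\Eta[\widetilde{\O}_1],\dots,F_\Eta[\widetilde{\O}_\ell]$ with $\widetilde{\O}_1\amalg\dots\amalg\widetilde{\O}_\ell=\O$ and all blocks non-empty; by Lemma~\ref{lem:Feta-assoc}, for every $j$ one then has $x\in\eta\big(F_\Eta[\widetilde{\O}_j]\times F[\O-\widetilde{\O}_j]\big)$, and your dichotomy applied to the pair $(\O_1,\widetilde{\O}_j)$ gives $\O_1=\widetilde{\O}_j$ or $\O_1\cap\widetilde{\O}_j=\pmb{\emptyset}$. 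Since $\O_1\neq\pmb{\emptyset}$ and the $\widetilde{\O}_j$ cover $\O$, some $j$ must yield $\O_1=\widetilde{\O}_j$; only then does permutability produce $x=\eta(y,\widehat z)$ with $\widehat z\in F_\Eta^{(\ell-1)}[\O-\O_1]$, after which injectivity of $\eta_{(\O_1,\O-\O_1)}$ closes the induction. With this supplement your proof is complete.
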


Proposition~\ref{prop:Disjoint} is also proved in Section~\ref{sec:main2}.
Its proof depends crucially on the fact that ``$\Eta$-bracketings"
of $F$-sets and $F_\Eta$-sets do not depend on the order of the terms
$F[\O]$ respectively $F_\Eta[\O]$ involved, nor on the type of 
bracketing used; see Lemmas~\ref{lem:assoc} and \ref{lem:Feta-assoc}
in Sections~\ref{sec:aux1} and \ref{sec:aux2}, respectively. 
From a technical point of view, this is the decisive improvement over
the results in \cite{DM}, and it is the reason that the
dependence of Axiom~(D2) from Axiom~(D1) was not observed there.

Given a $\Lambda$-weight $\ww$ on $(F,\Eta)$, the above 
propositions allow us to refine the weighting to
$$\widetilde w_{\O}(x):=y^kw_{\O}(x),\quad \quad x\in F_\Eta^{(k)}[\O].$$

We then can define the refined generating function
\begin{align*}
\widetilde \GF_{F}(z_1,\ldots,z_r,y)&:= \sum_{n_1,\ldots,n_r\geq0}
\sum_{x\in F[([n_1],\dots,[n_r])]}
\widetilde w_{([n_1],\dots,[n_r])}(x)\,
\frac{z_1^{n_1} \cdots z_r^{n_r} } {n_1!\cdots n_r!}\\
&\hphantom{:}=\sum_{n_1,\ldots,n_r\geq0}\ \sum_{k\geq0} 
\sum_{x\in F_\Eta^{(k)}[([n_1],\dots,[n_r])]}
y^k\,w_{([n_1],\dots,[n_r])}(x)\,
\frac{z_1^{n_1} \cdots z_r^{n_r} } {n_1!\cdots n_r!}.
\end{align*}
With the above notation,
we have the following refinement of Theorem~\ref{Thm:MainThm1},
which is our second main result.

\begin{theorem} \label{Thm:MainThm2}
Under the hypotheses of Theorem~{\em \ref{Thm:MainThm1},} we have
\begin{equation}
\label{Eq:MainThm21}
\widetilde \GF_{F}(z_1,\ldots,z_r,y) = 
\exp\big(y\GF_{F_\Eta}(z_1,\ldots,z_r)\big),
\end{equation}
as well as
\begin{equation}
\label{Eq:MainThm22}
\widetilde \GF_{F}(z_1,\ldots,z_r,y) = 
\big(\GF_F(z_1,\ldots,z_r)\big)^y.
\end{equation}
\end{theorem}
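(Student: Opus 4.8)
The plan is to prove Theorem~\ref{Thm:MainThm2} by establishing \eqref{Eq:MainThm21} first and then deriving \eqref{Eq:MainThm22} as a formal consequence. The key structural input is the decomposition of $F[\O]$ furnished by Propositions~\ref{lem:F->Fk} and~\ref{prop:Disjoint}: together they assert that, for each $\O$, the sets $\big(F_\Eta^{(k)}[\O]\big)_{k\geq0}$ form a \emph{partition} of $F[\O]$. Consequently the refined weight $\widetilde w_\O$ is well defined (each $x\in F[\O]$ lies in exactly one $F_\Eta^{(k)}[\O]$, so the exponent $k$ is unambiguous), and the second line in the definition of $\widetilde\GF_F$ is just the reorganisation of the first line according to this partition. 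So the real content is to show that the generating function for $F_\Eta^{(k)}$ is the $k$-th ``power'' term in $\exp\big(y\,\GF_{F_\Eta}\big)$.

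First I would isolate the generating-function counterpart of the inductive definition \eqref{eq:Fetakdef}. Writing $\GF_{F_\Eta^{(k)}}(z_1,\dots,z_r)$ for the exponential generating function of the species $F_\Eta^{(k)}$ (with the unrefined weight $w$), the defining union $F_\Eta^{(k)}[\O]=\bigcup_{\O_1\subseteq\O}\eta\big(F_\Eta[\O_1]\times F_\Eta^{(k-1)}[\O-\O_1]\big)$ should translate into the product recursion
\begin{equation*}
\GF_{F_\Eta^{(k)}}=\GF_{F_\Eta}\cdot\GF_{F_\Eta^{(k-1)}},\qquad k\geq1,
\end{equation*}
with $\GF_{F_\Eta^{(0)}}=1$ (coming from the single empty object in $F[\pmb\emptyset]$, weighted $1$ by (W0)). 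Two points must be checked here. The union over $\O_1\subseteq\O$ must be a \emph{disjoint} union, so that summing weights is legitimate; this disjointness, and the fact that each term $\eta\big(F_\Eta[\O_1]\times F_\Eta^{(k-1)}[\O-\O_1]\big)$ is genuinely a ``labelled product'' whose weight factorises, is exactly where Axiom~(D1) (via its consequences in the cited lemmas) and the compatibility condition (W2) enter. The injectivity of $\eta$ guarantees that the product $F_\Eta[\O_1]\times F_\Eta^{(k-1)}[\O-\O_1]$ maps bijectively onto its image, while (W2) ensures $w_{\O_1\amalg\O_2}\big(\eta(a,b)\big)=w_{\O_1}(a)\,w_{\O_2}(b)$ on the relevant domain, so that under the standard dictionary for exponential generating functions the disjoint union over splittings $\O=\O_1\amalg(\O-\O_1)$ becomes an ordinary product of generating functions.

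Granting the product recursion, induction on $k$ immediately gives $\GF_{F_\Eta^{(k)}}=\big(\GF_{F_\Eta}\big)^k$, and since the refined weight attaches the factor $y^k$ to every object counted by $\GF_{F_\Eta^{(k)}}$, summing over $k$ yields
\begin{equation*}
\widetilde\GF_F=\sum_{k\geq0}y^k\,\GF_{F_\Eta^{(k)}}=\sum_{k\geq0}\frac{\big(y\,\GF_{F_\Eta}\big)^k}{k!}\cdot k!\big/\text{(no }k!\text{)}.
\end{equation*}
Here I must be careful about the factorial normalisation: because $F_\Eta^{(k)}$ is built from $k$ indecomposable blocks that are themselves \emph{unordered}, the correct bookkeeping introduces a $1/k!$, giving $\GF_{F_\Eta^{(k)}}=\frac{1}{k!}\big(\GF_{F_\Eta}\big)^k$ rather than the bare $k$-th power. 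I expect this overcounting-by-$k!$ issue to be the main obstacle: one has to argue that the $k$-fold iterated construction \eqref{eq:Fetakdef}, which peels off components in a linear order, together with the order-independence of $\Eta$-bracketings (Lemmas~\ref{lem:assoc} and \ref{lem:Feta-assoc}), produces each unordered $k$-block configuration exactly $k!$ times, so the true identity is $\widetilde\GF_F=\sum_{k\geq0}\frac{1}{k!}\big(y\,\GF_{F_\Eta}\big)^k=\exp\big(y\,\GF_{F_\Eta}\big)$, which is \eqref{Eq:MainThm21}. Finally, \eqref{Eq:MainThm22} follows purely formally: setting $y=1$ in \eqref{Eq:MainThm21} recovers Theorem~\ref{Thm:MainThm1}, i.e.\ $\GF_F=\exp\big(\GF_{F_\Eta}\big)$, whence $\exp\big(y\,\GF_{F_\Eta}\big)=\big(\exp(\GF_{F_\Eta})\big)^y=\big(\GF_F\big)^y$, completing the proof.
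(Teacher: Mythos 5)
Your overall strategy --- compute the generating function $\GF_{F_\Eta^{(k)}}$ of $F_\Eta^{(k)}$, show it equals $\frac{1}{k!}\big(\GF_{F_\Eta}\big)^k$, and then evaluate $\sum_{k\ge0}y^k\GF_{F_\Eta^{(k)}}$ --- is exactly the paper's, and your derivation of \eqref{Eq:MainThm22} from \eqref{Eq:MainThm21} and Theorem~\ref{Thm:MainThm1} is also the paper's. However, the central identity $\GF_{F_\Eta^{(k)}}=\frac{1}{k!}\big(\GF_{F_\Eta}\big)^k$ is not actually established in your argument, and the route you sketch towards it contains a genuine error. The union over all $\O_1\subseteq\O$ in the definition \eqref{eq:Fetakdef} is \emph{not} disjoint: an element of $F_\Eta^{(k)}[\O]$ built from components supported on $\O_1,\dots,\O_k$ lies in the term indexed by $\O_i$ for \emph{each} $i$, so the ``product recursion'' $\GF_{F_\Eta^{(k)}}=\GF_{F_\Eta}\cdot\GF_{F_\Eta^{(k-1)}}$ you write down is false --- indeed it is inconsistent with the formula $\frac{1}{k!}\big(\GF_{F_\Eta}\big)^k$ you assert immediately afterwards, which satisfies $\GF_{F_\Eta^{(k)}}=\frac{1}{k}\,\GF_{F_\Eta}\,\GF_{F_\Eta^{(k-1)}}$ instead. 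You then acknowledge an ``overcounting by $k!$'' and say one ``has to argue'' that each unordered $k$-block configuration arises exactly $k!$ times, but that is precisely the point requiring proof: it needs both the order-independence of $\Eta$-bracketings of $F_\Eta$-sets (Lemma~\ref{lem:Feta-assoc}) \emph{and} the uniqueness of the unordered decomposition of a given element, and you supply neither. As written, the crucial step is asserted rather than proved.

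The paper resolves exactly this difficulty with a base-point device. Lemma~\ref{lem:Fkzerl} shows that restricting the union in \eqref{eq:Fetakdef} to those $\O_1$ containing a fixed base point $(\omega,\rho)\in\O$ yields a \emph{disjoint} union that still exhausts $F_\Eta^{(k)}[\O]$ (this rests on Lemma~\ref{lem:Fdisj}, the expansion \eqref{eq:Fm}, and Lemma~\ref{lem:Feta-assoc}). Passing to weights, the base-point restriction produces the factor $\mu_\rho/n_\rho$, and the resulting identity is the system of differential equations $\partial\GF_{F_\Eta^{(k)}}/\partial z_\rho=\big(\partial\GF_{F_\Eta}/\partial z_\rho\big)\,\GF_{F_\Eta^{(k-1)}}$, which integrates (using the vanishing of the relevant constant terms) to $\GF_{F_\Eta^{(k)}}=\frac{1}{k!}\big(\GF_{F_\Eta}\big)^k$; the $1/k!$ emerges from the integration rather than from an ad hoc symmetry count. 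To repair your proposal you would either have to adopt this differential argument or make the ordered-versus-unordered count rigorous via \eqref{eq:Fm} together with a uniqueness-of-decomposition statement; some such argument must be supplied.
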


The proof of Theorem~\ref{Thm:MainThm2} 
is given in Section~\ref{sec:main2}, as
a simple consequence of (the proof of) Proposition~\ref{prop:Disjoint}.

\section{Auxiliary results, I}
\label{sec:aux1}

The purpose of this section is to establish several lemmas, which will
be needed in the next section in the proof of
Theorem~\ref{Thm:MainThm1}. At the same time, they also form the basis
for the proofs of the auxiliary results in Section~\ref{sec:aux2},
which eventually will lead to proofs of
Propositions~\ref{lem:F->Fk} and \ref{prop:Disjoint},
and of Theorem~\ref{Thm:MainThm2},
in Section~\ref{sec:main2}. 
In all of this section, we assume that $F$ is a
decomposable $r$-sort species with composition operator $\Eta$.

\begin{lemma} \label{lem:emptyset}
We have $|F[\pmb{\emptyset}]| = 1$.
\end{lemma}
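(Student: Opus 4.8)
The plan is to prove the two inequalities $|F[\pmb{\emptyset}]|\le 1$ and $|F[\pmb{\emptyset}]|\ge 1$ separately: the first by a cardinality count using injectivity of an $\Eta$-map, the second by exploiting Axiom~(D1).

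For the upper bound, I would observe that $(\pmb{\emptyset},\pmb{\emptyset})\in\Obj(\mathfrak{D}_r)$ and $\pmb{\emptyset}\amalg\pmb{\emptyset}=\pmb{\emptyset}$, so the composition operator supplies an injection
\[
\eta_{(\pmb{\emptyset},\pmb{\emptyset})}\colon F[\pmb{\emptyset}]\times F[\pmb{\emptyset}]\hookrightarrow F[\pmb{\emptyset}].
\]
Since $F$ takes values in $\mathbf{Set}$, the set $F[\pmb{\emptyset}]$ is finite, and an injection of $X\times X$ into a finite set $X$ forces $|X|^2\le|X|$, whence $|F[\pmb{\emptyset}]|\le1$.

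For the lower bound I must produce an element of $F[\pmb{\emptyset}]$; this is the step that genuinely uses Axiom~(D1). (Note that the species $E^+$ of nonempty sets satisfies injectivity and naturality of $\Eta$ yet has $F[\pmb{\emptyset}]=\emptyset$, so those properties alone cannot suffice.) Since $F$ is decomposable, I would choose $\O_0$ with $F[\O_0]\neq\emptyset$; if $\O_0=\pmb{\emptyset}$ we are already done, so assume $\O_0\neq\pmb{\emptyset}$. Take two disjoint copies $A,B$ of $\O_0$, so that $(A,B)\in\Obj(\mathfrak{D}_r)$ and, by functoriality of $F$, both $F[A]$ and $F[B]$ are nonempty. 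Put $\O:=A\amalg B$ and apply Axiom~(D1) to the two identical partitions $(\O_1,\O_2)=(\widetilde{\O}_1,\widetilde{\O}_2)=(A,B)$ of $\O$. Computing the pieces $\O_{ij}=\O_i\cap\widetilde{\O}_j$ gives $\O_{11}=A$, $\O_{22}=B$ and $\O_{12}=\O_{21}=\pmb{\emptyset}$, so that \eqref{Eq:DecompAx} becomes
\[
\eta\big(F[A]\times F[B]\big)=\eta\big(\eta(F[A]\times F[\pmb{\emptyset}])\times\eta(F[\pmb{\emptyset}]\times F[B])\big).
\]
The left-hand side is nonempty because $F[A],F[B]\neq\emptyset$ and $\eta$ is injective. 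If $F[\pmb{\emptyset}]$ were empty, the factor $F[A]\times F[\pmb{\emptyset}]$ would be empty, forcing the inner and hence the whole right-hand side to be empty, a contradiction. Therefore $F[\pmb{\emptyset}]\neq\emptyset$.

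Combining the two bounds yields $|F[\pmb{\emptyset}]|=1$. I expect the lower bound to be the main obstacle: the upper bound is immediate from the very definition of a composition operator, whereas non-emptiness requires the right configuration, namely two disjoint copies of a single nonempty instance fed into (D1) through a \emph{repeated} partition, so that the off-diagonal intersections $\O_{12},\O_{21}$ collapse to $\pmb{\emptyset}$ and thereby expose $F[\pmb{\emptyset}]$.
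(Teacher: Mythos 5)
Your proposal is correct and follows essentially the same route as the paper: the upper bound from injectivity of $\eta_{(\pmb{\emptyset},\pmb{\emptyset})}$, and the lower bound by feeding a disjoint copy of a nonempty instance into Axiom~(D1) with the partition repeated, so that the off-diagonal pieces $\O_{12},\O_{21}$ become $\pmb{\emptyset}$ and nonemptiness of $F[\pmb{\emptyset}]$ is forced. The only cosmetic difference is that you spell out the case $\O_0=\pmb{\emptyset}$ and write the conclusion of (D1) as an equality rather than an intersection, which the paper leaves implicit.
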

\begin{proof}
By the injectivity of
$\eta_{(\pmb{\emptyset},\pmb{\emptyset})}:
F[\pmb{\emptyset}]\times F[\pmb{\emptyset}]
\rightarrow F[\pmb{\emptyset}]$, the set
$F[\pmb{\emptyset}]$ is either empty or a $1$-set.
Suppose that $F[\pmb{\emptyset}] = \emptyset$.
Choose $\O_1 = (\Omega_1^{(1)},\ldots,
\Omega_1^{(r)}) \in \Obj({\bf Set}^r)$ with
$F[\O_1]\neq\emptyset$, and $\O_2=
(\Omega_2^{(1)}, \ldots, \Omega_2^{(r)})\in\Obj({\bf Set}^r)$
such that $\O_1 \cap \O_2 = \pmb{\emptyset}$ and
$|\Omega_1^{(i)}| = |\Omega_2^{(i)}|$ for $1\leq i\leq r$.
Now consider (D1) for the partition
$\O:=\O_1 \amalg \O_2$, $\widetilde{\O}_i:= \O_i$ for $i=1,2$. By the
functoriality of $F$, we also have $F[\O_2]\neq\emptyset$
and, consequently, the left-hand side of
(\ref{Eq:DecompAx}) is non-empty, whereas the
right-hand side of (\ref{Eq:DecompAx}) would be empty in
case $F[\pmb{\emptyset}]=\emptyset$, a contradiction.
\end{proof}

\begin{lemma}[\sc Commutativity for $(F,\Eta)$] \label{lem:comm}
For every pair
$(\O_1,\O_2)\in\Obj(\mathfrak{D}_r)$,
 we have
\begin{equation}
\label{Eq:EtaComm} \eta\big(F[\O_1] \times F[\O_2]\big) =
\eta\big(F[\O_2] \times F[\O_1]\big).
\end{equation}
\end{lemma}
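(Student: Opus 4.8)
The plan is to derive commutativity directly from the single axiom (D1), using as the only additional input the fact (Lemma~\ref{lem:emptyset}) that $F[\pmb{\emptyset}]$ is a one-element set. The idea is to feed into \eqref{Eq:DecompAx} the two partitions of $\O := \O_1 \amalg \O_2$ obtained from $(\O_1,\O_2)$ by keeping it as is and by swapping its two parts, and to observe that the right-hand side of (D1) then collapses, because two of the four pieces $\O_{ij}$ are empty.

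First I would record a \emph{unit} property of $\Eta$: for every $\O\in\Obj(\mathbf{Set}^r)$ the map $\eta_{(\pmb{\emptyset},\O)}\colon F[\pmb{\emptyset}]\times F[\O]\to F[\pmb{\emptyset}\amalg\O]$ is a bijection, and likewise for $\eta_{(\O,\pmb{\emptyset})}$. Indeed, identifying $F[\pmb{\emptyset}\amalg\O]$ with $F[\O]$ via the functorial image of the canonical bijection $\pmb{\emptyset}\amalg\O\cong\O$, this map is an injection (by \eqref{eq:Eta}) between finite sets of equal cardinality $|F[\O]|$, the equality of cardinalities being forced by $|F[\pmb{\emptyset}]|=1$. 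Hence $\eta\big(F[\pmb{\emptyset}]\times F[\O]\big)=F[\O]$, and symmetrically $\eta\big(F[\O]\times F[\pmb{\emptyset}]\big)=F[\O]$.

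Now I would set $\O=\O_1\amalg\O_2$ and apply (D1) with $(\widetilde{\O}_1,\widetilde{\O}_2)=(\O_2,\O_1)$. Since $\O_1\cap\O_2=\pmb{\emptyset}$, the four intersections are $\O_{11}=\O_1\cap\O_2=\pmb{\emptyset}$, $\O_{12}=\O_1$, $\O_{21}=\O_2$, and $\O_{22}=\O_2\cap\O_1=\pmb{\emptyset}$. Using the unit property, the inner $\eta$-images on the right-hand side of \eqref{Eq:DecompAx} become $\eta\big(F[\pmb{\emptyset}]\times F[\O_1]\big)=F[\O_1]$ and $\eta\big(F[\O_2]\times F[\pmb{\emptyset}]\big)=F[\O_2]$, so the outer map is $\eta_{(\O_1,\O_2)}$ and the whole right-hand side equals $\eta\big(F[\O_1]\times F[\O_2]\big)$. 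Thus (D1) reads
\[
\eta\big(F[\O_1]\times F[\O_2]\big)\cap\eta\big(F[\O_2]\times F[\O_1]\big)=\eta\big(F[\O_1]\times F[\O_2]\big),
\]
which gives the inclusion $\eta\big(F[\O_1]\times F[\O_2]\big)\subseteq\eta\big(F[\O_2]\times F[\O_1]\big)$. Exchanging the roles of $\O_1$ and $\O_2$ (that is, applying (D1) with first partition $(\O_2,\O_1)$ and second partition $(\O_1,\O_2)$) yields the reverse inclusion, and the desired equality \eqref{Eq:EtaComm} follows. The only delicate point is the unit step: one must check that the injection $\eta_{(\pmb{\emptyset},\O)}$ is actually onto, which hinges on finiteness of the sets $F[\O]$ together with $|F[\pmb{\emptyset}]|=1$; once this is in place, the remainder is just a bookkeeping of the indices $\O_{ij}$.
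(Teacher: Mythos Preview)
Your proof is correct and follows essentially the same approach as the paper: both apply (D1) to the pair of partitions $(\O_1,\O_2)$ and $(\O_2,\O_1)$, use $|F[\pmb{\emptyset}]|=1$ together with injectivity of $\eta$ to collapse the inner $\eta$-images (your ``unit property''), and then swap the roles of the two partitions to obtain the reverse inclusion. The only cosmetic difference is that you isolate the unit property as a preliminary observation, whereas the paper establishes it inline within the computation.
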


\begin{proof}
Applying Axiom~(D1) to the partitions
\[
\O:= \O_1 \amalg \O_2 = \O_2 \amalg \O_1,
\]
we find that
\begin{align*}
\mathfrak{I}&:= \eta\big(F[\O_1] \times F[\O_2]\big) \cap
\eta\big(F[\O_2] \times F[\O_1]\big)\\[2mm] &=
\eta\big(\eta\big(F[\O_1 \cap \O_2] \times F[\O_1]\big)
\times \eta\big(F[\O_2] \times F[\O_1 \cap \O_2]\big)\big).
\end{align*}
By Lemma~\ref{lem:emptyset} and injectivity of the $\Eta$-maps, the map
\[
\eta_{(\pmb{\emptyset},\O_1)}: F[\pmb{\emptyset}]
\times F[\O_1] \rightarrow F[\O_1]
\]
is surjective; that is,
\[
\eta\big(F[\O_1 \cap \O_2] \times F[\O_1]\big) = F[\O_1].
\]
Similarly, we have
\[
\eta\big(F[\O_2] \times F[\O_1 \cap \O_2]\big) = F[\O_2].
\]
Thus,
\[
\mathfrak{I} = \eta\big(F[\O_1] \times F[\O_2]\big).
\]
By an analogous application of (D1) and Lemma~\ref{lem:emptyset} to the
partitions
\[
\O = \O_2 \amalg \O_1 = \O_1 \amalg \O_2,
\]
we find that
\[
\mathfrak{I} = \eta\big(F[\O_2] \times F[\O_1]\big),
\]
and the proof is complete.
\end{proof}

\begin{lemma}[\sc $3$-Associativity for $(F,\Eta)$] 
\label{lem:assoc3}
For pairwise
disjoint $\O_1,\O_2,\O_3\in\break\Obj({\bf Set}^r)$, we have
\begin{equation}
\label{Eq:EtaAss3} 
\eta\big(\eta\big(F[\O_1] \times
F[\O_2]\big) \times F[\O_3]\big)= \eta\big(F[\O_1]
\times \eta\big(F[\O_2]\times F[\O_3]\big)\big).
\end{equation}
\end{lemma}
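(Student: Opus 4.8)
The plan is to realise both sides of \eqref{Eq:EtaAss3} as one and the same intersection of composite sets inside $F[\O]$, where $\O := \O_1 \amalg \O_2 \amalg \O_3$, and to extract that intersection by two applications of Axiom~(D1). The two partitions of $\O$ that I would feed into (D1) are the ``left-nested'' splitting $(\O_1 \amalg \O_2,\, \O_3)$ and the ``right-nested'' splitting $(\O_1,\, \O_2 \amalg \O_3)$, corresponding to the two ways of associating the triple disjoint union.

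First I would apply (D1) to these two partitions, reading the left-nested one as $(\O_1,\O_2)$ in the notation of (D1) and the right-nested one as $(\widetilde{\O}_1,\widetilde{\O}_2)$. Using the componentwise disjointness of $\O_1,\O_2,\O_3$, the four intersections $\O_{ij}=\O_i\cap\widetilde{\O}_j$ come out as $\O_{11}=\O_1$, $\O_{12}=\O_2$, $\O_{21}=\pmb{\emptyset}$, $\O_{22}=\O_3$, so that the right-hand side of \eqref{Eq:DecompAx} becomes $\eta\big(\eta(F[\O_1]\times F[\O_2])\times\eta(F[\pmb{\emptyset}]\times F[\O_3])\big)$. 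Here I would invoke Lemma~\ref{lem:emptyset} together with the surjectivity of $\eta_{(\pmb{\emptyset},\O_3)}$ (precisely the fact already established in the proof of Lemma~\ref{lem:comm}) to rewrite $\eta(F[\pmb{\emptyset}]\times F[\O_3])=F[\O_3]$. This collapses the right-hand side to $\eta\big(\eta(F[\O_1]\times F[\O_2])\times F[\O_3]\big)$, namely the left-hand side of \eqref{Eq:EtaAss3}, while the left-hand side of \eqref{Eq:DecompAx} reads $\eta(F[\O_1\amalg\O_2]\times F[\O_3])\cap\eta(F[\O_1]\times F[\O_2\amalg\O_3])$.

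Next I would run (D1) a second time with the two splittings exchanged, i.e.\ with $(\O_1,\O_2\amalg\O_3)$ in the role of $(\O_1,\O_2)$ and $(\O_1\amalg\O_2,\O_3)$ in the role of $(\widetilde{\O}_1,\widetilde{\O}_2)$. Now the intersections are $\O_{11}=\O_1$, $\O_{12}=\pmb{\emptyset}$, $\O_{21}=\O_2$, $\O_{22}=\O_3$, and (D1) yields $\eta\big(\eta(F[\O_1]\times F[\pmb{\emptyset}])\times\eta(F[\O_2]\times F[\O_3])\big)$. Using Lemma~\ref{lem:emptyset} and Lemma~\ref{lem:comm} to simplify $\eta(F[\O_1]\times F[\pmb{\emptyset}])=\eta(F[\pmb{\emptyset}]\times F[\O_1])=F[\O_1]$, this collapses to $\eta\big(F[\O_1]\times\eta(F[\O_2]\times F[\O_3])\big)$, the right-hand side of \eqref{Eq:EtaAss3}. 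Since the left-hand side of this second instance of \eqref{Eq:DecompAx} is the very same intersection $\eta(F[\O_1\amalg\O_2]\times F[\O_3])\cap\eta(F[\O_1]\times F[\O_2\amalg\O_3])$ as before (set intersection being commutative), both bracketings in \eqref{Eq:EtaAss3} equal this common intersection, and hence each other.

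I expect the only delicate points to be bookkeeping rather than conceptual. One must verify carefully that componentwise disjointness forces the ``cross'' intersections to vanish (e.g.\ $\O_3\cap\O_1=\pmb{\emptyset}$) while the ``diagonal'' ones recover the expected parts, and one must ensure that the reduction of an $\eta$-factor with an empty argument to the identity is legitimate---this is exactly where Lemma~\ref{lem:emptyset} and the surjectivity argument from the proof of Lemma~\ref{lem:comm} enter. Beyond choosing the two partitions and matching them against the symmetric shape of \eqref{Eq:DecompAx}, I anticipate no genuine obstacle.
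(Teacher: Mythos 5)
Your proposal is correct and follows essentially the same route as the paper: both apply Axiom~(D1) to the two partitions $(\O_1\amalg\O_2)\amalg\O_3=\O_1\amalg(\O_2\amalg\O_3)$ (and then with the roles swapped), identify the common intersection $\eta\big(F[\O_1\amalg\O_2]\times F[\O_3]\big)\cap\eta\big(F[\O_1]\times F[\O_2\amalg\O_3]\big)$, and collapse the $\eta$-factor with empty argument via Lemma~\ref{lem:emptyset} and the surjectivity argument from the proof of Lemma~\ref{lem:comm}. Your bookkeeping of the cross intersections $\O_{ij}$ matches the paper's computation exactly, so there is nothing to add.
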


\begin{proof}
We shall show that both sides of (\ref{Eq:EtaAss3}) equal
the intersection
\[
\mathfrak{I}:= \eta\big(F[\O_1 \amalg \O_2] \times
F[\O_3]\big) \cap \eta\big(F[\O_1] \times
F[\O_2 \amalg \O_3]\big).
\]
Applying (D1) to the partitions
\[
\O:= (\O_1 \amalg \O_2) \amalg \O_3 = \O_1 \amalg
(\O_2 \amalg \O_3),
\]
we find that
\[
\mathfrak{I} = \eta\big(\eta\big(F[\O_1] \times
F[\O_2]\big) \times \eta\big(F[\O_1 \cap \O_3]
\times F[\O_3]\big)\big);
\]
and, arguing as in the proof of Lemma~\ref{lem:comm}, this equation
simplifies to
\[
\mathfrak{I} = \eta\big(\eta\big(F[\O_1] \times
F[\O_2]\big)\times F[\O_3]\big).
\]
The same argument, when applied to the partitions
\[
\O = \O_1 \amalg (\O_2 \amalg \O_3) = (\O_1 \amalg \O_2)
\amalg \O_3,
\]
yields
\[
\mathfrak{I} = \eta\big(F[\O_1] \times \eta\big(F[\O_2]
\times F[\O_3]\big)\big),
\]
whence (\ref{Eq:EtaAss3}).
\end{proof}

For the sake of convenience, for pairwise disjoint elements
$\O_1,\dots,\O_m\in \Obj(\mathbf{Set}^r)$,
let us call expressions formed by applying 
$\Eta$-maps to $F[\O_1]$, \dots, $F[\O_m]$ (in any order), 
with each set $F[\O_i]$
occurring exactly once, an {\em $\Eta$-bracketing of
$F[\O_1]$, \dots, $F[\O_m]$}. More formally, for any
$\O\in\Obj(\mathbf{Set}^r)$,
we call the set $F[\O]$ an $\Eta$-bracketing of $F[\O]$; and, given 
an $\Eta$-bracketing $B_1$ of $F[\O_{i_1}]$, \dots, $F[\O_{i_k}]$ and 
an $\Eta$-bracketing $B_2$ of $F[\O_{i_{k+1}}]$, \dots, $F[\O_{i_m}]$, 
with $1\le k\le n-1$ and $\{i_1,i_2,\dots,i_m\}=\{1,2,\dots,m\}$,
the expression
$\eta(B_1\times B_2)$ is, by definition, an $\Eta$-bracketing of
$F[\O_1]$, \dots, $F[\O_m]$. For example, 
the left-hand side and the right-hand side 
of \eqref{Eq:EtaAss3} are two possible
$\Eta$-bracketings of $F[\O_1]$, $F[\O_2]$, $F[\O_3]$, as are
$$\eta\big(\eta\big(F[\O_3] \times
F[\O_2]\big) \times F[\O_1]\big)
\quad \text{and}\quad  
\eta\big(F[\O_3]
\times \eta\big(F[\O_1]\times F[\O_2]\big)\big).
$$

A simple consequence of Lemma~\ref{lem:comm} and (the proof of) 
Lemma~\ref{lem:assoc3} is the following fact.

\begin{corollary} \label{cor:eta3}
All $\Eta$-bracketings of
$F[\O_1]$, $F[\O_2]$, $F[\O_3]$ are equal to
\begin{equation} \label{eq:eta3} 
\eta\big(F[\O_1\amalg \O_2]\times F[\O_3]\big)
\cap\eta\big(F[\O_1\amalg \O_3]\times F[\O_2]\big)
\cap\eta\big(F[\O_2\amalg \O_3]\times F[\O_1]\big).
\end{equation}
\end{corollary}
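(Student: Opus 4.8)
The plan is to first cut the number of bracketings down to a manageable list, then identify a single representative bracketing with the triple intersection in \eqref{eq:eta3}, and finally invoke symmetry to handle the rest. Throughout I denote the right-hand side of \eqref{eq:eta3} by $T$, and I write $T_{12}:=\eta(F[\O_1\amalg\O_2]\times F[\O_3])$, $T_{13}:=\eta(F[\O_1\amalg\O_3]\times F[\O_2])$, and $T_{23}:=\eta(F[\O_2\amalg\O_3]\times F[\O_1])$ for its three factors.

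First I would use the commutativity relation of Lemma~\ref{lem:comm}, applied at the outer $\eta$-map, to rewrite every right-associated bracketing $\eta(F[\O_a]\times\eta(F[\O_b]\times F[\O_c]))$ as the left-associated bracketing $\eta(\eta(F[\O_b]\times F[\O_c])\times F[\O_a])$. A further application of Lemma~\ref{lem:comm} at the inner map shows that the inner pair may be regarded as unordered. Consequently every $\Eta$-bracketing of $F[\O_1],F[\O_2],F[\O_3]$ coincides with one of the three expressions $\eta(\eta(F[\O_x]\times F[\O_y])\times F[\O_z])$, indexed by the choice of the ``outer'' index $z\in\{1,2,3\}$, and it suffices to prove that each of these equals $T$.

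Next I would exploit the fact that the proof of Lemma~\ref{lem:assoc3} establishes more than the stated equality, namely the sharp identity $\eta(\eta(F[\O_1]\times F[\O_2])\times F[\O_3]) = \eta(F[\O_1\amalg\O_2]\times F[\O_3])\cap\eta(F[\O_1]\times F[\O_2\amalg\O_3])$. Applying Lemma~\ref{lem:comm} to the second factor identifies the right-hand side with $T_{12}\cap T_{23}$, so the bracketing $B:=\eta(\eta(F[\O_1]\times F[\O_2])\times F[\O_3])$ satisfies $B=T_{12}\cap T_{23}$. Re-running the same argument after swapping the inner pair (interchanging the roles of $\O_1$ and $\O_2$, which is legitimate by Lemma~\ref{lem:comm}) produces the second representation $B=T_{12}\cap T_{13}$. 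Combining the two gives $B\subseteq T_{12}\cap T_{13}\cap T_{23}=T$, while the reverse inclusion $T\subseteq B$ is immediate, since $T$ is an intersection that in particular includes the two factors $T_{12}$ and $T_{23}$ whose intersection is $B$. Hence $B=T$.

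Finally, since the three factors $T_{12},T_{13},T_{23}$ are permuted among themselves under any permutation of the indices $\{1,2,3\}$ (once more by Lemma~\ref{lem:comm}), the expression $T$ is symmetric in $\O_1,\O_2,\O_3$; the identity $B=T$ is therefore invariant under relabelling, and the identical computation shows that each of the three left-associated bracketings equals $T$. Together with the reduction of the first step, this yields the claim. I expect the only genuine effort to lie in the bookkeeping of the third paragraph — tracking which two of the three factors $T_{12},T_{13},T_{23}$ each application of (the proof of) Lemma~\ref{lem:assoc3} delivers — but this is routine, and no ingredient beyond Lemmas~\ref{lem:comm} and \ref{lem:assoc3} is needed.
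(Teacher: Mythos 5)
Your core argument is exactly the paper's: extract from the proof of Lemma~\ref{lem:assoc3} the sharpened identity $\eta\big(\eta\big(F[\O_1]\times F[\O_2]\big)\times F[\O_3]\big)=T_{12}\cap T_{23}$, interchange $\O_1$ and $\O_2$ to obtain the second representation $T_{12}\cap T_{13}$, intersect the two, and finish by the symmetry of $T$. That part is correct and is precisely how the paper proceeds.

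There is, however, one step whose justification fails as written: the claim that Lemma~\ref{lem:comm}, ``applied at the outer $\eta$-map,'' rewrites $\eta\big(F[\O_a]\times\eta\big(F[\O_b]\times F[\O_c]\big)\big)$ as $\eta\big(\eta\big(F[\O_b]\times F[\O_c]\big)\times F[\O_a]\big)$. Lemma~\ref{lem:comm} asserts $\eta\big(F[\O_1]\times F[\O_2]\big)=\eta\big(F[\O_2]\times F[\O_1]\big)$ only for the \emph{full} sets $F[\O_1]$ and $F[\O_2]$; it says nothing about images of proper subsets such as $\eta\big(F[\O_b]\times F[\O_c]\big)\subseteq F[\O_b\amalg\O_c]$, and the composition operators of this paper need not be pointwise commutative (cf.\ Example~\ref{ex:3}), so one cannot in general commute a subset past another set. (Your \emph{inner} swap is unobjectionable: there Lemma~\ref{lem:comm} is applied to full $F$-sets, and the outer $\eta$-map is then applied to two equal sets.) The repair is immediate and costs nothing: Lemma~\ref{lem:assoc3} itself identifies each right-associated bracketing $\eta\big(F[\O_a]\times\eta\big(F[\O_b]\times F[\O_c]\big)\big)$ with the left-associated bracketing $\eta\big(\eta\big(F[\O_a]\times F[\O_b]\big)\times F[\O_c]\big)$, which is already one of your three representatives; with this substitution the remainder of your argument goes through unchanged and coincides with the paper's proof.
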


\begin{proof}
From the proof of Lemma~\ref{lem:assoc3}, we know that
\begin{equation*}
\eta\big(\eta\big(F[\O_1] \times
F[\O_2]\big) \times F[\O_3]\big)= 
 \eta\big(F[\O_1 \amalg \O_2] \times
F[\O_3]\big) \cap \eta\big(F[\O_1] \times
F[\O_2 \amalg \O_3]\big).
\end{equation*}
If, in this equation, we interchange $\O_1$ and $\O_2$ and use
Lemma~\ref{lem:comm} (commutativity), then we obtain
\begin{equation*}
\eta\big(\eta\big(F[\O_1] \times
F[\O_2]\big) \times F[\O_3]\big)= 
 \eta\big(F[\O_1 \amalg \O_2] \times
F[\O_3]\big) \cap \eta\big(F[\O_2] \times
F[\O_1 \amalg \O_3]\big).
\end{equation*}
Both equations together, plus another application of
Lemma~\ref{lem:comm}, imply our claim.
\end{proof}

\begin{lemma}[\sc $4$-Permutability for $(F,\Eta)$] 
\label{lem:assoc4}
If\/ $\O_1,\O_2,\O_3,\O_4$ are pairwise disjoint elements of\/
$\Obj(\mathbf{Set}^r)$, then
all $\Eta$-bracketings of $F[\O_1]$, $F[\O_2]$, $F[\O_3]$, 
$F[\O_4]$ are equal to each other.
\end{lemma}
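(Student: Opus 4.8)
The plan is to prove the stronger assertion that \emph{every} $\Eta$-bracketing of $F[\O_1],\dots,F[\O_4]$ equals the single, manifestly symmetric set
\[
\mathcal{I}:=\bigcap_{\emptyset\ne S\subsetneq\{1,2,3,4\}}
\eta\big(F[\O_S]\times F[\O_{S^{c}}]\big),
\]
where $\O_S:=\coprod_{i\in S}\O_i$, $S^{c}:=\{1,2,3,4\}-S$, and $F_i:=F[\O_i]$; this intersection runs over the seven unordered bipartitions of $\{1,2,3,4\}$ and is well defined by Lemma~\ref{lem:comm}. Since $\mathcal I$ is independent of the bracketing, the lemma follows immediately. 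Two tools will be used repeatedly. First, because each $\eta$-map is injective, it commutes with intersections: $\eta\big((A\cap A')\times B\big)=\eta(A\times B)\cap\eta(A'\times B)$, and similarly in the second argument. Second, exactly as in the proof of Lemma~\ref{lem:comm}, Lemma~\ref{lem:emptyset} and injectivity give $\eta\big(F[\pmb{\emptyset}]\times F[\O]\big)=F[\O]=\eta\big(F[\O]\times F[\pmb{\emptyset}]\big)$ for every $\O$. Finally, since the top-level split of any $4$-bracketing has shape $3+1$ or $2+2$, it suffices to treat these two cases.

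For the $3+1$ shape the bracketing is $\eta(C\times F_j)$, where $C$ is a bracketing of the other three sets. Substituting for $C$ the triple-intersection expression of Corollary~\ref{cor:eta3}, distributing $\eta(-\times F_j)$ over this intersection (first tool), and then applying Corollary~\ref{cor:eta3} a second time to each of the three resulting $3$-bracketings of \emph{union}-sets, one collects exactly the seven terms of $\mathcal I$. Hence every $3+1$ bracketing equals $\mathcal I$, and in particular all such bracketings coincide.

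For the $2+2$ shape the key observation is that Axiom~(D1), applied to the two partitions $\O_1\amalg\O_2\amalg\O_3\amalg\O_4=(\O_1\amalg\O_2)\amalg(\O_3\amalg\O_4)=(\O_1\amalg\O_3)\amalg(\O_2\amalg\O_4)$, produces on its right-hand side precisely $\eta\big(\eta(F_1\times F_2)\times\eta(F_3\times F_4)\big)$, because the componentwise intersections $\O_i\cap\widetilde\O_j$ collapse to the singletons $\O_1,\O_2,\O_3,\O_4$. Thus this bracketing equals $\eta(F[\O_1\amalg\O_2]\times F[\O_3\amalg\O_4])\cap\eta(F[\O_1\amalg\O_3]\times F[\O_2\amalg\O_4])$; running the same argument with the remaining $2|2$ partition (and Lemma~\ref{lem:comm}) shows it also equals $\eta(F[\O_1\amalg\O_2]\times F[\O_3\amalg\O_4])\cap\eta(F[\O_1\amalg\O_4]\times F[\O_2\amalg\O_3])$. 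Comparing the two, the bracketing equals the triple intersection $\mathcal I_{2|2}$ of all three $2|2$ terms, and the same holds for every $2+2$ bracketing.

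It remains to reconcile the two shapes, i.e.\ to prove $\mathcal I_{2|2}=\mathcal I$; as $\mathcal I\subseteq\mathcal I_{2|2}$ is trivial, the point is the reverse inclusion $\mathcal I_{2|2}\subseteq\eta(F[\O_S]\times F[\O_{S^c}])$ for each $3|1$ bipartition. I expect this to be the main obstacle: one would like to re-associate a $2+2$ bracket into a $3+1$ bracket, but Lemma~\ref{lem:assoc3} licenses re-association only of \emph{full} $F$-sets, not of sub-bracketings. The remedy is a further application of Axiom~(D1), now to a $2|2$ and a $3|1$ partition, e.g.\ $(\O_1\amalg\O_2)\amalg(\O_3\amalg\O_4)=(\O_1\amalg\O_2\amalg\O_3)\amalg\O_4$: one factor of the resulting bracketing is a $\pmb{\emptyset}$-factor, which collapses by the second tool, giving $\eta(F[\O_1\amalg\O_2]\times F[\O_3\amalg\O_4])\cap\eta(F[\O_1\amalg\O_2\amalg\O_3]\times F_4)=\eta\big(F[\O_1\amalg\O_2]\times\eta(F_3\times F_4)\big)$. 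Since the $2+2$ bracketing is contained in the right-hand side (monotonicity of $\eta$, using $\eta(F_1\times F_2)\subseteq F[\O_1\amalg\O_2]$) and the right-hand side is contained in $\eta(F[\O_1\amalg\O_2\amalg\O_3]\times F_4)$, the missing containment follows; the other three $3|1$ terms are obtained by symmetry (choosing a $2+2$ representative of $\mathcal I_{2|2}$ adapted to each). Combining the three cases shows every bracketing equals $\mathcal I$, which completes the proof.
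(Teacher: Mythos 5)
Your proof is correct, and at the decisive step it takes a genuinely different route from the paper's. Both arguments start the same way: Axiom~(D1) applied to pairs of $2|2$ partitions shows that the $2{+}2$ bracketing equals the intersection $\mathcal I_{2|2}$ of the three sets $\eta\big(F[\O_S]\times F[\O_{S^c}]\big)$ with $|S|=2$, and Corollary~\ref{cor:eta3} followed by bracketing with the fourth set handles the $3{+}1$ shape. The divergence lies in how the two shapes are reconciled. The paper proves only the containment of the $3{+}1$ expression in the $2{+}2$ expression and then forces equality by a cardinality count (both sides have size $|F[\O_1]|\cdot|F[\O_2]|\cdot|F[\O_3]|\cdot|F[\O_4]|$ by injectivity). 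You instead expand the $3{+}1$ shape completely, via a second application of Corollary~\ref{cor:eta3} to the three intermediate bracketings of union-sets, into the symmetric seven-term intersection $\mathcal I$ --- which is precisely the content of the paper's Corollary~\ref{cor:eta4}, there deduced only \emph{after} the lemma --- and you obtain the missing containment $\mathcal I_{2|2}\subseteq\mathcal I$ structurally, by applying (D1) to a mixed pair consisting of a $2|2$ and a $3|1$ partition (one block $\O_i\cap\widetilde\O_j$ is then empty and collapses by Lemma~\ref{lem:emptyset}) together with monotonicity and injectivity of the $\Eta$-maps. Your version buys a purely set-theoretic argument with no counting, and delivers Corollary~\ref{cor:eta4} as a by-product; the paper's counting step is shorter but less structural. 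The only point you leave implicit is that in the $3{+}1$ case the singleton factor $F[\O_j]$ may sit on either side of the outer $\eta$; this is harmless, since Corollary~\ref{cor:eta3} covers all orderings and your computation is symmetric in this respect.
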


\begin{proof}
The possible $\Eta$-bracketings of 
$F[\O_1]$, $F[\O_2]$, $F[\O_3]$, $F[\O_4]$ are
\begin{gather} 
\label{eq:4A}
\eta\big(\eta\big(\eta\big(F[\O_1]\times F[\O_2]\big)
\times F[\O_3]\big)\times F[\O_4]\big),\\
\label{eq:4B}
\eta\big(\eta\big(F[\O_1]\times \eta\big(F[\O_2]
\times F[\O_3]\big)\big)\times F[\O_4]\big),\\
\label{eq:4C}
\eta\big(\eta\big(F[\O_1]\times F[\O_2]\big)
\times \eta\big(F[\O_3]\times F[\O_4]\big)\big),\\
\label{eq:4D}
\eta\big(F[\O_1]\times \eta\big(\eta\big(F[\O_2]
\times F[\O_3]\big)\times F[\O_4]\big)\big),\\
\label{eq:4E}
\eta\big(F[\O_1]\times \eta\big(F[\O_2]
\times \eta\big(F[\O_3]\times F[\O_4]\big)\big)\big),
\end{gather}
together with all expressions arising from the above by
permuting $F[\O_1]$, $F[\O_2]$, $F[\O_3]$, $F[\O_4]$.
By Lemma~\ref{lem:assoc3} (3-associativity), 
the bracketing \eqref{eq:4A} equals
the bracketing \eqref{eq:4B}, and the bracketing \eqref{eq:4D} equals
the bracketing \eqref{eq:4E}. It suffices therefore to prove the
equality of \eqref{eq:4A}, \eqref{eq:4C}, \eqref{eq:4E},
and all expressions arising from these three by 
permuting $F[\O_1]$, $F[\O_2]$, $F[\O_3]$, $F[\O_4]$.

Applying (D1) to the partitions
$$\O:=(\O_1\amalg \O_2)\amalg (\O_3\amalg \O_4)=
(\O_1\amalg \O_3)\amalg (\O_2\amalg \O_4),$$
we find that
\begin{multline*}
\eta\big(F[\O_1\amalg\O_2]\times F[\O_3\amalg \O_4]\big)
\cap
\eta\big(F[\O_1\amalg\O_3]\times F[\O_2\amalg \O_4]\big)\\
=
\eta\big(\eta\big(F[\O_1]\times F[\O_2]\big)
\times
\eta\big(F[\O_3]\times F[\O_4]\big)\big).
\end{multline*}
Using this equation, as well as the one which arises by interchanging
$\O_1$ and $\O_2$ and applying
Lemma~\ref{lem:comm} (commutativity) 
on the resulting right-hand side, we obtain
\begin{multline} \label{eq:eta22}
\eta\big(\eta\big(F[\O_1]\times F[\O_2]\big)
\times
\eta\big(F[\O_3]\times F[\O_4]\big)\big)\\
=
\eta\big(F[\O_1\amalg\O_2]\times F[\O_3\amalg \O_4]\big)
\cap
\eta\big(F[\O_1\amalg\O_3]\times F[\O_2\amalg \O_4]\big)\\
\cap
\eta\big(F[\O_2\amalg\O_3]\times F[\O_1\amalg \O_4]\big)
.
\end{multline}
On the other hand, by Corollary~\ref{cor:eta3}, the expression
$$\eta\big(\eta\big(F[\O_1]\times F[\O_2]\big)\times F[\O_3]\big)$$ 
equals the expression \eqref{eq:eta3}. 
Therefore, ``bracketing" these two expressions
by $\eta(\,.\,\times F[\O_4])$, using the injectivity of $\eta$, and 
applying Lemma~\ref{lem:assoc3}
(3-associativity) to the expression resulting from \eqref{eq:eta3},
we arrive at
\begin{multline} \label{eq:eta123}
\eta\big(\eta\big(\eta\big(F[\O_1]\times F[\O_2]\big)\times F[\O_3]\big)
\times F[\O_4]\big)\\
=
\eta\big(F[\O_1\amalg \O_2]\times \eta\big(F[\O_3]\times F[\O_4]\big)\big)
\cap\eta\big(F[\O_1\amalg \O_3]\times \eta\big(F[\O_2]\times
F[\O_4]\big)\big)\\ 
\cap\eta\big(F[\O_2\amalg \O_3]\times \eta\big(F[\O_1]\times F[\O_4]\big)\big).
\end{multline}
Since $\eta\big(F[\O_3]\times F[\O_4]\big)\subseteq
F[\O_3\amalg\O_4]$, and similar inclusions hold for other
combinations of the $\O_i$'s, we have
\begin{align*} 
\eta\big(F[\O_1\amalg \O_2]\times \eta\big(F[\O_3]\times
F[\O_4]\big)\big)
&\subseteq \eta\big(F[\O_1\amalg \O_2]\times F[\O_3\amalg\O_4]\big),
\\
\eta\big(F[\O_1\amalg \O_3]\times \eta\big(F[\O_2]\times
F[\O_4]\big)\big)
&\subseteq \eta\big(F[\O_1\amalg \O_3]\times F[\O_2\amalg\O_4]\big),
\\
\eta\big(F[\O_2\amalg \O_3]\times \eta\big(F[\O_1]\times
F[\O_4]\big)\big)
&\subseteq \eta\big(F[\O_2\amalg \O_3]\times F[\O_1\amalg\O_4]\big).
\end{align*}
Altogether, these inclusions imply that the right-hand side of
\eqref{eq:eta123} is contained in the right-hand side of
\eqref{eq:eta22}. We infer that
\begin{equation} \label{eq:22eta123} 
\eta\big(\eta\big(\eta\big(F[\O_1]\times F[\O_2]\big)\times F[\O_3]\big)
\times F[\O_4]\big)\subseteq
\eta\big(\eta\big(F[\O_1]\times F[\O_2]\big)
\times
\eta\big(F[\O_3]\times F[\O_4]\big)\big).
\end{equation}
However, by injectivity of the $\Eta$-maps, both sides of
\eqref{eq:22eta123}
have cardinality 
$$\vert F[\O_1]\vert\cdot \vert F[\O_2]\vert\cdot 
\vert F[\O_3]\vert\cdot \vert F[\O_4]\vert.$$ 
Hence, they must be
equal, which proves the equality of \eqref{eq:4A} and \eqref{eq:4C}.

An analogous argument proves equality of \eqref{eq:4C} and
\eqref{eq:4E}.

Since, by Lemma~\ref{lem:comm} (commutativity), 
the right-hand side of \eqref{eq:eta22} is invariant under
permutation of $\O_1,\O_2,\O_3,\O_4$, the proof is complete.
\end{proof}

The (proof of the) above lemma, combined with Lemma~\ref{lem:comm} and
Corollary~\ref{cor:eta3}, leads to the following observation.

\begin{corollary} \label{cor:eta4}
All $\Eta$-bracketings of
$F[\O_1]$, $F[\O_2]$, $F[\O_3]$, $F[\O_4]$ are equal to
\begin{multline} \label{eq:eta4} 
\eta\big(F[\O_1\amalg\O_2]\times F[\O_3\amalg \O_4]\big)
\cap
\eta\big(F[\O_1\amalg\O_3]\times F[\O_2\amalg \O_4]\big)\\
\cap
\eta\big(F[\O_2\amalg\O_3]\times F[\O_1\amalg \O_4]\big)\\
\cap\eta\big(F[\O_1] \times F[ \O_2\amalg \O_3 \amalg\O_4]\big)
\cap\eta\big(F[\O_2] \times F[ \O_1\amalg \O_3 \amalg\O_4]\big)\\
\cap\eta\big(F[\O_3] \times F[ \O_1\amalg \O_2 \amalg\O_4]\big)
\cap\eta\big(F[\O_4] \times F[ \O_1\amalg \O_2 \amalg\O_3]\big).
\end{multline}
\end{corollary}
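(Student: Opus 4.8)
The plan is to exploit Lemma~\ref{lem:assoc4}, which already guarantees that all $\Eta$-bracketings of $F[\O_1]$, $F[\O_2]$, $F[\O_3]$, $F[\O_4]$ coincide; call this common set $X$. It therefore suffices to show that $X$ equals the intersection \eqref{eq:eta4}. I will pin down $X$ through the particular $2+2$ bracketing \eqref{eq:4C}, for which Equation~\eqref{eq:eta22} (established inside the proof of Lemma~\ref{lem:assoc4}) already identifies $X$ with the intersection of the first three terms of \eqref{eq:eta4}, namely the three ``$2+2$'' splits.

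For the inclusion of \eqref{eq:eta4} into $X$, I would observe that \eqref{eq:eta4} is the intersection of those three $2+2$ splits with four further terms; dropping the latter can only enlarge the set, so \eqref{eq:eta4} is contained in the intersection of the three $2+2$ splits, which by \eqref{eq:eta22} equals $X$.

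For the reverse inclusion $X\subseteq$~\eqref{eq:eta4}, I treat the seven terms in two groups. That $X$ is contained in each of the three $2+2$ splits is immediate from \eqref{eq:eta22}, since $X$ equals their intersection. For each of the four ``$1+3$'' splits I invoke Lemma~\ref{lem:assoc4} once more: for instance, $X$ equals the bracketing \eqref{eq:4E}, that is $\eta\big(F[\O_1]\times\eta\big(F[\O_2]\times\eta\big(F[\O_3]\times F[\O_4]\big)\big)\big)$, whose inner part $\eta\big(F[\O_2]\times\eta\big(F[\O_3]\times F[\O_4]\big)\big)$ is a subset of $F[\O_2\amalg\O_3\amalg\O_4]$; since the outermost map here is precisely $\eta_{(\O_1,\,\O_2\amalg\O_3\amalg\O_4)}$, feeding in this smaller subset of its domain yields $X\subseteq\eta\big(F[\O_1]\times F[\O_2\amalg\O_3\amalg\O_4]\big)$. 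The remaining three $1+3$ containments follow in the same way, by choosing for each the appropriately permuted $1+3$-shaped bracketing that is equal to $X$ (again by Lemma~\ref{lem:assoc4}, together with Lemma~\ref{lem:comm}). Intersecting all seven containments gives $X\subseteq$~\eqref{eq:eta4}, and combining this with the first inclusion completes the proof.

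Since the heavy lifting---the mutual equality of all bracketings and the explicit intersection description \eqref{eq:eta22}---is already carried out in Lemma~\ref{lem:assoc4}, I do not expect a genuine obstacle here. The only point requiring a moment's care is that each $1+3$ term is obtained by restricting one fixed $\Eta$-map to a subset of its domain, so that plain set-image monotonicity applies verbatim and no new instance of Axiom~(D1) is needed.
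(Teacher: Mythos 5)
Your proof is correct, but it takes a mildly different route from the paper's. The paper starts from \eqref{eq:eta123} and expands each of its three terms $\eta\big(F[\O_i\amalg\O_j]\times\eta\big(F[\O_k]\times F[\O_l]\big)\big)$ via Corollary~\ref{cor:eta3} (applied to the three sets $F[\O_i\amalg\O_j]$, $F[\O_k]$, $F[\O_l]$), which after an application of Lemma~\ref{lem:comm} produces exactly the seven terms of \eqref{eq:eta4}; 4-permutability then transfers the identity to all bracketings. You instead argue by double inclusion: the inclusion of \eqref{eq:eta4} into the common value $X$ is immediate because \eqref{eq:eta4} refines the three-fold intersection in \eqref{eq:eta22}, which already equals $X$; and the reverse inclusion follows from \eqref{eq:eta22} for the $2{+}2$ splits and from plain monotonicity of a fixed $\Eta$-map under shrinking its domain for the $1{+}3$ splits (using that every permuted $1{+}3$-shaped bracketing equals $X$, which the paper's definition of $\Eta$-bracketing indeed licenses). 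Both arguments lean on intermediate identities from the proof of Lemma~\ref{lem:assoc4} --- the paper on \eqref{eq:eta123}, you on \eqref{eq:eta22} --- so neither is more self-contained; but your version avoids a second round of Corollary~\ref{cor:eta3} and the bookkeeping of collating the nine expanded terms into seven, at the cost of splitting the proof into two inclusions. Your closing remark that no new instance of Axiom~(D1) is needed is accurate and worth keeping.
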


\begin{proof}
By Corollary~\ref{cor:eta3}, we have
\begin{multline*}
\eta\big(F[\O_1\amalg \O_2]\times \eta\big(F[\O_3]\times
F[\O_4]\big)\big)\\
=
\eta\big(F[\O_1\amalg\O_2\amalg \O_3]\times F[\O_4]\big)
\cap\eta\big(F[\O_1\amalg \O_2\amalg \O_4]\times F[\O_3]\big)\\
\cap\eta\big(F[\O_3\amalg \O_4]\times F[\O_1\amalg\O_2]\big).
\end{multline*}
Analogous identities hold for the other two terms on the right-hand
side of \eqref{eq:eta123}. If we combine this with Lemma~\ref{lem:comm}
(commutativity) and Lemma~\ref{lem:assoc4} (4-permutability), 
then the claim follows immediately. 
\end{proof}

Before we state the general permutability result, let us
introduce the following short-hand notation,
which will be used in its proof. Given a subset $I$ of $[m]$,
where $I=\{i_1,i_2,\dots,i_k\}$ with $i_1<i_2<\dots<i_k$,
we let $\O_I$ stand for $\O_{i_1}\amalg\dots\amalg \O_{i_k}$.

\begin{lemma}[\sc $m$-Permutability for $(F,\Eta)$] 
\label{lem:assoc}
If\/ $\O_1,\dots,\O_m$ are pairwise disjoint elements of\/
$\Obj(\mathbf{Set}^r)$, then
all $\Eta$-bracketings of $F[\O_1]$, \dots, $F[\O_m]$ are equal to each other.
\end{lemma}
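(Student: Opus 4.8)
The plan is to prove, by induction on $m$, the stronger assertion that every $\Eta$-bracketing of $F[\O_1],\dots,F[\O_m]$ equals the single, manifestly symmetric intersection
\[
\mathcal I_m:=\bigcap_{\emptyset\ne S\subsetneq[m]}\eta\big(F[\O_S]\times F[\O_{[m]\setminus S}]\big),
\]
which generalises the expressions \eqref{eq:eta3} and \eqref{eq:eta4} of Corollaries~\ref{cor:eta3} and \ref{cor:eta4}; by Lemma~\ref{lem:comm} the terms for $S$ and for $[m]\setminus S$ coincide, so $\mathcal I_m$ is indeed invariant under permutation of $\O_1,\dots,\O_m$. Since $\mathcal I_m$ refers to no particular bracketing, this stronger claim immediately yields the lemma. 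The cases $m\le4$ are covered by Corollaries~\ref{cor:eta3} and \ref{cor:eta4} (the cases $m\le2$ being trivial), so I would fix $m\ge5$ and assume the claim for all smaller numbers of arguments, with the convention $\mathcal I_{\{i\}}:=F[\O_i]$ for singletons.

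For the inductive step, I would first observe that any $\Eta$-bracketing $\beta$ is, by definition, of the form $\eta(B_I\times B_J)$, where $\{I,J\}$ is a two-block partition of $[m]$, $B_I$ is a bracketing of the sets $F[\O_i]$ with $i\in I$, and $B_J$ a bracketing of the $F[\O_j]$ with $j\in J$. As $|I|,|J|<m$, the induction hypothesis gives $B_I=\mathcal I_I$ and $B_J=\mathcal I_J$, the corresponding symmetric intersections over $I$ and $J$. It then suffices to establish the two inclusions $\beta\subseteq\mathcal I_m$ and $\mathcal I_m\subseteq\beta$, for $\mathcal I_m$ is thereby exhibited as the common value of all bracketings.

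To prove $\beta\subseteq\mathcal I_m$, I would fix a nonempty proper $S\subset[m]$ and show $\beta\subseteq\eta(F[\O_S]\times F[\O_{[m]\setminus S}])$. From $B_I=\mathcal I_I$ one has $B_I\subseteq\eta(F[\O_{I\cap S}]\times F[\O_{I\setminus S}])$, and likewise $B_J\subseteq\eta(F[\O_{J\cap S}]\times F[\O_{J\setminus S}])$; whenever one of the four index blocks is empty, the corresponding factor $F[\pmb{\emptyset}]$ is absorbed by the surjectivity of $\eta_{(\pmb{\emptyset},\,\cdot\,)}$ established in the proof of Lemma~\ref{lem:comm}. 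Applying $\eta$ and then Axiom~(D1) to the two partitions $\O_I\amalg\O_J=\O=\O_S\amalg\O_{[m]\setminus S}$ (whose componentwise intersections are precisely $\O_{I\cap S},\O_{I\setminus S},\O_{J\cap S},\O_{J\setminus S}$), one gets
\[
\eta\big(\eta\big(F[\O_{I\cap S}]\times F[\O_{I\setminus S}]\big)\times\eta\big(F[\O_{J\cap S}]\times F[\O_{J\setminus S}]\big)\big)=\eta\big(F[\O_I]\times F[\O_J]\big)\cap\eta\big(F[\O_S]\times F[\O_{[m]\setminus S}]\big),
\]
so that $\beta=\eta(B_I\times B_J)$ is contained in the right-hand side, hence in $\eta(F[\O_S]\times F[\O_{[m]\setminus S}])$. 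Intersecting over all $S$ yields $\beta\subseteq\mathcal I_m$.

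The reverse inclusion $\mathcal I_m\subseteq\beta$ is the main obstacle. Taking the term $S=I$ shows $\mathcal I_m\subseteq\eta(F[\O_I]\times F[\O_J])$, so every $x\in\mathcal I_m$ can be written uniquely, by injectivity of the $\Eta$-maps, as $x=\eta(a\times b)$ with $a\in F[\O_I]$ and $b\in F[\O_J]$; the point is to show $a\in\mathcal I_I=B_I$ and $b\in\mathcal I_J=B_J$. For a given split $\{I_1,I_2\}$ of $I$, I would apply Axiom~(D1) to the two partitions $\O_I\amalg\O_J=\O=\O_{I_1}\amalg\O_{I_2\cup J}$; here one refinement block is empty, and (D1) collapses (again via the surjectivity from Lemma~\ref{lem:comm}) to
\[
\eta\big(F[\O_I]\times F[\O_J]\big)\cap\eta\big(F[\O_{I_1}]\times F[\O_{I_2\cup J}]\big)=\eta\big(\eta\big(F[\O_{I_1}]\times F[\O_{I_2}]\big)\times F[\O_J]\big).
\]
Since $x\in\mathcal I_m$ lies in both sets on the left (the second being the term $S=I_1$), it lies in the right-hand side; comparing with $x=\eta(a\times b)$ and using injectivity forces $a\in\eta(F[\O_{I_1}]\times F[\O_{I_2}])$. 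As $\{I_1,I_2\}$ was arbitrary, $a\in\mathcal I_I=B_I$, and a symmetric argument (splitting $J$ and applying (D1) to $\O=\O_{J_1}\amalg\O_{J_2\cup I}$) gives $b\in\mathcal I_J=B_J$. Hence $x=\eta(a\times b)\in\eta(B_I\times B_J)=\beta$, which proves $\mathcal I_m\subseteq\beta$ and completes the induction.
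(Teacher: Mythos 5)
Your proof is correct, and while it shares the paper's overall strategy --- an induction on $m$ showing that every $\Eta$-bracketing equals the manifestly symmetric intersection \eqref{eq:mbrack} (your $\mathcal I_m$) --- the inductive step is carried out quite differently. The paper distinguishes three cases according to the shape of the outermost $\eta$ (two composite factors, or a lone $F[\O_i]$ on the left or on the right), substitutes the inductive expressions for the inner bracketings, distributes the outer $\eta$ over the intersections via injectivity, and then applies Corollary~\ref{cor:eta4} (resp.\ Corollary~\ref{cor:eta3}) to each resulting term to reassemble \eqref{eq:mbrack}; the key reduction is thus to $4$-permutability of composite blocks. You instead treat every bracketing uniformly as $\eta(B_I\times B_J)$ and establish the two inclusions separately: the inclusion $\beta\subseteq\mathcal I_m$ by one application of (D1) to the pair of partitions $\{I,J\}$ and $\{S,[m]\setminus S\}$ for each $S$, and the reverse inclusion by tracking an element $x=\eta\big((a,b)\big)$ and using (D1) with one empty refinement block (collapsed via the surjectivity of $\eta_{(\pmb{\emptyset},\,\cdot\,)}$) together with injectivity to force $a\in\mathcal I_I$ and $b\in\mathcal I_J$. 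Your route avoids both the case analysis and the appeal to Corollary~\ref{cor:eta4} inside the induction --- in fact it would let you dispense with $m=3,4$ as separate base cases --- at the price of slightly more delicate element-level bookkeeping in the reverse inclusion; the paper's argument stays entirely at the level of set identities. Both are sound.
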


\begin{proof}
We shall prove by induction on $m$ that, for each $m\ge2$, 
all $\Eta$-bracketings of $F[\O_1]$, \dots, $F[\O_m]$ equal
\begin{equation} \label{eq:mbrack} 
\underset{I\ne \emptyset\ne J}{\underset{I\cup J=[m],\
I\cap J=\emptyset}{\bigcap_{I,J\subseteq [m]}}}
\eta\big(F[\O_I]\times F[\O_J]\big).
\end{equation}

For $m=2$, the assertion follows from Lemma~\ref{lem:comm}
(commutativity). For $m=3$, the assertion is equivalent to
Corollary~\ref{cor:eta3} (again, modulo Lemma~\ref{lem:comm}), 
and for $m=4$, the assertion is equivalent to
Corollary~\ref{cor:eta4}.

Now let $m\ge5$, and let us suppose that the assertion is true up to $m-1$.
Consider an $\Eta$-bracketing of $\O_1,\dots,\O_m$. There are three
possibilities. Either this bracketing has the form
\begin{equation} \label{eq:Form1}
\eta\big(\eta\big(E_1\big) \times\eta\big(E_2\big) \big) ,
\end{equation}
where $E_1$ and $E_2$ are expressions involving $\Eta$-maps and 
distinct $\O_i$'s, each of them involving at least two $\O_i$'s, 
or the form
\begin{equation} \label{eq:Form2}
\eta\big(F[\O_i] \times\eta\big(E_3\big) \big) ,
\end{equation}
where $E_3$ involves $\Eta$-maps and
$\O_1,\dots,\O_{i-1},\O_{i+1},\dots,\O_m$, for some $i$, or the form
\begin{equation} \label{eq:Form3}
\eta\big(\eta\big(E_4\big)\times F[\O_i] \big) ,
\end{equation}
where $E_4$ involves $\Eta$-maps and
$\O_1,\dots,\O_{i-1},\O_{i+1},\dots,\O_m$, for some $i$.

We start by considering \eqref{eq:Form1}. Let us assume that
$E_1$ involves all $\O_r$'s for $r\in R$, and $E_2$ involves all $\O_s$'s for
$s\in S$, with $R\cup S=[m]$, $R\cap S=\emptyset$, $R\ne \emptyset\ne
S$. By the inductive hypothesis, we know that 
\begin{equation} \label{eq:E1} 
\eta(E_1)=\underset{I_1\ne \emptyset\ne J_1}{\underset{I_1\cup J_1=R,\
I_1\cap J_1=\emptyset}{\bigcap_{I_1,J_1\subseteq R}}}
\eta\big(F[\O_{I_1}]\times F[\O_{J_1}]\big)
\end{equation}
and
\begin{equation} \label{eq:E2} 
\eta(E_2)=\underset{I_2\ne \emptyset\ne J_2}{\underset{I_2\cup J_2=S,\
I_2\cap J_2=\emptyset}{\bigcap_{I_2,J_2\subseteq S}}}
\eta\big(F[\O_{I_2}]\times F[\O_{J_2}]\big).
\end{equation}
If we substitute \eqref{eq:E1} and \eqref{eq:E2}
in \eqref{eq:Form1} and use injectivity
of the $\Eta$-maps, then we obtain
$$
\underset{I_1\ne \emptyset\ne J_1}{\underset{I_1\cup J_1=R,\
I_1\cap J_1=\emptyset}{\bigcap_{I_1,J_1\subseteq R}}}
\ \
\underset{I_2\ne \emptyset\ne J_2}{\underset{I_2\cup J_2=S,\
I_2\cap J_2=\emptyset}{\bigcap_{I_2,J_2\subseteq S}}}
\eta\big(\eta\big(F[\O_{I_1}]\times F[\O_{J_1}]\big)
\times
\eta\big(F[\O_{I_2}]\times F[\O_{J_2}]\big)\big).
$$
We may now apply Corollary~\ref{cor:eta4} to each of the terms on the
right-hand side of this equation. It is not difficult to see that, 
together with Lemma~\ref{lem:comm} (commutativity), we obtain
\eqref{eq:mbrack}. 

Next we consider \eqref{eq:Form2}. 
By the inductive hypothesis, we know that 
$$
\eta(E_3)=\underset{I\ne \emptyset\ne J}{\underset{I\cup
J=[m]- \{i\},\
I\cap J=\emptyset}{\bigcap_{I,J\subseteq [m]-\{i\}}}}
\eta\big(F[\O_{I}]\times F[\O_{J}]\big).
$$
If we substitute this
in \eqref{eq:Form2} and use injectivity
of the $\Eta$-maps, then we obtain
$$
\underset{I\ne \emptyset\ne J}{\underset{I\cup J=[m]-\{i\},\
I\cap J=\emptyset}{\bigcap_{I,J\subseteq [m]-\{i\}}}}
\eta\big(F[\O_i]\times 
\eta\big(F[\O_{I}]\times F[\O_{J}]\big)\big).
$$
We may now apply Corollary~\ref{cor:eta3} to each of the terms on the
right-hand side of this equation. It is not difficult to see that, 
together with Lemma~\ref{lem:comm} (commutativity), we obtain
\eqref{eq:mbrack}. 

The argument for \eqref{eq:Form3} is analogous. This completes the
proof of the lemma.
\end{proof}

For 
$\O = (\Omega^{(1)},\ldots,\Omega^{(r)}) \in \Obj({\bf Set}^r)$
and an integer $\rho\in[r]$, we write
$(\omega,\rho)\in\O$ to mean $\omega\in\Omega^{(\rho)}$.
This is the concept of {\it base point\/} needed in the present
context.

\begin{lemma} \label{lem:Fzerl}
For non-empty $\O \in \Obj({\bf Set}^r)$ and
every choice of base point $(\omega,\rho)\in\O,$ we have
\begin{equation}
\label{Eq:FOmegaDecomp} F[\O]=
\underset{(\omega,\rho)\in\O_1\subseteq\O}
{\bigcup_{\O_1\in\Obj({\bf Set}^r)}} \eta\big(F_\Eta[\O_1]
\times F[\O -\O_1]\big).
\end{equation}
\end{lemma}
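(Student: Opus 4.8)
The plan is to prove the two inclusions in \eqref{Eq:FOmegaDecomp} separately, with the reverse inclusion being immediate and the forward inclusion carrying all the content. For ``$\supseteq$'', I would simply note that $F_\Eta[\O_1]\subseteq F[\O_1]$ for every $\O_1$, so that each set $\eta\big(F_\Eta[\O_1]\times F[\O-\O_1]\big)$ is contained in $\eta\big(F[\O_1]\times F[\O-\O_1]\big)\subseteq F[\O_1\amalg(\O-\O_1)]=F[\O]$ by the very definition \eqref{eq:Eta} of the $\Eta$-maps. Thus the whole right-hand side of \eqref{Eq:FOmegaDecomp} sits inside $F[\O]$.

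For ``$\subseteq$'' I would fix $x\in F[\O]$ and run an extremality argument rather than a direct induction. Consider all $\O_1$ with $(\omega,\rho)\in\O_1\subseteq\O$ and $x\in\eta\big(F[\O_1]\times F[\O-\O_1]\big)$. This collection is non-empty: for $\O_1=\O$, Lemma~\ref{lem:emptyset} together with commutativity (Lemma~\ref{lem:comm}) shows that $\eta_{(\O,\pmb\emptyset)}$ is surjective, whence $\eta\big(F[\O]\times F[\pmb\emptyset]\big)=F[\O]\ni x$. Since there are only finitely many candidates, I can pick one for which $\|\O_1\|$ is minimal and write $x=\eta(p,v)$ with $p\in F[\O_1]$ and $v\in F[\O-\O_1]$. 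It then suffices to show that $p$ is indecomposable, i.e.\ $p\in F_\Eta[\O_1]$, because then $x\in\eta\big(F_\Eta[\O_1]\times F[\O-\O_1]\big)$, one of the terms in the union on the right-hand side of \eqref{Eq:FOmegaDecomp}.

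The crux is therefore the claim $p\in F_\Eta[\O_1]$, which I would prove by contradiction. If $p$ were decomposable, then by the definition of $F_\Eta$ there would be a partition $\O_1=\II\amalg\JJ$ with $\II\neq\pmb\emptyset\neq\JJ$ and $p\in\eta\big(F[\II]\times F[\JJ]\big)$; using commutativity (Lemma~\ref{lem:comm}) I may assume $(\omega,\rho)\in\II$. Then $x=\eta(p,v)$ lies in $\eta\big(\eta\big(F[\II]\times F[\JJ]\big)\times F[\O-\O_1]\big)$, which by $3$-associativity (Lemma~\ref{lem:assoc3}) equals $\eta\big(F[\II]\times\eta\big(F[\JJ]\times F[\O-\O_1]\big)\big)$; since $\eta\big(F[\JJ]\times F[\O-\O_1]\big)\subseteq F[\JJ\amalg(\O-\O_1)]=F[\O-\II]$, I obtain $x\in\eta\big(F[\II]\times F[\O-\II]\big)$. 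As $(\omega,\rho)\in\II$ and $\|\II\|<\|\O_1\|$, this contradicts the minimality of $\|\O_1\|$. Hence $p\in F_\Eta[\O_1]$, completing the forward inclusion.

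The main obstacle, and the reason I organise the argument around a minimal $\O_1$ instead of unwinding a recursion, is that only \emph{set-level} associativity is available: Lemma~\ref{lem:assoc3} asserts equality of the two bracketed \emph{images}, not the elementwise identity $\eta(\eta(a,b),v)=\eta(a,\eta(b,v))$. The minimality device sidesteps this entirely, since I never need to track the individual components of $x$ through the associativity rearrangement; it is enough to observe that membership of $x$ in one $\Eta$-bracketing forces membership in a smaller-support composite, contradicting minimality. Consequently the proof relies only on facts already recorded as consequences of Axiom~(D1), namely $|F[\pmb\emptyset]|=1$ (Lemma~\ref{lem:emptyset}), commutativity (Lemma~\ref{lem:comm}), and $3$-associativity (Lemma~\ref{lem:assoc3}).
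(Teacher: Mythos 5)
Your proposal is correct and follows essentially the same route as the paper's proof: both arguments pick an $\O_1$ of minimal norm containing the base point with $x\in\eta\big(F[\O_1]\times F[\O-\O_1]\big)$, and then use the definition of $F_\Eta$ together with $3$-associativity/permutability (Lemma~\ref{lem:assoc3}, resp.\ Corollary~\ref{cor:eta3}) to derive a smaller admissible $\II$ and contradict minimality. The only cosmetic difference is that you phrase the decomposability of the first component via the element $p=\eta^{-1}(x)_1$, while the paper phrases it as membership of $x$ in $\eta\big((F[\O_1]-F_\Eta[\O_1])\times F[\O-\O_1]\big)$; these are equivalent by injectivity of the $\Eta$-maps.
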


\begin{proof}
Let $x\in F[\O]$ be an arbitrary element, and consider the
totality of all $\O_1 \in \Obj({\bf Set}^r)$ such that
$(\omega,\rho)\in\O_1\subseteq\O$ and $x\in\eta\big(F[\O_1] \times
F[\O-\O_1]\big)$. Such $\O_1$'s do exist; for instance
$\O_1=\O$ has these properties, since the map
\[
\eta_{(\O,\pmb{\emptyset})}: F[\O] \times
F[\pmb{\emptyset}] \hookrightarrow F[\O]
\]
is surjective. Among these $\O_1$'s we choose one
of minimal norm $\|\O_1\|$ (recall the definition in
\eqref{eq:omabs}), say $\O_1(x)$. 
Now suppose
that $x\not\in\eta\big(F_\Eta[\O_1(x)] \times
F[\O - \O_1(x)]\big)$. Then, by 
the definition of $F_\Eta$, the injectivity of
$\eta$, and the choice of $\O_1(x)$, we must have 
\begin{multline*}
x \in \eta\Big(\big(F[\O_1(x)] - F_\Eta[\O_1(x)]\big)
\times F[\O-\O_1(x)]\Big)\\[2mm]
= \underset{\II\neq\pmb{\emptyset}\neq
\JJ}{\underset{\II\amalg
\JJ=\O_1(x)}{\bigcup_{(\II,
\JJ)\in\Obj(\mathfrak{D}_r)}}}\eta\Big(\eta
\big(F[\II] \times F[\JJ]\big)\times
F[\O-\O_1(x)]\Big).
\end{multline*}
Consequently, there exists $(\II_1,
\JJ_1)\in\Obj(\mathfrak{D}_r)$ such that
$\II_1\amalg\JJ_1=\O_1(x)$,
$\II_1\neq\pmb{\emptyset}\neq
\JJ_1$, and
\[
x \in \eta\Big(\eta\big(F[\II_1] \times
F[\JJ_1]\big) \times F[\O-\O_1(x)]\Big).
\]
Using Corollary~\ref{cor:eta3} (3-permutability),
we see that the latter set is
contained in both $\eta\big(F[\II_1] \times
F[\O-\II_1]\big)$ and $\eta
\big(F[\JJ_1]\times F[\O-\JJ_1]\big)$.
The base point $(\omega,\rho)$ is contained in
$\II_1$ or $\JJ_1$; to fix ideas, say $(\omega,\rho)\in\II_1$. 
Hence, we arrive at
the assertion that
\[
x\in\eta\big(F[\II_1] \times
F[\O-\II_1]\big),\,(\omega,\rho)\in
\II_1\subseteq\O,\,\|\II_1\|<\|\O_1(x)
\|,
\]
contradicting the choice of $\O_1(x)$. We conclude that
$x$ is indeed contained in
\[
\eta\big(F_\Eta[\O_1(x)] \times F[\O - \O_1(x)]\big),
\]
and (\ref{Eq:FOmegaDecomp}) is proven.
\end{proof}

\begin{lemma} \label{lem:Fdisj}
The right-hand side of \eqref{Eq:FOmegaDecomp} is a
disjoint union.
\end{lemma}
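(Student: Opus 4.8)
The plan is to argue by contradiction. Suppose that some $x\in F[\O]$ lies in two of the sets appearing on the right-hand side of \eqref{Eq:FOmegaDecomp}, say $x\in\eta(F_\Eta[\O_1]\times F[\O-\O_1])$ and $x\in\eta(F_\Eta[\widetilde\O_1]\times F[\O-\widetilde\O_1])$, where both $\O_1$ and $\widetilde\O_1$ contain the base point $(\omega,\rho)$ and $\O_1\ne\widetilde\O_1$. I would apply Axiom~(D1) to the two partitions $\O=\O_1\amalg(\O-\O_1)=\widetilde\O_1\amalg(\O-\widetilde\O_1)$, that is, with $\O_2:=\O-\O_1$ and $\widetilde\O_2:=\O-\widetilde\O_1$. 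Writing $\O_{11}=\O_1\cap\widetilde\O_1$, $\O_{12}=\O_1-\widetilde\O_1$, $\O_{21}=\widetilde\O_1-\O_1$, and $\O_{22}=\O-(\O_1\cup\widetilde\O_1)$ (these four parts being pairwise disjoint), Axiom~(D1) together with the two inclusions $F_\Eta[\,\cdot\,]\subseteq F[\,\cdot\,]$ yields that $x$ lies in the $\Eta$-bracketing $\eta(\eta(F[\O_{11}]\times F[\O_{12}])\times\eta(F[\O_{21}]\times F[\O_{22}]))$.

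Next I would observe that $\O_{11}$ contains the base point, hence $\O_{11}\ne\pmb{\emptyset}$, while the assumption $\O_1\ne\widetilde\O_1$ forces at least one of $\O_{12}$, $\O_{21}$ to be non-empty. By symmetry assume $\O_{12}\ne\pmb{\emptyset}$. Invoking Lemma~\ref{lem:assoc4} ($4$-permutability) and the inclusion $\eta(F[\O_{21}]\times F[\O_{22}])\subseteq F[\O_{21}\amalg\O_{22}]$, the bracketing above is contained in $\eta(\eta(F[\O_{11}]\times F[\O_{12}])\times F[\O_{21}\amalg\O_{22}])$; since $\O_{21}\amalg\O_{22}=\O-\O_1$, this gives $x\in\eta(\eta(F[\O_{11}]\times F[\O_{12}])\times F[\O-\O_1])$.

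From the first membership I would write $x=\eta_{(\O_1,\O-\O_1)}(a,b)$ with $a\in F_\Eta[\O_1]$ and $b\in F[\O-\O_1]$. Comparing this with the inclusion just derived and using injectivity of $\eta_{(\O_1,\O-\O_1)}$, the first coordinate satisfies $a\in\eta(F[\O_{11}]\times F[\O_{12}])$. But $\O_{11}\amalg\O_{12}=\O_1$ with \emph{both} parts non-empty, so by the definition of $F_\Eta[\O_1]$ the element $a$ is decomposable, contradicting $a\in F_\Eta[\O_1]$. The case $\O_{21}\ne\pmb{\emptyset}$ is entirely symmetric: one uses the decomposition $x=\eta_{(\widetilde\O_1,\O-\widetilde\O_1)}(\widetilde a,\widetilde b)$ with $\widetilde a\in F_\Eta[\widetilde\O_1]$, the reordered bracketing $\eta(\eta(F[\O_{11}]\times F[\O_{21}])\times F[\O-\widetilde\O_1])$ (noting $\O_{11}\amalg\O_{21}=\widetilde\O_1$ and $\O_{12}\amalg\O_{22}=\O-\widetilde\O_1$), and injectivity to derive that $\widetilde a$ is decomposable.

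The step I expect to be the main obstacle is the permutability bookkeeping: one must check that the specific reassociation and reordering of the four-fold bracketing produced by (D1) genuinely matches the two-fold split $\eta(F[\O_1]\times F[\O-\O_1])$ along $\O_1=\O_{11}\amalg\O_{12}$, so that injectivity of the outer $\eta$-map can be applied to isolate the first coordinate. Once the index sets $\O_{ij}$ are fixed and the role of the base point in guaranteeing $\O_{11}\ne\pmb{\emptyset}$ is pinned down, Lemma~\ref{lem:assoc4} and Corollary~\ref{cor:eta4} make the remaining manipulations routine.
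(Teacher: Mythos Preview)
Your proof is correct and follows essentially the same route as the paper's: apply (D1) to the two partitions, use that the base point lies in $\O_{11}=\O_1\cap\widetilde\O_1$ and that one of $\O_1-\widetilde\O_1$, $\widetilde\O_1-\O_1$ is non-empty, and then conclude via injectivity of $\eta$ that the putative indecomposable first coordinate is actually decomposable. The only superfluous step is your appeal to $4$-permutability (Lemma~\ref{lem:assoc4}): the inclusion $\eta(\eta(F[\O_{11}]\times F[\O_{12}])\times\eta(F[\O_{21}]\times F[\O_{22}]))\subseteq\eta(\eta(F[\O_{11}]\times F[\O_{12}])\times F[\O-\O_1])$ follows immediately from $\eta(F[\O_{21}]\times F[\O_{22}])\subseteq F[\O_{21}\amalg\O_{22}]$ and the fact that $\eta$, being a function, preserves inclusions of its arguments---no rebracketing is required.
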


\begin{proof}
In the context of Lemma~\ref{lem:Fzerl}, let
$\O_1,\O_2\in\Obj(\mathbf{Set}^r)$ be such that
$(\omega,\rho)\in\O_i\subseteq\O$ and
$\O_1\neq\O_2$, say $\O_1\not\subseteq\O_2$. It is enough
to show that
\[
\mathfrak{I}:= \eta\big(F[\O_1] \times F[\O - \O_1]\big)
\cap \eta\big(F[\O_2] \times F[\O - \O_2]\big)
\]
has an empty intersection with $\eta\big(F_\Eta[\O_1]
\times F[\O - \O_1]\big)$. But, by (D1), we have
\[
\mathfrak{I} \subseteq \eta\Big(\eta\big(F[\O_1\cap\O_2]
\times F[\O_1-\O_2]\big) \times F[\O-\O_1]\Big),
\]
and, by definition of $F_\Eta$ and the fact that
$(\omega,\rho)\in\O_1\cap\O_2 \neq \pmb{\emptyset}
\neq \O_1 - \O_2$, we have
\[
F_\Eta[\O_1] \cap \eta\big(F[\O_1\cap\O_2]\times
F[\O_1-\O_2]\big) = \emptyset.
\]
Consequently, by the injectivity of $\eta$, we must indeed
have
\begin{multline*}
\eta\big(F_\Eta[\O_1] \times F[\O-\O_1]\big) \cap
\mathfrak I\\[2mm]
\subseteq \eta\big(F_\Eta[\O_1]\times F[\O-\O_1]\big)
\cap \eta\Big(\eta\big(F[\O_1\cap \O_2] \times
F[\O_1-\O_2]\big)\times F[\O-\O_1]\Big) = \emptyset,
\end{multline*}
as required.
\end{proof}

\begin{lemma}[\sc Functoriality of $F_\Eta$]
\label{lem:funct} 
Let $\O,\widetilde\O\in\Obj({\bf Set}^r)$, and let
$\ff:\O\rightarrow\widetilde\O$ be a morphism. Then
\[
F[\ff](F_\Eta[\O]) = F_\Eta[\widetilde\O];
\]
that is, setting
$F_\Eta[\ff]:=F[\ff]
\vert_{F_\Eta[\O]},$ we get a functor
$F_\Eta: {\bf Set}^r\rightarrow {\bf Set}$.
\end{lemma}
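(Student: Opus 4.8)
The plan is to exploit the naturality of $\Eta$ together with the fact that every morphism of $\mathbf{Set}^r$ is a (componentwise) bijection. Thus $\ff:\O\to\widetilde\O$ is a bijection, and hence $F[\ff]:F[\O]\to F[\widetilde\O]$ is a bijection as well, since $F$ is a functor into $\mathbf{Set}$. The case $\O=\pmb{\emptyset}$ is trivial, because then $\widetilde\O=\pmb{\emptyset}$ and both $F_\Eta$-sets are empty by definition; so I assume $\O\neq\pmb{\emptyset}$ throughout.

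First I would establish the transport identity: for every decomposition $\O=\II\amalg\JJ$ with $(\II,\JJ)\in\Obj(\mathfrak{D}_r)$, one has
\[
F[\ff]\big(\eta(F[\II]\times F[\JJ])\big)=\eta\big(F[\ff(\II)]\times F[\ff(\JJ)]\big),
\]
where $\ff(\II),\ff(\JJ)$ denote the (componentwise) images. This is precisely the content of the naturality square \eqref{Eq:NatDiag}: the restrictions of $\ff$ to $\II$ and $\JJ$ assemble into a morphism $(\II,\JJ)\to(\ff(\II),\ff(\JJ))$ of $\mathfrak{D}_r$, the left vertical map $(F\times F)[\ff]$ carries $F[\II]\times F[\JJ]$ bijectively onto $F[\ff(\II)]\times F[\ff(\JJ)]$, while the right vertical map $(F\circ\amalg)[\ff]$ is literally $F[\ff]$. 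Commutativity of the square then yields the displayed equality of images.

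Next I would run over all decompositions at once. Because $\ff$ is a bijection, the assignment $(\II,\JJ)\mapsto(\ff(\II),\ff(\JJ))$ is a bijection from the nontrivial ordered decompositions of $\O$ onto those of $\widetilde\O$, preserving non-emptiness of each part. Applying $F[\ff]$ to the union occurring in the definition of $F_\Eta[\O]$ and invoking the transport identity termwise, we obtain
\[
F[\ff]\Bigg(\underset{\II\neq\pmb{\emptyset}\neq\JJ}{\underset{\II\amalg\JJ=\O}{\bigcup_{(\II,\JJ)\in\Obj(\mathfrak{D}_r)}}}\eta(F[\II]\times F[\JJ])\Bigg)=\underset{\widetilde\II\neq\pmb{\emptyset}\neq\widetilde\JJ}{\underset{\widetilde\II\amalg\widetilde\JJ=\widetilde\O}{\bigcup_{(\widetilde\II,\widetilde\JJ)\in\Obj(\mathfrak{D}_r)}}}\eta(F[\widetilde\II]\times F[\widetilde\JJ]).
\]
Since $F[\ff]$ is a bijection of $F[\O]$ onto $F[\widetilde\O]$, it carries the set-theoretic complement of the left-hand union (inside $F[\O]$) onto the complement of the right-hand union (inside $F[\widetilde\O]$); by definition these complements are exactly $F_\Eta[\O]$ and $F_\Eta[\widetilde\O]$, whence $F[\ff](F_\Eta[\O])=F_\Eta[\widetilde\O]$, as asserted.

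Finally, setting $F_\Eta[\ff]:=F[\ff]\vert_{F_\Eta[\O]}$ gives a well-defined map $F_\Eta[\O]\to F_\Eta[\widetilde\O]$ by the surjectivity just established, and the functor axioms $F_\Eta[\id]=\id$ and $F_\Eta[\gg\circ\ff]=F_\Eta[\gg]\circ F_\Eta[\ff]$ are inherited immediately from those of $F$ by restriction. I expect the only genuine subtlety to be the bookkeeping in the transport identity, namely keeping straight that $(F\circ\amalg)[\ff]$ is literally $F[\ff]$ and that it is the restrictions of $\ff$ to the two parts that feed the naturality square; everything else is formal manipulation of bijections.
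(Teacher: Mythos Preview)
Your proof is correct and follows essentially the same route as the paper: both arguments use the naturality square \eqref{Eq:NatDiag} to transport each $\eta(F[\II]\times F[\JJ])$ along $F[\ff]$, then re-index the union via the bijection $(\II,\JJ)\mapsto(\ff(\II),\ff(\JJ))$ between decompositions of $\O$ and of $\widetilde\O$. Your write-up is slightly more explicit about why bijectivity of $F[\ff]$ lets you pass to complements and about inheriting the functor axioms, but the substance is identical.
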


\begin{proof}
The assertion is obvious if $\O=\pmb{\emptyset}$, so we may
suppose that $\O\neq\pmb{\emptyset}$. Then, using
the naturality of $\Eta$ (that is, the diagram
~(\ref{Eq:NatDiag})), we have
\begin{align*}
F[\ff](F_\Eta[\O]) &= F[\ff]
\Bigg(F[\O]\,-\,\underset{\II_1\neq
\pmb{\emptyset}\neq\JJ_1}
{\underset{\II_1\amalg\JJ_1=\O}
{\bigcup_{(\II_1,\JJ_1)\in
\Obj(\mathfrak{D}_r)}}} \eta\big(F[\II_1] \times
F[\JJ_1]\big)\Bigg)\\[2mm] &=
F[\widetilde\O]\,-\,\underset{\II_1\neq
\pmb{\emptyset}\neq\JJ_1}
{\underset{\II_1\amalg\JJ_1=\O}
{\bigcup_{(\II_1,\JJ_1)\in
\Obj(\mathfrak{D}_r)}}}\big(F\circ\amalg\big)
\big[(\ff\vert_{\II_1},
\ff\vert_{\JJ_1})\big]
\big(\eta\big(F[\II_1] \times
F[\JJ_1]\big)\big)\\[2mm] &=
F[\widetilde\O]\,-\,\underset{\II_1\neq
\pmb{\emptyset}\neq\JJ_1}
{\underset{\II_1 \amalg\JJ_1=\O}
{\bigcup_{(\II_1,\JJ_1)\in
\Obj(\mathfrak{D}_r)}}}\eta\big(F[\ff
\vert_{\II_1}](F[\II_1])\times
F[\ff\vert_{\JJ_1}]
(F[\JJ_1])\big)\\[2mm]
 &= F[\widetilde\O]\,-\,\underset{\II_2
 \neq\pmb{\emptyset}\neq\JJ_2}
{\underset{\II_2 \amalg\JJ_2=\widetilde\O}
{\bigcup_{(\II_2,\JJ_2)\in
\Obj(\mathfrak{D}_r)}}}\eta\big(F[\II_2]
\times F[\JJ_2]\big)\\[2mm]
&= F[\widetilde\O].
\end{align*}
\end{proof}

\section{Proof of Theorem~\ref{Thm:MainThm1}}
\label{sec:main1}

For convenience, let us ``extend" the $\Lambda$-weight $\ww$ to subsets
of $F[\O]$, for all $\O\in\Obj(\mathbf{Set}^r)$.
To be precise, for $A\subseteq F[\O]$, we define
$$
w_\O\big(A\big):=
\sum _{x\in A} ^{}w_{\O}(x).
$$
Given disjoint $\O_1,\O_2\in\Obj(\mathbf{Set}^r)$ and subsets
$A_1\subseteq F_\Eta[\O_1]$ and $A_2\subseteq F[\O_2]$, Axiom~(W2)
says that 
\begin{equation} \label{eq:wmult} 
w_{\O_1\amalg \O_2}(\eta(A_1\times A_2))
=
w_{\O_1}(A_1)\cdot w_{\O_2}(A_2).
\end{equation}
In the sequel, we shall suppress the indices of weights $w$ for better
readability, the indices always being clear from the context.

As a direct consequence of Lemmas~\ref{lem:Fzerl} and \ref{lem:Fdisj}, 
of the injectivity of the $\Eta$-maps, and
of \eqref{eq:wmult}, we have that, for $n_1,n_2,\dots,n_r\in \mathbb N_0$
and $n_\rho>0$,
\begin{multline}
\label{Eq:psiFEval1} 
w\big(F[([n_1],\ldots,
[n_r])]\big)\\ =
\underset{1\in\Omega_1^{(\rho)}}{\sum_{\Omega_1^{(i)}
\subseteq [n_i]\,(1\leq i\leq r)}}\,
w\big(F_\Eta[(\Omega_1^{(1)},\ldots,
\Omega_1^{(r)})]\big)\cdot w\big(F
[([n_1] - \Omega_1^{(1)},\ldots,
[n_r]-\Omega_1^{(r)})]\big).
\end{multline}
Using the functoriality of $F$ and $F_\Eta$, together with
Axiom~(W1), each $\O_1$ with
\[
\O_1 = (\Omega_1^{(1)},\ldots,\Omega_1^{(r)})\subseteq
([n_1],\ldots, [n_r]),
\]
$(1,\rho)\in\O_1$, and cardinalities
$|\Omega_1^{(i)}|=\mu_i$ ($1\leq i\leq r$), is seen to contribute
\begin{equation}
\label{Eq:IntCount}
w\big(F_\Eta[([\mu_1],\ldots,
[\mu_r])]\big)\cdot w\big(F
[([n_1-\mu_1] ,\ldots,
[n_r-\mu_r])]\big)
\end{equation}
to the right-hand side of (\ref{Eq:psiFEval1}). We observe
that (\ref{Eq:IntCount}) does not depend upon
$\O_1$ itself, but only on the cardinalities
$\mu_1,\ldots, \mu_r$ of the components
$\Omega_1^{(1)},\ldots, \Omega_1^{(r)}$. Therefore, the
\[
\frac{\mu_\rho}{n_\rho} \prod_{1\leq i\leq r} \binom{n_i}{\mu_i}
\]
elements $\O_1\in\Obj(\mathbf{Set}^r)$ with 
$\O_1\subseteq ([n_1],\ldots,[n_r])$,
$(1,\rho)\in\O_1$, and such that
$|\Omega_1^{(i)}| = \mu_i$ for $1\leq i\leq r$, contribute
\[
\frac{\mu_\rho}{n_\rho}\, \bigg(\prod_{1\leq i\leq r}
\binom{n_i}{\mu_i}\bigg)
w\big(F_\Eta[([\mu_1],\ldots,
[\mu_r])]\big)\cdot w\big(F
[([n_1-\mu_1] ,\ldots,
[n_r-\mu_r])]\big)
\]
to the right-hand side of (\ref{Eq:psiFEval1}), and we
obtain
\begin{multline*}
w\big(F[([n_1],\ldots, [n_r])]\big) =
{\sum_{0\leq\mu_i\leq n_i\,(1\leq i\leq
r)}}\,
\frac{\mu_\rho}{n_\rho}\, \bigg(\prod_{1\leq i\leq r}
\binom{n_i}{\mu_i}\bigg) 
w\big(F_\Eta[([\mu_1],\ldots,
[\mu_r])]\big)\\
\kern7cm
\cdot w\big(F
[([n_1-\mu_1] ,\ldots,
[n_r-\mu_r])]\big)
,\\
n_\rho>0,
\end{multline*}
or, equivalently, 
\begin{multline}
\label{Eq:psiFEval2} 
n_\rho\cdot w\big(F[([n_1],\ldots, [n_r])]\big) =
{\sum_{0\leq\mu_i\leq n_i\,(1\leq i\leq
r)}}\,
\mu_\rho \bigg(\prod_{1\leq i\leq r}
\binom{n_i}{\mu_i}\bigg) 
w\big(F_\Eta[([\mu_1],\ldots,
[\mu_r])]\big)\\
\kern7cm
\cdot w\big(F
[([n_1-\mu_1] ,\ldots,
[n_r-\mu_r])]\big)
,
\end{multline}
as long as $n_\rho>0$. However, Equation~\eqref{Eq:psiFEval2} holds
as well for $n_\rho=0$, with both sides vanishing, 
so that we are allowed to drop the restriction $n_\rho>0$.

Fix $\rho\in[r]$, multiply both sides of
(\ref{Eq:psiFEval2}) by
\[
 {z_1^{n_1} \cdots z_{\rho-1}^{n_{\rho-1}}
z_\rho^{n_\rho-1}z_{\rho+1}^{n_{\rho+1}}
\cdots z_r^{n_r}} /
{(n_1! \cdots n_r!)},
\]
and sum over all tuples $(n_1,\ldots,n_r)\in\N_0^r$,
to get
\begin{multline}
\label{Eq:PsiFEval1}
{\sum_{n_1,\ldots,n_r\geq0}}\,
w\big(F[([n_1],\ldots,[n_r])]\big)\,\frac { z_1^{n_1} \cdots
 z_{\rho-1}^{n_{\rho-1}}
z_\rho^{n_\rho-1}z_{\rho+1}^{n_{\rho+1}}
\cdots z_r^{n_r}} {n_1! \cdots n_{\rho-1}!\,
(n_\rho-1)!\,n_{\rho+1}!
\cdots n_r!}\\ 
\kern-4cm =
{\sum_{n_1,\ldots,n_r\geq0}}\
{\sum_{0\leq\mu_i\leq n_i\,(1\leq i\leq
r)}}\,
\frac{w\big(F_\Eta[([\mu_1],\ldots,[\mu_r])]\big)}
{\mu_1! \cdots \mu_{\rho-1}!\,
(\mu_\rho-1)!\, \mu_{\rho+1}!\cdots \mu_r!}\\
\cdot\frac{w\big(F[([n_1-\mu_1],
\ldots,[n_r-\mu_r])]\big)}{(n_1-\mu_1)! \cdots
(n_r-\mu_r)!}\,
z_1^{n_1} \cdots  z_{\rho-1}^{n_{\rho-1}}
z_\rho^{n_\rho-1}z_{\rho+1}^{n_{\rho+1}}
\cdots z_r^{n_r},
\end{multline}
where $1/(-1)!$ has to be interpreted as $0$.
The left-hand side equals
\[
\frac{\partial \GF_F}{\partial z_\rho},
\]
while the right-hand side is identified as
\[
\frac{\partial \GF_{F_\Eta}}{\partial z_\rho}
\,\GF_F;
\]
whence the equations
\begin{equation}
\label{Eq:PsiFEtaEval2}
\frac{\partial \GF_F}{\partial
z_\rho} = \frac{\partial \GF_{F_\Eta}}{\partial z_\rho}
\,\GF_F, \qquad 1\leq\rho\leq r.
\end{equation}
Set
\[
Q(z_1,\ldots,z_r):=
\GF_F(z_1,\ldots,z_r)
\exp\big(-\GF_{F_\Eta}(z_1,\ldots,z_r)\big).
\]
Then, in view of Equations (\ref{Eq:PsiFEtaEval2}), the
series $Q$ satisfies
\[
\frac{\partial Q}{\partial z_\rho} = 0,
\qquad 1\leq\rho\leq r.
\]
These last equations force $Q$ to be independent of $z_1,z_2,\dots,z_r$.
However, since\break $\GF_{F_\Eta}(0,\ldots,0) = 0$
by definition of $F_\Eta$, and
since
$\GF_F(0,\ldots,0)=1$
by Axiom~(W0) and Lemma~\ref{lem:emptyset}, direct inspection shows that
\[
Q(0,0,\dots,0) = 1,
\]
and \eqref{Eq:MainThm1} follows.
\qed

\section{Auxiliary results, II}
\label{sec:aux2}

In this section, we complement the results of Section~\ref{sec:aux1}
by establishing several further results which will be 
needed in the proofs of 
Proposition~\ref{prop:Disjoint} and Theorem~\ref{Thm:MainThm2}, 
to be given in the next section.  
The first lemma provides the analogue of Lemma~\ref{lem:assoc} for
$F_\Eta$, namely that arbitrary permutability holds also for
$\Eta$-bracketings of $F_\Eta$-images (see the subsequent paragraph
for the precise definition). All the remaining lemmas concern the maps
$F_\Eta^{(k)}$. 
In all of this section, we assume that $F$ is a
decomposable $r$-sort species with composition operator $\Eta$.

In complete analogy with the corresponding definition in
Section~\ref{sec:aux1}, 
we define the\break concept of an {\em$\Eta$-bracketing of 
$F_\Eta[\O_1], \dots, F_\Eta[\O_m]$} for pairwise disjoint elements\break
$\O_1,\dots,\O_m\in \Obj(\mathbf{Set}^r)$: simply replace $F$ by $F_\Eta$
everywhere in the definition just after the proof of Lemma~\ref{lem:assoc3}.

\begin{lemma}[\sc $m$-Permutability for $(F_\Eta,\Eta)$] 
\label{lem:Feta-assoc}
If\/ $\O_1,\dots,\O_m$ are pairwise disjoint elements of\/
$\Obj(\mathbf{Set}^r)$, then
all $\Eta$-bracketings of $F_\Eta[\O_1]$, \dots, 
$F_\Eta[\O_m]$ are equal to each other.
\end{lemma}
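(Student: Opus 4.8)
The plan is to mirror the proof of Lemma~\ref{lem:assoc}, carrying out the same induction on $m$, but with the work of the base cases shifted onto the results already established for $F$ itself. Two structural facts provide the leverage. First, since $F_\Eta[\O_i]\subseteq F[\O_i]$ for each $i$, any $\Eta$-bracketing of $F_\Eta[\O_1],\dots,F_\Eta[\O_m]$ is contained in the identically shaped $\Eta$-bracketing of $F[\O_1],\dots,F[\O_m]$, and by Lemma~\ref{lem:assoc} the latter is the single, order- and shape-independent set \eqref{eq:mbrack}, say $B$; hence all $\Eta$-bracketings of the $F_\Eta[\O_i]$ are subsets of $B$. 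Second, by the injectivity of the $\Eta$-maps each such bracketing has cardinality exactly $\prod_{i=1}^m|F_\Eta[\O_i]|$. As in the cardinality step of the proof of Lemma~\ref{lem:assoc4} (see \eqref{eq:22eta123}), these two facts let me upgrade any inclusion between two $F_\Eta$-bracketings to an equality.

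The one genuinely new ingredient is the $F_\Eta$-analogue of the base cases, namely commutativity and $3$-associativity for $(F_\Eta,\Eta)$. For commutativity, take $x=\eta(a,b)$ with $a\in F_\Eta[\O_1]$ and $b\in F_\Eta[\O_2]$, and set $\O:=\O_1\amalg\O_2$. By Lemma~\ref{lem:comm} applied to $F$, we may write $x=\eta(b',a')$ with $a'\in F[\O_1]$, $b'\in F[\O_2]$ uniquely determined by injectivity; I claim $a'$ and $b'$ are again indecomposable. If $a'$ were not, then $a'=\eta(c,d)$ with $c\in F[\II]$, $d\in F[\JJ]$ for some $\II\amalg\JJ=\O_1$ with $\II\ne\pmb{\emptyset}\ne\JJ$, so that $x=\eta(b',\eta(c,d))$ is an $\Eta$-bracketing of $F[\O_2],F[\II],F[\JJ]$. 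Corollary~\ref{cor:eta3} then gives $x\in\eta(F[\II]\times F[\O-\II])$, and applying (D1) (equation \eqref{Eq:DecompAx}) to the two partitions $\O_1\amalg\O_2$ and $\II\amalg(\O-\II)$, together with Lemma~\ref{lem:emptyset}, yields $x\in\eta\big(\eta(F[\II]\times F[\JJ])\times F[\O_2]\big)$. By injectivity this forces the $\O_1$-factor $a$ of $x$ to lie in $\eta(F[\II]\times F[\JJ])$, contradicting $a\in F_\Eta[\O_1]$. Hence $a'\in F_\Eta[\O_1]$, and symmetrically $b'\in F_\Eta[\O_2]$, which proves commutativity for $(F_\Eta,\Eta)$. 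The $F_\Eta$-analogue of $3$-associativity \eqref{Eq:EtaAss3} is proved by the same device: expand both sides using Lemma~\ref{lem:assoc3} for $F$ and check, via (D1), Corollary~\ref{cor:eta3} and the disjointness statement of Lemma~\ref{lem:Fdisj}, that the intermediate factors remain indecomposable.

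With commutativity and $3$-associativity for $(F_\Eta,\Eta)$ available, the $F_\Eta$-analogues of Corollaries~\ref{cor:eta3} and \ref{cor:eta4} ($3$- and $4$-permutability) follow by the very same formal manipulations used to deduce those corollaries, the only non-formal step being the cardinality upgrade already described. The induction on $m$ then runs verbatim as in Lemma~\ref{lem:assoc}: an arbitrary $\Eta$-bracketing of $F_\Eta[\O_1],\dots,F_\Eta[\O_m]$ takes one of the three forms \eqref{eq:Form1}--\eqref{eq:Form3}; one substitutes the inductive hypothesis for its proper sub-bracketings, uses injectivity of the $\Eta$-maps, and reduces by the $F_\Eta$-versions of Corollaries~\ref{cor:eta3} and \ref{cor:eta4} together with $F_\Eta$-commutativity to the common value, which is the $F_\Eta$-analogue of \eqref{eq:mbrack}.

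I expect the main obstacle to be precisely the base-case verification of the second paragraph: the re-bracketing identities established for $F$ are equalities of \emph{sets}, not of specific factorizations, so it is not automatic that re-bracketing sends tuples of indecomposable factors to tuples of indecomposable factors. Establishing that it does is where one must genuinely use the definition of $F_\Eta$ in combination with (D1), Corollary~\ref{cor:eta3}, Lemma~\ref{lem:emptyset} and Lemma~\ref{lem:Fdisj}; once this is secured, the passage from $F$ to $F_\Eta$ throughout the rest of the argument is purely formal.
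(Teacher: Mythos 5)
Your approach is genuinely different from the paper's. The paper's proof of Lemma~\ref{lem:Feta-assoc} is a short set-algebra reduction: it writes each $F_\Eta[\O_i]$ as $F[\O_i]$ minus a union of sets $\eta(F[\II]\times F[\JJ])$, applies the product--complement identity \eqref{eq:complement} and injectivity of the $\Eta$-maps to express any $\Eta$-bracketing of the $F_\Eta[\O_i]$ as $B_\Eta(F[\O_1],\dots,F[\O_m])$ minus a union of bracketings in which one slot $F[\O_k]$ is replaced by $\eta(F[\II]\times F[\JJ])$, and then invokes Lemma~\ref{lem:assoc} termwise (viewing each such term as an $\Eta$-bracketing of $m+1$ sets $F[\O_1],\dots,F[\II],F[\JJ],\dots,F[\O_m]$). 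No new base cases, no induction on $m$, no element-chasing. Your core idea instead is that re-bracketing preserves indecomposability of the factors, proved by element-chasing; your verification of this for $m=2$ (commutativity for $(F_\Eta,\Eta)$) is correct, and it is essentially the same mechanism the paper uses elsewhere (e.g.\ in Lemma~\ref{lem:Fdisj}).

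The gap is in your scaffolding for general $m$. You propose to transplant the induction of Lemma~\ref{lem:assoc}, claiming the $F_\Eta$-versions of Corollaries~\ref{cor:eta3} and \ref{cor:eta4} ``follow by the very same formal manipulations.'' They do not: those corollaries and the induction of Lemma~\ref{lem:assoc} are built entirely around the intersection formula \eqref{eq:mbrack}, whose terms $\eta(F[\O_I]\times F[\O_J])$ come from applying (D1) to coarser partitions. There is no $F_\Eta$-analogue of \eqref{eq:mbrack}: the set $\eta(F_\Eta[\O_1]\times F_\Eta[\O_2])$ is \emph{not} $F_\Eta[\O_1\amalg\O_2]$, so the intersection identities on which the formal manipulations rest simply have no counterpart, and your auxiliary facts (containment in the common $F$-value $B$, plus equal cardinalities) only upgrade an already established inclusion to an equality --- they do not produce the inclusions. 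The repair is to run your $m=2$ argument directly for general $m$, bypassing the induction: given $x\in B_\Eta(F_\Eta[\O_1],\dots,F_\Eta[\O_m])$, Lemma~\ref{lem:assoc} places $x$ in $\bar B_\Eta(F[\O_1],\dots,F[\O_m])$ for any other bracketing $\bar B_\Eta$, with unique factors $x_i'\in F[\O_i]$; if some $x_i'=\eta(c,d)$ with $c\in F[\II]$, $d\in F[\JJ]$, $\II\amalg\JJ=\O_i$ nontrivial, then $x$ lies in an $\Eta$-bracketing of the $m+1$ sets $F[\O_1],\dots,F[\II],F[\JJ],\dots,F[\O_m]$, and $(m+1)$-permutability for $(F,\Eta)$ plus injectivity forces the $\O_i$-factor of $x$ in the \emph{original} bracketing $B_\Eta$ into $\eta(F[\II]\times F[\JJ])$, contradicting its indecomposability. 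This yields $B_\Eta(F_\Eta[\cdots])\subseteq\bar B_\Eta(F_\Eta[\cdots])$, and equality follows by symmetry (or by your cardinality count). With that restructuring your proof is complete; as written, the inductive step does not go through.
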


\begin{proof}
Since $F_\Eta[\pmb\emptyset]=\emptyset$ by definition of $F_\Eta$, our
claim holds if at least one of $\O_1,\dots,\O_m$ equals
$\pmb\emptyset$. Hence, we may assume that all of $\O_1,\dots,\O_m$
are non-empty. 

Next we note that, for sets $M_1,\dots,M_m,A_1,\dots,A_m$, we have
\begin{multline} \label{eq:complement}
(M_1- A_1)\times (M_2- A_2)\times\dots\times 
(M_m- A_m)\\=M_1\times M_2\times \cdots \times M_m-
\Bigg(\bigcup_{k=1}^m
M_1\times\cdots\times M_{k-1}\times A_k\times M_{k+1}\times \cdots
\times M_m\Bigg).
\end{multline}
Now assume that we are given two $\Eta$-bracketings of
$F_\Eta[\O_1],\dots,F_\Eta[\O_m]$, say
$$B_\Eta\big(F_\Eta[\O_1],\dots,F_\Eta[\O_m]\big)
\quad \text{and}\quad 
\bar B_\Eta\big(F_\Eta[\O_1],\dots,F_\Eta[\O_m]\big).$$ 
Substituting the definition of $F_\Eta$ into
$B_\Eta\big(F_\Eta[\O_1],\dots,F_\Eta[\O_m]\big)$, and applying
Ident\-ity~\eqref{eq:complement} plus injectivity of $\Eta$-maps, we
find that
\begin{multline*} 
B_\Eta\big(F_\Eta[\O_1],\dots,F_\Eta[\O_m]\big)
=B_\Eta\big(F[\O_1],\dots,F[\O_m]\big)\\
-
\bigcup_{k=1}^m
B_\Eta\Bigg(
F[\O_1],\dots, F[\O_{k-1}], 
\underset{\II \neq
\pmb{\emptyset} \neq \JJ}
                {\underset{\II\amalg
                \JJ=\O_k}
                {\bigcup\limits_{(\II,
                \JJ)\in
                \Obj(\mathfrak{D}_r)}}}\hspace{.8mm}
                \eta\big(F[\II]\times
F[\JJ]\big), F[\O_{k+1}], \dots,
 F[\O_m]\Bigg)\\
=
B_\Eta\big(F[\O_1],\dots,F[\O_m]\big)
\kern10.5cm\\
-
\bigcup_{k=1}^m\
\underset{\II \neq
\pmb{\emptyset} \neq \JJ}
                {\underset{\II\amalg
                \JJ=\O_k}
                {\bigcup\limits_{(\II,
                \JJ)\in
                \Obj(\mathfrak{D}_r)}}}\hspace{.8mm}
B_\Eta\big(
F[\O_1],\dots, F[\O_{k-1}], 
\eta\big(F[\II]\times
F[\JJ]\big), F[\O_{k+1}], \dots,
 F[\O_m]\big).
\end{multline*}
The same argument shows that $\bar 
B_\Eta\big(F_\Eta[\O_1],\dots,F_\Eta[\O_m]\big)$ equals the last
expression where every occurrence of $B_\Eta$ is replaced by $\bar
B_\Eta$. By Lemma~\ref{lem:assoc} ($m$-permutability for $(F,\Eta)$), 
we have
$$B_\Eta\big(F[\O_1],\dots,F[\O_m]\big)
=\bar B_\Eta\big(F[\O_1],\dots,F[\O_m]\big)$$
and
\begin{multline*}
B_\Eta\big(
F[\O_1],\dots, F[\O_{k-1}], 
\eta\big(F[\II]\times
F[\JJ]\big), F[\O_{k+1}], \dots,
 F[\O_m]\big)\\
=\bar B_\Eta\big(
F[\O_1],\dots, F[\O_{k-1}], 
\eta\big(F[\II]\times
F[\JJ]\big), F[\O_{k+1}], \dots,
 F[\O_m]\big),
\end{multline*}
hence
$$
B_\Eta\big(F_\Eta[\O_1],\dots,F_\Eta[\O_m]\big)
=\bar B_\Eta\big(F_\Eta[\O_1],\dots,F_\Eta[\O_m]\big),$$
which establishes our claim.
\end{proof}

\begin{lemma}[\sc Functoriality of $F_\Eta^{(k)}$] 
\label{lem:kfunct}
For each
morphism $\ff: \O\rightarrow\widetilde\O$ in ${\bf Set}^r$ and
every integer $k\geq0$, we have
\[
F[\ff](F_\Eta^{(k)}[\O]) = F_\Eta^{(k)}[\widetilde\O].
\]
\end{lemma}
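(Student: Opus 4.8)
The plan is to prove the statement by induction on $k$, mirroring the recursive definition \eqref{eq:Fetakdef} of $F_\Eta^{(k)}$ and leaning on the functoriality of $F_\Eta$ already established in Lemma~\ref{lem:funct}. The base case $k=0$ is immediate from the definition: if $\O=\pmb\emptyset$ then $\ff$ is (by Lemma~\ref{lem:emptyset}) the identity on a one-element set and $F_\Eta^{(0)}[\pmb\emptyset]=F[\pmb\emptyset]$ is carried to itself, while if $\O\neq\pmb\emptyset$ both sides are empty since $\widetilde\O\neq\pmb\emptyset$ as well (a bijection preserves non-emptiness componentwise). So the real content is the inductive step.

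For the inductive step, assume the claim for $k-1$ and apply $F[\ff]$ to the defining union
\[
F_\Eta^{(k)}[\O]=\underset{\O_1\subseteq\O}{\bigcup_{\O_1\in\Obj({\bf Set}^r)}}\eta\big(F_\Eta[\O_1]\times F_\Eta^{(k-1)}[\O-\O_1]\big).
\]
The key tool is the naturality diagram \eqref{Eq:NatDiag}: for the morphism $(\ff\vert_{\O_1},\ff\vert_{\O-\O_1})$ of $\mathfrak{D}_r$ one has $F[\ff]\circ\eta_{(\O_1,\O-\O_1)}=\eta_{(\widetilde\O_1,\widetilde\O-\widetilde\O_1)}\circ(F[\ff\vert_{\O_1}]\times F[\ff\vert_{\O-\O_1}])$, where $\widetilde\O_1:=\ff(\O_1)$. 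Pushing $F[\ff]$ through the union and through each $\eta$ via this diagram, and then invoking Lemma~\ref{lem:funct} on the first factor ($F[\ff\vert_{\O_1}](F_\Eta[\O_1])=F_\Eta[\widetilde\O_1]$) together with the inductive hypothesis on the second factor ($F[\ff\vert_{\O-\O_1}](F_\Eta^{(k-1)}[\O-\O_1])=F_\Eta^{(k-1)}[\widetilde\O-\widetilde\O_1]$), each term transforms into $\eta\big(F_\Eta[\widetilde\O_1]\times F_\Eta^{(k-1)}[\widetilde\O-\widetilde\O_1]\big)$. This is precisely a generic term in the definition of $F_\Eta^{(k)}[\widetilde\O]$, computed almost exactly as in the display chain proving Lemma~\ref{lem:funct}.

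The step I expect to require the most care is verifying that the reindexing $\O_1\mapsto\widetilde\O_1=\ff(\O_1)$ sets up a \emph{bijection} between the subsets $\O_1\subseteq\O$ and the subsets $\widetilde\O_1\subseteq\widetilde\O$ indexing the two unions, so that the image of the union over $\O$ is exactly the union over $\widetilde\O$ with no terms omitted or doubled. Because $\ff$ is a morphism of ${\bf Set}^r$, hence a componentwise bijection, the assignment $\O_1\mapsto\ff(\O_1)$ is indeed an inclusion-preserving bijection from $\{\O_1:\O_1\subseteq\O\}$ onto $\{\widetilde\O_1:\widetilde\O_1\subseteq\widetilde\O\}$, and it commutes with taking complements, so $\ff(\O-\O_1)=\widetilde\O-\widetilde\O_1$. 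Once this bookkeeping is recorded, the image of $F_\Eta^{(k)}[\O]$ under $F[\ff]$ matches the union defining $F_\Eta^{(k)}[\widetilde\O]$ term by term, completing the induction. It then follows, exactly as in Lemma~\ref{lem:funct}, that setting $F_\Eta^{(k)}[\ff]:=F[\ff]\vert_{F_\Eta^{(k)}[\O]}$ yields a well-defined functor, i.e.\ an $r$-sort species.
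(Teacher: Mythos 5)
Your proposal is correct and follows essentially the same route as the paper: induction on $k$, pushing $F[\ff]$ through the defining union via the naturality diagram \eqref{Eq:NatDiag}, invoking Lemma~\ref{lem:funct} on the $F_\Eta$-factor and the inductive hypothesis on the $F_\Eta^{(k-1)}$-factor, and reindexing the union by $\O_1\mapsto\ff(\O_1)$. The reindexing bookkeeping you single out is exactly the (implicit) final step in the paper's display chain, so nothing is missing.
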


\begin{proof}
We use induction on $k$, our claim being obvious for $k=0$.
Suppose that the assertion holds for $0\leq k<K$ with some
$K\geq1$. Then, using the definition of $F_\Eta^{(k)}$,
the functoriality of $F_\Eta$ already
demonstrated in Lemma~\ref{lem:funct}, 
the inductive hypothesis, as well as the naturality
of $\Eta$, we find that
\begin{align*}
F[\ff](F_\Eta^{(K)}[\O]) &= F[\ff]
\Bigg(\underset{\O^\prime\subseteq\O}{\bigcup_{\O^\prime\in\Obj({\bf
Set}^r)}} \eta\big(F_\Eta[\O^\prime] \times F_\Eta^{(K-1)}[\O
- \O^\prime]\big)\Bigg)\\[2mm] &=
\underset{\O^\prime\subseteq\O}{\bigcup_{\O^\prime\in\Obj({\bf
Set}^r)}}
\big(F\circ\amalg\big)\big[(\ff\vert_{\O^\prime},
\ff\vert_{\O - \O^\prime})\big]
\big(\eta\big(F_\Eta[\O^\prime] \times
 F_\Eta^{(K-1)}[\O - \O^\prime]\big)\big)\\[2mm]
 &=
\underset{\O^\prime\subseteq\O}{\bigcup_{\O^\prime\in\Obj({\bf
Set}^r)}} 
 \eta\big(F[\ff\vert_{\O^\prime}](F_\Eta[\O^\prime])\times
 F[\ff\vert_{\O-\O^\prime}]
(F_\Eta^{(K-1)}[\O-\O^\prime])\big)\\[2mm] 
&= \underset{\widetilde\O'\subseteq\widetilde\O}{\bigcup_{\widetilde\O'\in\Obj({\bf Set}^r)}}
\eta\big(F_\Eta[\widetilde\O'] \times F_\Eta^{(K-1)}[\widetilde\O -
\widetilde\O']\big)\\[2mm] &= F_\Eta^{(K)}[\widetilde\O].
\end{align*}
\end{proof}

\begin{lemma} \label{lem:Feta1}
The functors $F_\Eta^{(1)}$ and $F_\Eta$ coincide.
\end{lemma}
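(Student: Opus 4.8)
The plan is to unwind the definition of $F_\Eta^{(1)}$ and reduce it to a single $\Eta$-bracketing against the empty structure $F[\pmb{\emptyset}]$, which will turn out to act as a unit. First I would observe that in the defining union
\[
F_\Eta^{(1)}[\O]=\underset{\O_1\subseteq\O}{\bigcup_{\O_1\in\Obj({\bf Set}^r)}}\eta\big(F_\Eta[\O_1]\times F_\Eta^{(0)}[\O-\O_1]\big),
\]
the factor $F_\Eta^{(0)}[\O-\O_1]$ is empty unless $\O-\O_1=\pmb{\emptyset}$, i.e.\ unless $\O_1=\O$; hence only the term $\O_1=\O$ survives, giving $F_\Eta^{(1)}[\O]=\eta\big(F_\Eta[\O]\times F[\pmb{\emptyset}]\big)$. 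For $\O=\pmb{\emptyset}$ both sides are empty (since $F_\Eta[\pmb{\emptyset}]=\emptyset$ by definition of $F_\Eta$), so I may assume $\O\neq\pmb{\emptyset}$ from now on.

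The heart of the argument is to show that bracketing against $F[\pmb{\emptyset}]$ changes nothing. Writing $F_\Eta[\O]=F[\O]-D$, where $D:=\bigcup_{\II\amalg\JJ=\O,\ \II\neq\pmb{\emptyset}\neq\JJ}\eta(F[\II]\times F[\JJ])$ denotes the decomposable part, I would combine the elementary identity $(A-B)\times C=(A\times C)-(B\times C)$ with the injectivity of the $\Eta$-maps (so that $\eta(X-Y)=\eta(X)-\eta(Y)$ for $Y\subseteq X$) to obtain
\[
\eta\big(F_\Eta[\O]\times F[\pmb{\emptyset}]\big)=\eta\big(F[\O]\times F[\pmb{\emptyset}]\big)-\eta\big(D\times F[\pmb{\emptyset}]\big).
\]
It then remains only to identify the two sets on the right as $F[\O]$ and $D$, respectively.

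For the first, I would invoke Lemma~\ref{lem:emptyset}: since $|F[\pmb{\emptyset}]|=1$, the injective map $\eta_{(\O,\pmb{\emptyset})}\colon F[\O]\times F[\pmb{\emptyset}]\to F[\O]$ is a bijection, whence $\eta(F[\O]\times F[\pmb{\emptyset}])=F[\O]$ (exactly the surjectivity argument already used in the proofs of Lemmas~\ref{lem:emptyset} and \ref{lem:comm}). For the second, I would distribute $\eta(\,\cdot\,\times F[\pmb{\emptyset}])$ over the union defining $D$ and treat a single term $\eta\big(\eta(F[\II]\times F[\JJ])\times F[\pmb{\emptyset}]\big)$: by Lemma~\ref{lem:assoc3} ($3$-associativity) applied to the pairwise disjoint $\II,\JJ,\pmb{\emptyset}$, this equals $\eta\big(F[\II]\times\eta(F[\JJ]\times F[\pmb{\emptyset}])\big)$, and the inner factor collapses to $F[\JJ]$ by the same bijectivity argument; thus each term returns $\eta(F[\II]\times F[\JJ])$ and $\eta(D\times F[\pmb{\emptyset}])=D$. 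Combining, $\eta(F_\Eta[\O]\times F[\pmb{\emptyset}])=F[\O]-D=F_\Eta[\O]$, so $F_\Eta^{(1)}$ and $F_\Eta$ agree on objects; since on morphisms both are just restrictions of $F[\ff]$ (Lemmas~\ref{lem:funct} and \ref{lem:kfunct}), they coincide as functors.

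The main obstacle is establishing this \emph{unit behavior} of the empty structure, namely that $\eta(\,\cdot\,\times F[\pmb{\emptyset}])$ acts as the identity both on $F[\O]$ and on the decomposable part $D$; the rest is bookkeeping. The key point is that this is not a separately postulated unit axiom but a consequence of the cardinality constraint $|F[\pmb{\emptyset}]|=1$ (Lemma~\ref{lem:emptyset}) together with $3$-associativity (Lemma~\ref{lem:assoc3}), which allows me to push the empty factor inward past $F[\II]$ before collapsing it.
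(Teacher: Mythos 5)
Your proof is correct and follows essentially the same route as the paper's: reduce the union to the single term $\O_1=\O$, expand $F_\Eta[\O]$ via its definition, use injectivity of the $\Eta$-maps to distribute $\eta(\,\cdot\,\times F[\pmb{\emptyset}])$ over the set difference, and then collapse the empty factor using Lemma~\ref{lem:emptyset} together with Lemma~\ref{lem:assoc3}. The only cosmetic difference is that the paper dispatches the case $\O=\pmb{\emptyset}$ by citing \eqref{Eq:FetakBoundary} rather than the emptiness of $F_\Eta[\pmb{\emptyset}]$, which amounts to the same thing.
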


\begin{proof}
It suffices to show that $F_\Eta^{(1)}[\O] = F_\Eta[\O]$ for every
$\O\in\Obj({\bf Set}^r)$. By (\ref{Eq:FetakBoundary}), this
holds if $\O=\pmb{\emptyset}$, so assume that
$\O\neq\pmb{\emptyset}$. Then, using the definition of $F_\Eta^{(k)}$,
the injectivity of $\Eta$-maps,
Lemma~\ref{lem:assoc3} (3-associativity), 
and Lemma~\ref{lem:emptyset}, we have 
\begin{align*}
F_\Eta^{(1)}[\O] &=
\underset{\O_1\subseteq\O}{\bigcup_{\O_1\in\Obj({\bf Set}^r)}}
\eta\big(F_\Eta[\O_1] \times F_\Eta^{(0)}[\O - \O_1]\big)\\[2mm]
&= \eta\big(F_\Eta[\O] \times
F[\pmb{\emptyset}]\big)\\[2mm] &=
\eta\Bigg(\Bigg(F[\O]\hspace{.8mm}-\hspace{.8mm}
\underset{\II\neq\pmb{\emptyset}\neq\JJ}
{\underset{\II\amalg\JJ=\O}
{\bigcup_{(\II,\JJ)\in\Obj(\mathfrak{D}_r)}}}
\eta\big(F[\II] \times F[\JJ]\big)\Bigg)
\times F[\pmb{\emptyset}]\Bigg)\\[2mm] &= \eta\big(F[\O]
\times F[\pmb{\emptyset}]\big)\hspace{.8mm}-\hspace{.8mm}
\underset{\II\neq\pmb{\emptyset}\neq\JJ}
{\underset{\II\amalg\JJ=\O}
{\bigcup_{(\II,\JJ)\in\Obj(\mathfrak{D}_r)}}}
\eta\Big(\eta\big(F[\II] \times F[\JJ]\big)
\times F[\pmb{\emptyset}]\Big)\\[2mm] &=
F[\O]\hspace{.8mm}-\hspace{.8mm}
\underset{\II\neq\pmb{\emptyset}\neq\JJ}
{\underset{\II\amalg\JJ=\O}
{\bigcup_{(\II,\JJ)\in\Obj(\mathfrak{D}_r)}}}
\eta\big(F[\II] \times F[\JJ]\big)\\[2mm] &=
F_\Eta[\O],
\end{align*}
proving our claim.
\end{proof}

In the next lemma, we require again the concept of a base point, which
was introduced just before Lemma~\ref{lem:Fzerl}.

\begin{lemma} \label{lem:Fkzerl}
For every non-empty $\O\in\Obj({\bf Set}^r)$, each choice
of base point $(\omega,\rho)\in\O$, and every integer $k\geq1$, we
have
\begin{equation}
\label{Eq:FEtaKDec} 
F_\Eta^{(k)}[\O] =
\underset{(\omega,\rho)\in\O_1\subseteq\O} {\coprod_{\O_1\in\Obj({\bf
Set}^r)}} \eta\big(F_\Eta[\O_1] \times
F_\Eta^{(k-1)}[\O-\O_1]\big).
\end{equation}
\end{lemma}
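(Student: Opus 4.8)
The plan is to establish two separate facts: that the base-point–restricted union on the right-hand side of \eqref{Eq:FEtaKDec} already exhausts $F_\Eta^{(k)}[\O]$, and that this union is disjoint. The inclusion of the right-hand side into $F_\Eta^{(k)}[\O]$ is immediate, since the terms with $(\omega,\rho)\in\O_1$ are among all the terms appearing in the defining union \eqref{eq:Fetakdef}. The disjointness is also cheap: because $F_\Eta^{(k-1)}[\O-\O_i]\subseteq F[\O-\O_i]$, each term $\eta\big(F_\Eta[\O_i]\times F_\Eta^{(k-1)}[\O-\O_i]\big)$ is contained in $\eta\big(F_\Eta[\O_i]\times F[\O-\O_i]\big)$, and Lemma~\ref{lem:Fdisj} already asserts that the latter sets are pairwise disjoint as $\O_i$ ranges over the subsets of $\O$ containing the base point. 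Hence the smaller sets are disjoint as well, justifying the coproduct notation.

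The substance therefore lies in the reverse inclusion: every $x\in F_\Eta^{(k)}[\O]$ must lie in a term whose block $\O_1$ contains the base point. First I would unroll the recursion \eqref{eq:Fetakdef} completely. Since $F_\Eta[\pmb\emptyset]=\emptyset$ and $F_\Eta^{(1)}=F_\Eta$ (Lemma~\ref{lem:Feta1}), repeated substitution expresses $F_\Eta^{(k)}[\O]$ as the union, over all ordered partitions $\O=\O_1\amalg\dots\amalg\O_k$ of $\O$ into $k$ non-empty blocks, of the right-nested $\Eta$-bracketing
\[
\eta\big(F_\Eta[\O_1]\times\eta\big(F_\Eta[\O_2]\times\cdots\times \eta\big(F_\Eta[\O_{k-1}]\times F_\Eta[\O_k]\big)\cdots\big)\big).
\]
Given $x$ in this union, it lies in one such bracketing for a particular ordered partition; let $\O_j$ be the unique block with $(\omega,\rho)\in\O_j$. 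Now Lemma~\ref{lem:Feta-assoc} ($m$-permutability for $(F_\Eta,\Eta)$) states that all $\Eta$-bracketings of $F_\Eta[\O_1],\dots,F_\Eta[\O_k]$ coincide; in particular the displayed set equals the bracketing in which $\O_j$ is pulled to the front, namely $\eta\big(F_\Eta[\O_j]\times B\big)$, where $B$ denotes the right-nested bracketing of the remaining $k-1$ blocks and hence satisfies $B\subseteq F_\Eta^{(k-1)}[\O-\O_j]$. Thus $x\in\eta\big(F_\Eta[\O_j]\times F_\Eta^{(k-1)}[\O-\O_j]\big)$ with $(\omega,\rho)\in\O_j$, which is exactly a term on the right-hand side of \eqref{Eq:FEtaKDec}.

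The only genuinely delicate point is the reordering step, and it is precisely here that the earlier work pays off. A naive attempt to re-bracket $x$ using only $m$-permutability for $(F,\Eta)$ (Lemma~\ref{lem:assoc}) would move the base-point block to the front at the level of $F$-images, but would destroy the bookkeeping of which factors are ``indecomposable'' (lie in $F_\Eta$) and which are composite; one would be left knowing only that $x\in\eta\big(F[\O_j]\times F[\O-\O_j]\big)$, with no control over the $F_\Eta$-refinement. It is the stronger statement of Lemma~\ref{lem:Feta-assoc}, that permutability holds already for bracketings of the $F_\Eta$-images themselves, that permits the reordering while retaining the $F_\Eta$-structure of every factor. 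Once this is granted, the remainder is routine combinatorial bookkeeping, with no further appeal to Axiom~(D1) beyond what is already packaged into Lemmas~\ref{lem:Feta-assoc} and \ref{lem:Fdisj}.
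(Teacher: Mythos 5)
Your proposal is correct and follows essentially the same route as the paper: disjointness via Lemma~\ref{lem:Fdisj} after enlarging $F_\Eta^{(k-1)}[\O-\O_1]$ to $F[\O-\O_1]$, the unrolled expression of $F_\Eta^{(k)}[\O]$ as a union of right-nested $\Eta$-bracketings over ordered partitions, and Lemma~\ref{lem:Feta-assoc} to pull the base-point block to the front. Your element-wise phrasing of the reordering step and your explicit remark on why $m$-permutability for $(F_\Eta,\Eta)$ rather than for $(F,\Eta)$ is needed are merely presentational differences from the paper's set-level substitution argument.
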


\begin{proof}
The fact that the terms on the right-hand side of
(\ref{Eq:FEtaKDec}) are pairwise disjoint follows from Lemma~\ref{lem:Fdisj},
since a term $\eta\big(F_\Eta[\O_1] \times
F_\Eta^{(k-1)}[\O-\O_1]\big)$ is contained in the larger set
$\eta\big(F_\Eta[\O_1] \times F[\O-\O_1]\big)$. 

An immediate induction using the definition of $F_\Eta^{(m)}$ shows that,
for all $m\ge2$, we have
\begin{equation} \label{eq:Fm}
F_\Eta^{(m)}[\O]=
\underset{\O_1\amalg\cdots\amalg\O_m=\O}{\bigcup_{\O_1,\dots,\O_m\in\Obj({\bf
Set}^r)}}\hspace{.8mm}\eta\big(F_\Eta[\O_1]\times\eta\big(F_\Eta[\O_2]
\times\cdots\times\eta\big(F_\Eta[\O_{m-1}]\times
F_\Eta[\O_m]\big)\cdots\big).
\end{equation}

We first consider the case where $k=1$.
Here, by definition of $F_\Eta^{(0)}[\pmb\emptyset]$, the only
contribution to the union on the right-hand side of
\eqref{Eq:FEtaKDec} arises for $\O_1=\O$. In that situation, we have
\begin{equation*} 
 \eta\big(F_\Eta[\O] \times
F_\Eta^{(0)}[\pmb\emptyset]\big)
=\eta\big(F_\Eta[\O] \times
F[\pmb\emptyset]\big).
\end{equation*}
However, this is also the only contribution on the right-hand side of
the definition of $F_\Eta^{(1)}$ given in \eqref{eq:Fetakdef}, thus
proving \eqref{Eq:FEtaKDec} for $k=1$.

Now we consider the case where $k\ge2$.
If $k\ge3$, then, given $\O\in\Obj(\mathbf{Set}^r)$,
we substitute the right-hand side
of \eqref{eq:Fm} with $m=k-1$ in \eqref{Eq:FEtaKDec}. 
As a result, we obtain
\begin{equation} \label{eq:Feta-basepoint}
\underset{(\omega,\rho)\in\O_1}
{\underset{\O_1\amalg\O_2\amalg\cdots\amalg\O_k=\O}
{\bigcup_{\O_1,\O_2,\dots,\O_k\in\Obj({\bf
Set}^r)}}}
\eta\big(F_\Eta[\O_1]\times\eta\big(F_\Eta[\O_2]
\times\cdots\times\eta\big(F_\Eta[\O_{k-1}]\times
F_\Eta[\O_k]\big)\cdots\big)
\end{equation}
for the right-hand side of \eqref{Eq:FEtaKDec}.
We note that Expression~\eqref{eq:Feta-basepoint} also agrees with
the right-hand side of \eqref{Eq:FEtaKDec} for $k=2$ (taking into
account the fact
that we already know that the union on the right-hand side of
\eqref{Eq:FEtaKDec} is a disjoint union).

Expression~\eqref{eq:Feta-basepoint}
is almost \eqref{eq:Fm} with $m=k$, except that $\O_1$ is
distinguished by having to contain the given base point
$(\omega,\rho)$. However, by Lemma~\ref{lem:Feta-assoc}
($m$-permutability for $(F_\Eta,\Eta)$), the ordering
of $\O_1,\O_2,\dots,\O_k$ in the $\Eta$-bracketing in the union on the
right-hand side of \eqref{eq:Feta-basepoint} is of no relevance. Thus,
the restriction that $(\omega,\rho)\in\O_1$ can be dropped. This 
shows that the right-hand side of \eqref{Eq:FEtaKDec} equals
$F_\Eta^{(k)}[\O]$, as claimed.
\end{proof}

\section{Proofs of Propositions~\ref{lem:F->Fk} and \ref{prop:Disjoint},
and of Theorem~\ref{Thm:MainThm2}}
\label{sec:main2}

We begin this section with the proof of Proposition~\ref{lem:F->Fk}.
With
Lemma~\ref{lem:Fkzerl} in hand, we are finally 
in the position to also establish Proposition~\ref{prop:Disjoint}. 
Theorem~\ref{Thm:MainThm2} is then a
simple consequence of an identity on which the proof of
Proposition~\ref{prop:Disjoint} rests (see \eqref{eq:GF-Fetak} below).
Although the property expressed in this proposition is of a
structural nature, our proof relies in fact on a counting argument.
It would be desirable to find an alternative approach more in keeping
with the actual nature of Proposition~\ref{prop:Disjoint}.

\begin{proof}[Proof of Proposition~{\em\ref{lem:F->Fk}}]
We use induction on $\|\O\|$, where $\|\,.\,\|$ has been defined in
\eqref{eq:omabs}. By (\ref{Eq:FetakBoundary}) and the
definition of $F_\Eta^{(0)}$, the statement holds if $\|\O\|=0$,
that is, if $\O=\pmb{\emptyset}$. Let $\O$ be such that
$\|\O\|=N$ for some integer $N>0$, and suppose that
(\ref{Eq:FOmegaFilt}) holds for all $\O'\in\Obj(\mathbf{Set}^r)$ 
of norm strictly less
than $N$. Then we have $\O\neq\pmb{\emptyset}$, and
therefore
\begin{align*}
\bigcup_{k\geq0} F_\Eta^{(k)}[\O] &= \bigcup_{k\geq1}
F_\Eta^{(k)}[\O]\\[2mm] &=
\bigcup_{k\geq1}\hspace{.8mm}
\underset{\O_1\subseteq\O}{\bigcup_{\O_1\in\Obj({\bf
Set}^r)}} \eta\big(F_\Eta[\O_1] \times
F_\Eta^{(k-1)}[\O-\O_1]\big)\\[2mm] &=
\underset{\O_1\subseteq\O}{\bigcup_{\O_1\in\Obj({\bf Set}^r)}}
\eta\bigg(F_\Eta[\O_1] \times \Big(\bigcup_{k\geq1}
F_\Eta^{(k-1)}[\O-\O_1]\Big)\bigg)\\[2mm] &=
\underset{\pmb{\emptyset}\neq\O_1\subseteq\O}{\bigcup_{\O_1\in\Obj({\bf
Set}^r)}} \eta\bigg(F_\Eta[\O_1] \times \Big(\bigcup_{k\geq0}
F_\Eta^{(k)}[\O-\O_1]\Big)\bigg)\\[2mm] &=
\underset{\O_1\subseteq\O}{\bigcup_{\O_1\in\Obj({\bf Set}^r)}}
\eta\big(F_\Eta[\O_1] \times F[\O-\O_1]\big)\\[2mm] &= F[\O].
\end{align*}
Here, we have used Lemma~\ref{lem:Fzerl} for the last equality, and the inductive
hypothesis in the second but last step (here it is important that
$\O_1\ne\pmb\emptyset$ in order to guarantee that $\|\O-\O_1\|<\|\O\|$).
\end{proof}

\begin{proof}[Proof of Proposition~{\em\ref{prop:Disjoint}} and 
of Theorem~{\em\ref{Thm:MainThm2}}]
For $k\ge0$,
let us define the generating function for $F_\Eta^{(k)}$ by
$$
\GF_{F_\Eta^{(k)}}(z_1,\ldots,z_r) :=
\sum_{n_1,\ldots,n_r\geq0} \sum_{x\in F_\Eta^{(k)}[([n_1],\dots,[n_r])]}
w_{([n_1],\dots,[n_r])}(x)\,
\frac {z_1^{n_1} \cdots
z_r^{n_r}} {n_1!\cdots n_r!}.
$$
Again, in the sequel, we shall suppress the indices to weights $w$ for better
readability, the indices always being clear from the context.

The first step consists in showing that 
\begin{equation} \label{eq:GF-Fetak} 
\GF_{F_\Eta^{(k)}}(z_1,\ldots,z_r) =\frac {1} {k!}
\big(\GF_{F_\Eta}(z_1,\ldots,z_r) \big)^k.
\end{equation}
By definition of $F_\Eta^{(0)}$, the left-hand side of
\eqref{eq:GF-Fetak} equals $1$, so that \eqref{eq:GF-Fetak}
holds for $k=0$. Therefore, we may in the sequel assume that $k\ge1$. 

We now proceed in a manner similar to the proof of
Theorem~\ref{Thm:MainThm1} given in Section~\ref{sec:main1}.
Here, however, we use Lemma~\ref{lem:Fkzerl} instead of Lemmas~\ref{lem:Fzerl}
and \ref{lem:Fdisj}, and we also need the functoriality of
$F_\Eta^{(m)}$ for $m=0,1,2,\dots$ established in
Lemma~\ref{lem:kfunct}. In this way, we obtain
from \eqref{Eq:FEtaKDec} the identity
\begin{multline}
\label{Eq:psiFEtaEval} 
n_\rho\cdot w\big(F_\Eta^{(k)}[([n_1],\ldots, [n_r])]\big) =
{\sum_{0\leq\mu_i\leq n_i\,(1\leq i\leq
r)}}\,
\mu_\rho \bigg(\prod_{1\leq i\leq r}
\binom{n_i}{\mu_i}\bigg) 
w\big(F_\Eta[([\mu_1],\ldots,
[\mu_r])]\big)\\
\kern7cm
\cdot w\big(F_\Eta^{(k-1)}
[([n_1-\mu_1] ,\ldots,
[n_r-\mu_r])]\big).
\end{multline}
Fixing $\rho\in[r]$, multiplying both sides of
(\ref{Eq:psiFEtaEval}) by
\[
 {z_1^{n_1} \cdots z_{\rho-1}^{n_{\rho-1}}
z_\rho^{n_\rho-1}z_{\rho+1}^{n_{\rho+1}}
\cdots z_r^{n_r}} /
{(n_1! \cdots n_r!)},
\]
and summing over all tuples $(n_1, \ldots,n_r)\in\N_0^r$, gives
\begin{multline}
\label{Eq:PsiFEtaEval1}
{\sum_{n_1,\ldots,n_r\geq0}}\,
w\big(F_\Eta^{(k)}[([n_1],\ldots,[n_r])]\big)\,\frac { z_1^{n_1} \cdots
 z_{\rho-1}^{n_{\rho-1}}
z_\rho^{n_\rho-1}z_{\rho+1}^{n_{\rho+1}}
\cdots z_r^{n_r}} {n_1! \cdots n_{\rho-1}!\,
(n_\rho-1)!\,n_{\rho+1}!
\cdots n_r!}\\ 
\kern-4cm =
{\sum_{n_1,\ldots,n_r\geq0}}\
{\sum_{0\leq\mu_i\leq n_i\,(1\leq i\leq
r)}}\,
\frac{w\big(F_\Eta[([\mu_1],\ldots,[\mu_r])]\big)}
{\mu_1! \cdots \mu_{\rho-1}!\,
(\mu_\rho-1)!\, \mu_{\rho+1}!\cdots \mu_r!}\\
\cdot\frac{w\big(F_\Eta^{(k-1)}[([n_1-\mu_1],
\ldots,[n_r-\mu_r])]\big)}{(n_1-\mu_1)! \cdots
(n_r-\mu_r)!}\,
z_1^{n_1} \cdots  z_{\rho-1}^{n_{\rho-1}}
z_\rho^{n_\rho-1}z_{\rho+1}^{n_{\rho+1}}
\cdots z_r^{n_r},
\end{multline}
where, again, $1/(-1)!$ has to be interpreted as $0$.
The left-hand side equals
\[
\frac{\partial \GF_{F_\Eta^{(k)}}}{\partial z_\rho},
\]
while the right-hand side is identified as
\[
\frac{\partial \GF_{F_\Eta}}{\partial z_\rho} \GF_{F_\Eta^{(k-1)}},
\]
whence the equations
\begin{equation} \label{eq:Fetadiff} 
\frac{\partial \GF_{F_\Eta^{(k)}}}{\partial z_\rho}
=
\frac{\partial \GF_{F_\Eta}}{\partial z_\rho} \GF_{F_\Eta^{(k-1)}},
\quad \quad 1\le \rho\le r.
\end{equation}
Assuming inductively that 
$$\GF_{F_\Eta^{(k-1)}}=\frac {1} {(k-1)!}
\big(\GF_{F_\Eta}\big)^{k-1},$$
we infer from \eqref{eq:Fetadiff} that
$$
\GF_{F_\Eta^{(k)}}=\frac {1} {k!}\big(\GF_{F_\Eta}\big)^{k}+
C,
$$
where $C$ is independent of $z_1,z_2,\dots,z_r$. 
Making use of the facts that $\GF_{F_\Eta^{(k)}}(0,\ldots,0) =
0$ (since $k\ge1$) and that $\GF_{F_\Eta}(0,\ldots,0) = 0$,
we see that $C=0$, which proves \eqref{eq:GF-Fetak}.

On the other hand, by Theorem~\ref{Thm:MainThm1}, we know that
$$\GF_F(z_1,z_2,\dots,z_r)=\exp\big(\GF_{F_\Eta}(z_1,z_2,\dots,z_r)\big),$$
or, equivalently,
\begin{equation} \label{eq:GF-F-Fk} 
\GF_F(z_1,z_2,\dots,z_r)=
\sum _{k\ge0} ^{}\frac {1} {k!}
\big(\GF_{F_\Eta}(z_1,z_2,\dots,z_r)\big)^{k}.
\end{equation}
If there were a non-empty intersection between $F_\Eta^{(k_1)}[\O]$
and $F_\Eta^{(k_2)}[\O]$, for some $k_1,k_2$ with $k_1<k_2$ and some
$\O\in\Obj(\mathbf{Set}^r)$, then 
Proposition~\ref{lem:F->Fk} would 
contradict \eqref{eq:GF-F-Fk} and \eqref{eq:GF-Fetak}.
This proves the assertion of Proposition~\ref{prop:Disjoint}.

\medskip
The proof of Theorem~{\ref{Thm:MainThm2}} is now easily completed.
By definition of $\widetilde \GF_{F}(z_1,\ldots,z_r,y)$, we have
$$
\widetilde \GF_{F}(z_1,\ldots,z_r,y)=
\sum _{k\ge0} ^{}y^k\GF_{F_\Eta^{(k)}}(z_1,\ldots,z_r).$$
If we now substitute \eqref{eq:GF-Fetak}, then we immediately
obtain \eqref{Eq:MainThm21}. Identity~\eqref{Eq:MainThm22} results
from using Theorem~\ref{Thm:MainThm1} to express 
$\GF_{F_\Eta}(z_1,\ldots,z_r)$ in terms of
$\GF_{F}(z_1,\ldots,z_r)$ and substituting the result in
\eqref{Eq:MainThm21}. 
\end{proof}

\section{Illustrations, I: Three examples}
\label{sec:illust}

We give here three illustrations for the application of our theory.
In the first and second example below, bipartite graphs are considered. 
Example~\ref{ex:1} is, in some sense, ``standard," since it addresses the
case where the composition operator $\Eta$ consists in ``putting
objects together," so that the combinatorial objects in our 
(in this case, $2$-sort) species are sets of indecomposable objects,
a situation which is well covered by classical species
theory. In Example~\ref{ex:2}, however, the composition operator
$\Eta$ is different, ``non-standard," so that classical species theory
does not apply, but our (extension of species) theory does.
On the other hand, we shall see in Section~\ref{sec:commeta} that this
composition operator is pointwise associative and commutative
(for the precise definition see
\eqref{eq:passoc} and \eqref{eq:pcomm}), and that this is equivalent
to the fact that this case is also covered by Menni's theory in
\cite{MennAA}. As a consequence, this family of composition
operators is closely related to the classical operation of ``putting
objects together." (See Theorem~\ref{thm:commeta} for the precise
statement.) Our last example in this section, Example~\ref{ex:3},
presents an example of a composition operator that is neither pointwise
associative nor pointwise commutative, in other words, a composition
operator that is not covered by Menni's theory in \cite{MennAA}.
A particular aspect demonstrated by Examples~\ref{ex:1} and \ref{ex:2}
that we want to highlight is that
composition operators need not be unique.

\begin{example}[\sc Bipartite graphs I] \label{ex:1}
Let the $2$-sort species $F: {\bf Set}^2\rightarrow{\bf Set}$ be
defined by 
$$F[\O]=F[(\Omega^{(1)},\Omega^{(2)})]:=
2^{\Omega^{(1)}\times \Omega^{(2)}},\quad 
\O=(\Omega^{(1)},\Omega^{(2)})\in\Obj(\mathbf{Set}^2).$$
Thus, $F[(\Omega^{(1)},\Omega^{(2)})]$ can be considered as set of all
bipartite graphs, where the set of ``white" vertices is
$\Omega^{(1)}$ and the set of ``black" vertices is
$\Omega^{(2)}$. For $(\O_1,\O_2)\in\Obj(\mathfrak D_2)$,
$b_1\in F[\O_1]$ and $b_2\in F[\O_2]$, put
$$\eta_{(\O_1,\O_2)}\big((b_1,b_2)\big):=b_1\amalg b_2.$$
This means that $\eta_{(\O_1,\O_2)}$ merely forms the disjoint union
of the bipartite graphs $b_1$ and $b_2$.
Then it is not difficult to see that $\Eta$ is a natural
transformation satisfying (D1). Moreover, $F_\Eta[\O]$ consists of the
{\it connected\/} bipartite graphs with bipartition
$\O=(\Omega^{(1)},\Omega^{(2)})$.

For a weight, we choose $\Lambda=\mathbb Z[t]$ and
$$w_\O(b):=t^{|b|},\quad b\in F[\O].$$
Again, it is not difficult to see that $\ww$ satisfies 
Axioms~(W0)--(W2); that
is, $\ww$ is a $\Lambda$-weight on $(F,\Eta)$. 

Theorem~\ref{Thm:MainThm1} then says that
\begin{equation} \label{eq:bip1} 
\GF_F(z_1,z_2) =
\exp\big(\GF_{F_\Eta}(z_1,z_2)\big),
\end{equation}
where
$$
\GF_F(z_1,z_2) =
\sum_{n_1,n_2\geq0}\ \sum_{b\in F[([n_1],[n_2])]}\,
t^{|b|}
\frac { z_1^{n_1}z_2^{n_2}} {n_1!\, n_2!}
$$
and
$$
\GF_{F_\Eta}(z_1,z_2) =
\sum_{n_1,n_2\geq0}\ \sum_{b\in F_\Eta[([n_1],[n_2])]}\,
t^{|b|}
\frac {z_1^{n_1}z_2^{n_2}} {n_1!\,n_2!}.
$$
However, by straightforward counting, one sees that
$$
\GF_{F}(z_1,z_2) =
\sum_{n_1,n_2\geq0} (1+t)^{n_1n_2}
\frac { z_1^{n_1}z_2^{n_2}} {n_1!\, n_2!}.
$$
From \eqref{eq:bip1}, it then follows that the generating function for
connected bipartite graphs is given by
$$
\GF_{F_\Eta}(z_1,z_2) =
\log\left(\sum_{n_1,n_2\geq0} (1+t)^{n_1n_2}
\frac { z_1^{n_1}z_2^{n_2}} {n_1!\, n_2!}\right),
$$
while \eqref{Eq:MainThm22} implies that
\begin{align} \notag
\widetilde\GF_{F}(z_1,z_2) &:=
\sum_{n_1,n_2\geq0}\ \sum_{b\in F[([n_1],[n_2])]}\,
t^{|b|}y^{\#(\text{connected components of }b)}
\frac { z_1^{n_1}z_2^{n_2}} {n_1!\, n_2!}\\
\label{eq:bip2} 
&=
\left(\sum_{n_1,n_2\geq0} (1+t)^{n_1n_2}
\frac { z_1^{n_1}z_2^{n_2}} {n_1!\, n_2!}\right)^y.
\end{align}

This example can be considered as a $2$-dimensional analogue of the
example in \cite[Sec.~3]{DM} (with the first of the two composition
operators considered there). The knowledgeable reader will recognise 
\eqref{eq:bip2} as the exponential generating function for the Tutte
polynomials of complete bipartite graphs (cf.\ e.g.\ 
\cite[Eq.~(3.10)]{ScSoAA}).
\end{example}

\begin{example}[\sc Bipartite graphs II] \label{ex:2}
Let $F: {\bf Set}^2\rightarrow{\bf Set}$ be
as in Example~\ref{ex:1}.
Here, for
$(\O_1,\O_2)\in\Obj(\mathfrak D_2)$,
where $\O_i=(\Omega_i^{(1)},\Omega_i^{(2)})$, $i=1,2$,
for $b_1\in F[\O_1]$ and $b_2\in F[\O_2]$, we put
$$\eta'_{(\O_1,\O_2)}\big((b_1,b_2)\big):=b_1\amalg b_2\amalg
\left(\Omega_1^{(1)}\times \Omega_2^{(2)}\right)\amalg
\left(\Omega_1^{(2)}\times \Omega_2^{(1)}\right).$$
The graph $\eta'_{(\O_1,\O_2)}\big((b_1,b_2)\big)$ 
can be considered as 
a kind of bipartite completion of the disjoint union
of $b_1$ and $b_2$.
Again, it is not difficult to see that $\Eta'$ is a natural
transformation satisfying (D1). Moreover, $F_{\Eta'}(\O)$ consists of the
{\it complements\/} of connected bipartite graphs with bipartition
$\O=(\Omega^{(1)},\Omega^{(2)})$, where the complement $b^c$ of a
bipartite graph $b\in F[\O]$
is defined as $b^c:=\big(\Omega^{(1)}\times \Omega^{(2)}\big)-b$.

If we now were to choose the weight of Example~\ref{ex:1}, then Axiom~(W2)
would be violated. Instead, with $\Lambda=\mathbb Z[t]$, we set
$$w'_\O(b):=t^{|\Omega^{(1)}|\cdot |\Omega^{(2)}|-|b|},\quad b\in 
F[\O]=F\big[(\Omega^{(1)},\Omega^{(2)})\big].$$
Then it is not difficult to see that $\ww'$ does satisfy 
Axioms~(W0)--(W2); that
is, $\ww'$ is a $\Lambda$-weight on $(F,\Eta')$. 

Theorem~\ref{Thm:MainThm1} then says that
\begin{equation} \label{eq:bip3} 
\GF_F(z_1,z_2) =
\exp\big(\GF_{F_{\Eta'}}(z_1,z_2)\big),
\end{equation}
where
$$
\GF_F(z_1,z_2) =
\sum_{n_1,n_2\geq0}\ \sum_{b\in F[([n_1],[n_2])]}\,
t^{n_1n_2-|b|}
\frac { z_1^{n_1}z_2^{n_2}} {n_1!\, n_2!}
$$
and
$$
\GF_{F_{\Eta'}}(z_1,z_2) =
\sum_{n_1,n_2\geq0}\ \sum_{b\in F_{\Eta'}([n_1],[n_2])}\,
t^{n_1n_2-|b|}
\frac {z_1^{n_1}z_2^{n_2}} {n_1!\,n_2!}.
$$
Again, by straightforward counting, one sees that
$$
\GF_{F}(z_1,z_2) =
\sum_{n_1,n_2\geq0} (1+t)^{n_1n_2}
\frac { z_1^{n_1}z_2^{n_2}} {n_1!\, n_2!},
$$
and we obtain the formulae
$$
\GF_{F_{\Eta'}}(z_1,z_2) =
\log\left(\sum_{n_1,n_2\geq0} (1+t)^{n_1n_2}
\frac { z_1^{n_1}z_2^{n_2}} {n_1!\, n_2!}\right)
$$
and
\begin{align} \notag
\widetilde\GF_{F}(z_1,z_2) &:=
\sum_{n_1,n_2\geq0}\ \sum_{b\in F[([n_1],[n_2])]}\,
t^{n_1n_2-|b|}
y^{\#(\text{connected components of }b^c)}
\frac { z_1^{n_1}z_2^{n_2}} {n_1!\, n_2!}\\
\label{eq:bip4} 
&=
\left(\sum_{n_1,n_2\geq0} (1+t)^{n_1n_2}
\frac { z_1^{n_1}z_2^{n_2}} {n_1!\, n_2!}\right)^y.
\end{align}

This example can be viewed as a $2$-dimensional analogue of the
example in \cite[Sec.~3]{DM} (with the second of the two composition
operators considered there).

The alert reader will have noticed that the $\Eta'$-maps could have been
alternatively defined by
\begin{equation} \label{eq:eta'c}
\eta'_{(\O_1,\O_2)}\big((b_1,b_2)\big):=\big(b_1^c\amalg b_2^c\big)^c,
\end{equation}
where the complements have to be taken in the appropriate complete
bipartite graphs. This construction will be generalised in
Section~\ref{sec:commeta}. 
\end{example}

\begin{example}[\sc Binary functions] \label{ex:3}
Let the ($1$-sort) species $F: {\bf Set}\rightarrow{\bf Set}$ be
defined by 
$$F[\O]:=
\{0,1\}^{\O},\quad 
\O\in\Obj(\mathbf{Set}).$$
For $(\O_1,\O_2)\in\Obj(\mathfrak D_1)$,
$f_1\in F[\O_1]$, and $f_2\in F[\O_2]$, put
$$\big(\eta_{(\O_1,\O_2)}
\big((f_1,f_2)\big)\big)(\omega):=\begin{cases} 
f_1(\omega),&\text{if }\omega\in\O_1,\\
1-f_2(\omega),&\text{if }\omega\in\O_2.
\end{cases}$$
Then it is easy to see that $\Eta$ is a natural
transformation satisfying (D1). Moreover, 
$$F_\Eta[\O]=\begin{cases} \{0_\O,1_\O\},&\text{if }|\O|=1,\\
\{\},&\text{otherwise,}\end{cases}$$
where $0_\O$ and $1_\O$ are the constant functions
on $\O$ taking the value $0$ and $1$, respectively.
We note that, in contrast to Examples~\ref{ex:1} and \ref{ex:2}, 
the $\Eta$-maps of the present example are 
{\it pointwise non-associative and non-commutative}
(cf.\ Section~\ref{sec:commeta}); to be precise, in general we have
$$
\eta_{(\O_1\amalg\O_2,\O_3)}
\Big(\big(\eta_{(\O_1,\O_2)}\big((f_1,f_2)\big),f_3\big)\Big)\ne
\eta_{(\O_1,\O_2\amalg\O_3)}
\Big(\big(f_1,\eta_{(\O_2,\O_3)}\big((f_2,f_3)\big)\big)\Big)
$$
and
$$
\eta_{(\O_1,\O_2)}\big((f_1,f_2)\big)\ne 
\eta_{(\O_2,\O_1)}\big((f_2,f_1)\big).
$$

For the sake of completeness, we remark that,
choosing the trivial weighting
$$w_\O(f):=1,\quad f\in F[\O],$$
Theorem~\ref{Thm:MainThm1} yields the trivial identity
\begin{equation*} 
\GF_F(z) =
\sum _{n\ge0} ^{}2^n\frac {z^n} {n!}
=\exp\big(\GF_{F_\Eta}(z)\big)=\exp(2z).
\end{equation*}
The construction of this example can also be generalised to produce many 
more (multisort) species with
pointwise non-associative and non-commutative
composition operator, see Theorem~\ref{thm:Etwist} in 
Section~\ref{sec:commeta}.
\end{example}

\section{Illustrations, II: Magic squares}
\label{Sec:CMT1} 
The purpose of this section is to illustrate the increased flexibility
of our present multivariate setting. We show that a number of 
generating function identities for combinatorial matrices 
found scattered throughout the literature can be uniformly explained,
and generalised, in the context of our theory.

By a {\em combinatorial matrix} on
$\O=(\Omega^{(1)},\Omega^{(2)})\in\Obj({\bf Set}^2)$ we shall mean any
map
\[
m: \Omega^{(1)}\times\Omega^{(2)}\rightarrow\N_0.
\]
The pair of sets $\O$ is called the {\em support} of $m$. Let
$m_1, m_2$ be two combinatorial matrices with supports
$\O_1=(\Omega_1^{(1)},\Omega_1^{(2)})$ and
$\O_2=(\Omega_2^{(1)},\Omega_2^{(2)})$, respectively, and suppose
that $\O_1\cap\O_2=\pmb{\emptyset}$. Then we define their
{\em direct sum} $m=m_1\oplus m_2$ to be the combinatorial matrix
with support $\O:=\O_1\amalg\O_2$ given by
\[
m(\omega_1,\omega_2):=\begin{cases}
m_1(\omega_1,\omega_2),&(\omega_1,\omega_2)
\in\Omega_1^{(1)}\times\Omega_1^{(2)},\\
m_2(\omega_1,\omega_2),&(\omega_1,\omega_2)
\in\Omega_2^{(1)}\times\Omega_2^{(2)},\\
0,&\mbox{otherwise}.
\end{cases}
\]
A combinatorial matrix $m$ on $\O$ is termed 
$s$-{\em magic},\footnote{Strictly speaking, the correct term here would be
``$s$-semi-magic," since we do not require diagonals to sum up to $s$
as well, see e.g.\ \cite{BCCGAA}. 
However, we prefer the term ``$s$-magic" for the sake of brevity.} $s$
a positive integer, if
\[
\sum_{\omega_2\in\Omega^{(2)}} m(\omega_1,\omega_2) =
s,\quad\omega_1\in\Omega^{(1)},
\]
and
\[
\sum_{\omega_1\in\Omega^{(1)}} m(\omega_1,\omega_2)
=s,\quad\omega_2\in\Omega^{(2)}.
\]
Computing the sum of entries, we find that an $s$-magic matrix is
necessarily square, $|\Omega^{(1)}|=|\Omega^{(2)}|$.
The enumeration of $s$-magic squares has a long history, going back to
MacMahon \cite[\S404--419]{MacMAA}. A good account of the enumerative theory
of magic squares can be found in 
\cite[Sec.~4.6]{StanAP}, with many pointers to further literature.
For more recent work, see for instance \cite{BCCGAA,LoLYAA}.

For $s\in\N$ and
$\O\in \Obj({\bf Set}^2)$, denote by $F_s(\O)$ the set of all
$s$-magic matrices on $\O$, and by $\bar{F}_s(\O)$ the set of
those $s$-magic matrices on $\O$ which do not contain $s$ as an
entry. We thus have mappings
\[
F_s, \bar{F}_s: \Obj({\bf Set}^2) \rightarrow \Obj({\bf Set}),
\]
which we turn into functors $F_s, \bar{F}_s: {\bf
Set}^2\rightarrow{\bf Set}$ by assigning to a morphism
\[
\ff = (f_1,f_2): \O\rightarrow\widetilde\O
\]
in ${\bf Set}^2$ the map (denoted $F_s[\ff]$
respectively $\bar{F}_s[\ff]$) sending a combinatorial
matrix $m$ in the respective domain to $m\circ(f_1^{-1}\times
f_2^{-1})$. Moreover, given $s$ and a finite set $\Omega$, let
$F^\ast_s(\Omega)$ be the set of {\em symmetric} $s$-magic
matrices on $\O=(\Omega,\Omega)$;
that is, combinatorial matrices satisfying
\[
m(\omega_1,\omega_2) = m(\omega_2,\omega_1),\quad
(\omega_1,\omega_2)\in\Omega^2;
\]
and denote by $\bar{F}^\ast_s(\Omega)$ the subset of
$F^\ast_s(\Omega)$ consisting of those matrices with no entry
equal to $s$. Just as above, the maps
\[
F_s^\ast, \bar{F}_s^\ast: \Obj({\bf Set})\rightarrow \Obj({\bf Set})
\]
become functors $F_s^\ast, \bar{F}_s^\ast: {\bf
Set}\rightarrow{\bf Set}$ by assigning to a morphism
$f:\Omega\rightarrow\widetilde\Omega$ 
in ${\bf Set}$ the map sending a
combinatorial matrix $m$ in the respective domain to
$m\circ(f^{-1}\times f^{-1})$.

Next, given $s\in\N$, a pair $(\O_1,\O_2)\in\Obj(\mathfrak{D}_2)$,
 and a pair $(\Omega_1,\Omega_2)\in\Obj(\mathfrak{D}_1)$, the direct sum
construction provides us with injective maps
\begin{align*}
(\eta_{s})_{(\O_1,\O_2)}&: F_s(\O_1) \times F_s(\O_2) \rightarrow
F_s(\O_1\amalg \O_2),\\ 
(\bar{\eta}_s)_{(\O_1,\O_2)}&:
\bar{F}_s(\O_1) \times \bar{F}_s(\O_2) \rightarrow
\bar{F}_s(\O_1\amalg \O_2),\\
({\eta}_{s}^\ast)_{(\Omega_1,\Omega_2)}&:
F^\ast_s(\Omega_1) \times F^\ast_s(\Omega_2) \rightarrow
F^\ast_s(\Omega_1\amalg \Omega_2),\\
{(\bar{\eta}}_{s}^\ast)_{(\Omega_1,\Omega_2)}&:
\bar{F}_s^\ast(\Omega_1)\times
\bar{F}_s^\ast(\Omega_2)\rightarrow\bar{F}_s^\ast(\Omega_1\amalg\Omega_2).
\end{align*}
A certain amount of checking is required in order to convince
oneself that these definitions fit into the framework of
Theorems~\ref{Thm:MainThm1} and \ref{Thm:MainThm2}. 
The next lemma states the corresponding result. 
We leave its proof, which essentially amounts to a routine
verification, to the reader.

\begin{lemma}
\label{Lem:CombMat}
\begin{itemize}
\item[(i)] For each $s \in\N,$ the collection of maps
\[
\Eta_{s}=
\big((\eta_{s})_{(\O_1,\O_2)}\big)_{(\O_1,\O_2)\in\Obj(\mathfrak{D}_2)}
\]
is a natural transformation from the functor $F_s\times F_s$ to
the functor $F_s\circ\amalg$. Analogous statements hold for the
functors $\bar{F}_s, F_s^\ast, \bar{F}_s^\ast,$ and  the families
of maps
\begin{align*}
\bar{\Eta}_{s} &=
\big((\bar{\eta}_{s})_{(\O_1,\O_2)}\big)
_{(\O_1,\O_2)\in\Obj(\mathfrak{D}_2)},\\
{\Eta}_{s}^\ast &=
\big(({\eta}_{s}^\ast)
_{(\Omega_1,\Omega_2)}\big)_{(\Omega_1,\Omega_2)\in\Obj(\mathfrak{D}_1)},\\
{\bar{\Eta}}_{s}^\ast &=
\big(({\bar{\eta}}_{s}^\ast)
_{(\Omega_1,\Omega_2)}\big)_{(\Omega_1,\Omega_2)\in\Obj(\mathfrak{D}_1)}.
\end{align*}
\item[(ii)] For each $s,$ the pair $(F_s,\Eta_{s})$ satisfies
Axiom~{\em (D1),} an analogous statement holding for each of the
other pairs $(\bar{F}_s,\bar{\Eta}_{s}),$
$(F_s^\ast,{\Eta}_{s}^\ast),$ and
$(\bar{F}^\ast_s,{\bar{\Eta}}_{s}^\ast)$.
\end{itemize}
\end{lemma}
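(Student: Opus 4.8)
The plan is to handle the four pairs $(F_s,\Eta_s)$, $(\bar F_s,\bar\Eta_s)$, $(F_s^\ast,\Eta_s^\ast)$, $(\bar F_s^\ast,\bar\Eta_s^\ast)$ by one and the same argument, since they differ only in the side constraints imposed on the matrices (no entry equal to $s$; symmetry), each of which I would check along the way to be preserved by the direct-sum construction. The structural observation I would establish first, and use repeatedly, is that a matrix $m$ on $\O$ lies in $\eta_s\big(F_s(\O_1)\times F_s(\O_2)\big)$ precisely when $m$ is $s$-magic and \emph{vanishes on the two cross-blocks} of the partition $\O=\O_1\amalg\O_2$, that is, on $\Omega_1^{(1)}\times\Omega_2^{(2)}$ and on $\Omega_2^{(1)}\times\Omega_1^{(2)}$. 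This is just a reformulation of the definition of $\oplus$, together with the fact that the summands are recovered by restriction to the diagonal blocks, which also yields injectivity.

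For part~(i), I would reduce naturality \eqref{Eq:NatDiag} to the assertion that forming the direct sum commutes with relabelling rows and columns. A morphism $\ff=(f_1,f_2)$ of $\mathfrak D_2$ is a pair of componentwise bijections respecting the splitting into $\O_1$ and $\O_2$, and $F_s[\ff]$ acts on a matrix by the substitution $m\mapsto m\circ(f_1^{-1}\times f_2^{-1})$. Since $\ff$ preserves the two parts, relabelling the summands and then taking $\oplus$ produces exactly the same matrix (same entries moved to the same image positions, same zeros in the cross-blocks) as taking $\oplus$ and then relabelling, which is the commutativity of \eqref{Eq:NatDiag}. That $\eta_s$ has the asserted target is immediate: in $m_1\oplus m_2$ each row and each column sum equals the corresponding sum of a single summand plus zeros, so $m_1\oplus m_2$ is again $s$-magic. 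For $\bar F_s$ I would note that the only new entries created by $\oplus$ are zeros, and $0\neq s$ since $s\ge1$; for $F_s^\ast$ and $\bar F_s^\ast$, symmetry of $\oplus$ is clear.

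Part~(ii) is the substance. I would fix $\O=\O_1\amalg\O_2=\widetilde\O_1\amalg\widetilde\O_2$ and the common refinement $\O_{ij}=\O_i\cap\widetilde\O_j$. By the structural observation, the left-hand side of \eqref{Eq:DecompAx} is the set of $s$-magic $m$ on $\O$ vanishing on the cross-blocks of \emph{both} partitions, while the right-hand side is the set of $s$-magic $m$ that are block-diagonal for the four pieces $\O_{ij}$, nonzero only when row and column lie in the same piece, with each diagonal block itself $s$-magic. For the inclusion ``$\subseteq$'' I would take $m$ in the left-hand side and a position $(\omega_1,\omega_2)$ whose row and column pieces differ: if they differ in the first index the entry sits in a cross-block of the $\O$-partition, and if they agree in the first index but differ in the second it sits in a cross-block of the $\widetilde\O$-partition; either way $m(\omega_1,\omega_2)=0$. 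Hence $m$ is block-diagonal, and then each row sum of $m$ collapses to the row sum inside a single diagonal block, forcing that block to be $s$-magic. For the reverse inclusion ``$\supseteq$'', a block-diagonal $m$ with $s$-magic blocks vanishes on every cross-block of either partition (each such block straddles two distinct pieces $\O_{ij}$), and its restrictions to $\O_1$ and to $\O_2$ are $s$-magic, so $m$ belongs to both sets whose intersection is the left-hand side.

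The main obstacle I anticipate, and really the only step that is not pure formality, is the collapse argument in the ``$\subseteq$'' direction: once $m$ is known to vanish off the diagonal pieces, I must argue that the magic condition localises, so that each diagonal block inherits row and column sums equal to $s$ and is therefore genuinely $s$-magic. It is precisely here that the row/column-sum constraint interacts with the combinatorics of the two partitions; everything else is bookkeeping. The variants I would dispatch by routine additions: for $\bar F_s$ and $\bar F_s^\ast$ the property of having no entry equal to $s$ passes between $m$ and its blocks entry by entry (again using $s\ge1$, so that the off-block zeros are harmless), and for $F_s^\ast$ and $\bar F_s^\ast$ one runs the identical argument with a single index set $\Omega$ playing both coordinate roles, the two partitions now being partitions of $\Omega$ and the refinement pieces $\Omega_i\cap\widetilde\Omega_j$.
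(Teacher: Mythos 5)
Your proposal is correct. The paper itself gives no proof of this lemma (it explicitly leaves the verification to the reader as routine), and what you have written is precisely the verification that is intended: the key observation that the image of $(\eta_s)_{(\O_1,\O_2)}$ consists exactly of the $s$-magic matrices vanishing on the two cross-blocks of the partition (so that the summands are recovered by restriction, giving injectivity) reduces both naturality and Axiom~(D1) to the bookkeeping with the common refinement $\O_{ij}=\O_i\cap\widetilde\O_j$ that you carry out, and the localisation of the row/column-sum condition to the diagonal blocks --- the one non-formal step, which you correctly identify and justify --- is handled properly; the variants (no entry equal to $s$, symmetry) are indeed dispatched by the pointwise remarks you give.
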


It follows from Lemma~\ref{Lem:CombMat} and
Theorem~\ref{Thm:MainThm2}, that Equations (\ref{Eq:MainThm21}) and
(\ref{Eq:MainThm22}) hold for each of the pairs
$(F_s,\Eta_{s})$, $(\bar{F}_s,\bar{\Eta}_{s})$, $(F_s^\ast,
{\Eta}_{s}^\ast)$, and
$(\bar{F}^\ast,{\bar{\Eta}}_{s}^\ast)$; in particular,
we find that
\begin{align}
\widetilde\GF_{F_s}(z_1,z_2,y) &= \exp\big(y
\GF_{(F_s)_{\Eta_{s}}}(z_1,z_2)\big),\label{Eq:CombMat1}\\
\widetilde\GF_{\bar{F}_s}(z_1,z_2,y) &= \exp\big(y
\GF_{(\bar{F}_s)_{\bar\Eta_{s}}}(z_1,z_2)\big),\label{Eq:CombMat2}\\
\widetilde\GF_{F_s^\ast}(z,y) &= \exp\big(y
\GF_{(F_s^\ast)_{{\Eta}_{s}^\ast}}(z)\big),\label{Eq:CombMat3}\\
\widetilde\GF_{\bar{F}^\ast_s}(z,y) &=
\exp\big(y\GF_{(\bar{F}^\ast_s)_{
{\bar{\Eta}}_{s}^\ast}}(z)\big)\label{Eq:CombMat4}.
\end{align}
Note that in these identities the variable $y$ keeps track of the
number of indecomposable matrices into which the matrices which are
counted by the respective generating functions on the left-hand sides
can be decomposed.
Clearly, the generating functions occurring in \eqref{Eq:CombMat1} and
\eqref{Eq:CombMat2} can be viewed as formal power series in $z_1z_2$
and $y$; that is, $z_1z_2$ could be replaced by a single variable. 
However, we prefer to keep $z_1$ and $z_2$ separate, since this is
more in line with our general theory.

We note certain dependencies among the series
$\widetilde\GF_{F_s}$, $\widetilde\GF_{\bar{F}_s}$,
$\widetilde\GF_{F_s^\ast}$,
$\widetilde\GF_{\bar{F}^\ast_s}$; for
instance, we observe that an indecomposable $s$-magic matrix on
$([n_1],[n_2])$ cannot contain an entry equal to $s$, unless
$n_1=n_2=1$. It follows that
\begin{equation*}
\big|(F_s)_{\Eta_{s}}([n_1],[n_2])\big| = \begin{cases}
1+\big|(\bar{F}_s)_{\bar{\Eta}_{s}}([1],[1])\big|,& n_1=n_2=1,\\[2mm]
\big|(\bar{F}_s)_{\bar{\Eta}_{s}}([n_1],[n_2])\big|,&\mbox{otherwise},
                                      \end{cases}
\end{equation*}
and hence, by Equations~(\ref{Eq:CombMat1}) and
(\ref{Eq:CombMat2}),
\begin{equation}
\label{Eq:CombMat5} \widetilde\GF_{\bar{F}_s}(z_1,z_2,y)
=e^{-z_1z_2y}\,\widetilde\GF_{F_s}(z_1,z_2,y).
\end{equation}
Similarly, we have
\begin{equation}
\label{Eq:CombMat6}
\widetilde\GF_{\bar{F}_s^\ast}(z,y) =
e^{-y(z+z^2/2)} \widetilde\GF_{F_s^\ast}(z,y).
\end{equation}
Indeed, for $n=1,2$, there exist indecomposable symmetric $s$-magic
matrices on $([n],[n])$ containing an entry $s$:
\[
(s)\, \text{ and }\,
\left(\begin{matrix}0&s\\s&0\end{matrix}\right).
\]
Now let $m$ be a symmetric $s$-magic matrix on $([n],[n])$ with
$n\geq3$, and suppose that $m$ contains an entry equal to $s$ in
position $(i,j)$. Then, if $i=j$, we have $m=(s)\oplus m^\prime$,
where $m^\prime$ has support $([n]-\{i\},[n]-\{i\})$. If, on the
other hand, $i\neq j$, then, by symmetry, $m$ also contains $s$ in
position $(j,i)$, and we find that $m$ splits as
\[
m=\left(\begin{matrix}0&s\\s&0\end{matrix}\right)\oplus m^\prime,
\]
where $m^\prime$ has support $([n]-\{i,j\},[n]-\{i,j\})$, and is
non-empty since $n\geq3$. Thus, in both cases, $m$ is in fact
decomposable. Hence,
\begin{equation*}
\big|(F_s^\ast)_{{\Eta}_{s}^\ast}([n])\big| = \begin{cases}
                                                        1 +
\big|(\bar{F}^\ast_s)_{{\bar{\Eta}}_{s}^\ast}([n])\big|,&n=1,2,\\[2mm]
\big|(\bar{F}^\ast_s)_{{\bar{\Eta}}_{s}^\ast}([n])\big|,
&\text{otherwise,}
                                                        \end{cases}
\end{equation*}
and (\ref{Eq:CombMat6}) follows from Equations~(\ref{Eq:CombMat3})
and (\ref{Eq:CombMat4}).

The enumeration can be done exactly if $s=2$. For, according to
Birkhoff's Theorem (cf.\ \cite{Birkhoff} or 
\cite[Corollary~8.40]{Aig}), 
a $2$-magic matrix $m$ is the sum of two permutation
matrices, say $p_1$ and $p_2$. 
If $m$ is indecomposable, then the
pair $\{p_1,p_2\}$ is uniquely determined. Premultiplying by
$p_1^{-1}$, we obtain a situation where $p_1$ is the identity;
indecomposability forces $p_2$ to be the permutation matrix
corresponding to a cyclic permutation. So there are $n!(n-1)!$
choices for $(p_1,p_2)$, and half this many choices for $m$
(assuming, as we may, that $n>1$). Note that this formula gives
half the correct number for $n=1$. So we have
\[\big|(F_2)_{\Eta_{2}}([n_1],[n_2])\big| = \begin{cases}
1, & n_1=n_2=n=1,\\ \frac{n!(n-1)!}{2}, & n_1=n_2=n>1,\\ 0, &
n_1\neq n_2, \end{cases}
\]
that is,
\[
\GF_{(F_2)_{\Eta_{2}}}(z_1,z_2) = \frac{1}{2}z_1z_2 -
\frac{1}{2}\log(1-z_1z_2),
\]
and therefore
\begin{equation}
\label{Eq:CombMat7} 
\widetilde\GF_{F_2}(z_1,z_2,y) =
(1-z_1z_2)^{-y/2}\,e^{z_1z_2y/2}
\end{equation}
by Equation~(\ref{Eq:CombMat1}). Also,
\begin{equation}
\label{Eq:CombMat8} 
\widetilde\GF_{\bar{F}_2}(z_1,z_2,y)
= (1 - z_1z_2)^{-y/2}\,e^{-z_1z_2y/2},
\end{equation}
making use of Equation~(\ref{Eq:CombMat5}) and the last result.
Special cases of Identities~\eqref{Eq:CombMat7} and
\eqref{Eq:CombMat8} appear in \cite[Sections~8.1 and 8.3]{ADG} 
(see also \cite[Eqs.~(23) and
(24) in Example~6.11]{Stan2}).
For $s>2$, enumeration is more difficult; see Stanley's paper
\cite{Stan1} and Comtet \cite[pp.~124--125]{Comtet} for comments
in this direction; also Goulden and Jackson~\cite[Sections~3.4 and
3.5]{GJ} for some variations on this counting
problem.\footnote{Note however, that the formula given in
\cite{Comtet} for $s=3$ is erroneous.}

For symmetric matrices, it is again possible to count the
indecomposables with $s=2$. For $n>2$, such a matrix can be
represented as a graph in which every vertex has degree~$2$; loops
are permitted, but contribute only one to the degree of a vertex.
Indecomposability of the matrix is reflected in connectedness of
the graph. So the graphs we must consider are paths (with a loop
at each end) and cycles; and, for $n>2$, their number is
$\frac {1} {2}(n!+(n-1)!)$. 
Including the cases where $n\leq2$, we obtain
\[
\GF_{(F_2^\ast)_{{\Eta}_{2}^\ast}}(z) = 
\frac {z^2} {4}+\frac{z}{2(1-z)}-\frac {1} {2}\log(1-z),
\]
and, hence
\begin{equation}
\label{Eq:CombMat9} 
\widetilde\GF_{F^*_2}(z,y)
= (1-z)^{-y/2}\,\exp\Big(\frac {yz^2} {4}+\frac{y z}{2(1-z)}\Big),
\end{equation}
as well as
\begin{equation}
\label{Eq:CombMat10}
\widetilde\GF_{\bar{F}_2^\ast}(z,y) =
(1-z)^{-y/2}\,\exp\Big(-\frac {yz^2} {4}-yz+\frac{yz}{2(1-z)}\Big).
\end{equation}
Identity~\eqref{Eq:CombMat9} generalises \cite[Eq.~(6.3)]{Gup},
whereas \eqref{Eq:CombMat10} generalises \cite[Eq.~(6.4)]{Gup}.

\section{Simple commutative monoids in species}
\label{sec:commeta}

In this final section, we address the natural question:
{\it`Can one characterise all possible composition operators in
$r$-sort species?{}'} In particular: 
how far can the
composition operator $\Eta$ of our theory 
differ from the standard operation for the species of sets of 
combinatorial structures given by forming the disjoint union,
of which
Example~\ref{ex:1} in Section~\ref{sec:illust} is a prototypical
example? We do not have an answer in general. However,
while addressing this question, we also clarify the relation
of our work to the theory developed by Menni in \cite{MennAA}.

Already Joyal pointed out in \cite[Sec.~7.1]{Joyal} that ($r$-sort) species
are endowed with the structure of a {\it symmetric monoidal
category}. The main objects in Menni's theory \cite{MennAA} are {\it
simple commutative monoids} in an {\it arbitrary} symmetric monoidal
category. He defines the notion of a {\it decomposition} as
a simple commutative monoid that satisfies a certain pullback
condition (see \cite[Def.~2.1]{MennAA}), and, in the case where the symmetric
monoidal category that we start with is the category of ($r$-sort)
species, he shows that a decomposition is equivalent with a
composition operator as defined in Section~\ref{Sec:MainThm}. 
He defines an exponential principle in this general
setting, and he proves this principle to hold for a
large family of symmetric monoidal categories that includes
$r$-sort species (see \cite[Prop.~1.4, Ex.~3.2, and Ex.~3.5
with $I=1+1+\dots+1$, the summand $1$ appearing $r$ times in the 
sum]{MennAA}). He then moves on
to characterise decompositions in symmetric monoidal categories
(see \cite[Sec.~2.6]{MennAA}). 

We shall next discuss the notions mentioned in the previous two
paragraphs in some more detail, in order to provide a better feeling
of what Menni's theory is about. Subsequently, we shall make Menni's
characterisation of decompositions explicit for the case of 
$r$-sort species, that is, of composition operators in the sense of
Section~\ref{Sec:MainThm} that define the structure of a simple
commutative monoid in $r$-sort species. We shall see that the latter
is equivalent to the condition of the composition operator
being ``pointwise associative" and ``pointwise commutative" 
(see \eqref{eq:passoc} and \eqref{eq:pcomm}). For the sake of being
self-contained, and for including the case of weighted species as
well (weights not being addressed in \cite{MennAA}, which however
could be built in without too much effort), we provide an independent
proof. We close this section by exhibiting a large family of examples
of composition operators
(see Theorem~\ref{thm:Etwist}), generalising Example~\ref{ex:3}, 
that are {\it not\/} pointwise associative or commutative and, thus,
do not define the structure of a commutative monoid. These examples
are therefore not included in Menni's theory \cite{MennAA}, which
shows that our axiomatic set-up is wider than Menni's theory in the
special case of $r$-sort species.

\medskip
Let $\Sp_r$ denote the category of $r$-sort species.
Given two species $F$ and $G$ in $\Obj(\Sp_r)$ and $\O\in\Obj({\bf Set}^r)$, 
we define their product $*$ (a special case of the more general concept of 
Day convolution; cf.~\cite{ImKeAA} and \cite[Sec.~3.1]{MennAC}) by
$$
(F* G)[\O]:=\coprod_{\O_1\amalg \O_2=\O}F[\O_1]\times G[\O_2],
$$
with the obvious morphisms.
In order to discuss monoids in species, we need to introduce the
``unit" species $\mathbf 1$ by
$$\mathbf 1[\O]:=\begin{cases} 
\{1\}&\text {if }\O=\emptyset,\\
\emptyset&\text {if }\O\ne\emptyset.
\end{cases}$$
Here, $1$ denotes a ``canonical" element, and the morphisms are the
obvious ones. Then the triple $(\Sp_r,*,\mathbf 1)$
forms a {\it symmetric monoidal category} (see \cite[Sec.~7.1]{Joyal}). 
We refer the reader to 
\cite{KellAA} (see also \cite{MacLAA}) 
for the precise definition of a symmetric monoidal category. 
For our purposes, it suffices to say
that, in the case of $r$-sort species, 
this involves the natural transformation(s)
$$
\alpha=\alpha_{F,G,H}:F* (G* H)\to (F* G)* H
$$
given by
$$
\alpha:(x,(y,z))\to ((x,y),z),\quad \text {for }
x\in F[\O_1],\
y\in G[\O_2],\
z\in H[\O_3],\
$$
where $F,G,H\in\Obj(\Sp_r)$, and where
$\O_1,\O_2,\O_3\in \Obj({\bf Set}^r)$ are pairwise disjoint,
the natural transformation(s)
$$
\beta=\beta_{F,G}:F* G\to G* F
$$
given by
$$
\beta:(x,y)\to (y,x),\quad \text {for }
x\in F[\O_1],\
y\in G[\O_2],\
$$
the natural transformation(s)
$$
\lambda=\lambda_{F}:\mathbf 1* F\to F
$$
given by
$$
\lambda:(1,x)\to x,\quad \text {for }
x\in F[\O_1],\
$$
and the natural transformation(s)
$$
\rho=\rho_{F}:F* \mathbf 1\to F
$$
given by
$$
\rho=\rho_F:(x,1)\to x,\quad \text {for }
x\in F[\O_1],\
$$
so that --- roughly speaking --- all association and commutation laws
that one may think of are satisfied.

A {\it monoid\/} in the symmetric monoidal category $\Sp_r$ is
by definition a triple $(F,*,\mathbf 1)$, where
$F\in\Obj(\Sp_r)$, such that there are natural transformations
$\mu:F* F\to F$ and $\nu:\mathbf 1\to F$ such that the diagrams
\begin{equation} \label{diag:1}
\begin{diagram}
F*(F* F)&\rTo^{\hbox{$\kern10pt \alpha\kern10pt $}}&
(F* F)* F&\rTo^{\mu*\id}&F* F\\
\dTo^{\id* \mu}& & & &\dTo_{\mu}\\
F* F& &\rTo^{\mu}& &F
\end{diagram}
\end{equation}
and
\begin{equation} \label{diag:2}
\begin{diagram}
\mathbf 1* F&\rTo^{\nu*\id}&
F* F&\rTo^{\id*\nu}&F* \mathbf 1\\
&\rdTo _\lambda& \dTo^{\mu}  &\ldTo >{\rho}\\
& &F& &
\end{diagram}
\end{equation}
commute. A monoid $(F,*,\mathbf 1)$ is {\it commutative} if
the diagram
\begin{equation} \label{diag:3}
\begin{diagram}
F* F&&\rTo^{\beta}&&
F* F\\
&\rdTo _\mu&  &\ldTo >{\mu}\\
& &F& &
\end{diagram}
\end{equation} 
commutes. It is called {\it simple} if the natural transformation
$\nu:\mathbf 1\to F$ is unique. 

As Menni explains in \cite[Example~3.2]{MennAA}, a natural
transformation $\Eta$ from $F\times F$ to $F\circ \amalg$ as in
Section~\ref{Sec:MainThm} induces a 
natural transformation $\mu:F*F\to F$ by
$$
F[\O_1]\times F[\O_2] \overset {\eta_{(\O_1,\O_2)}}
\longrightarrow F[\O_1\amalg \O_2]
\overset\id\longrightarrow
F[\O]
$$
(where, as usual, $\O=\O_1\amalg\O_2$ is a partition of $\O$)
and by the universality property of coproducts. Conversely, a 
natural transformation
$\mu:F*F\to F$ induces a natural transformation $\Eta$ from $F\times F$
to $F\circ\amalg$ by
$$
F[\O_1]\times F[\O_2] \longrightarrow (F*F)[\O_1\amalg \O_2]
\overset\mu\longrightarrow
F[\O_1\amalg \O_2]
$$
In this precise sense, natural transformations 
$\mu:F*F\to F$ and natural transformations
$\Eta:F\times F\to F\circ \amalg$ are equivalent notions. 
Under this equivalence, commutativity of the diagrams~\eqref{diag:1}
and \eqref{diag:3} is equivalent to
the composition operator $\Eta$ being {\it pointwise associative} and
{\it pointwise commutative}, respectively. Here,
we say that $\Eta$ is pointwise associative
if, for all pairwise disjoint 
$\O_1,\O_2,\O_3\in\Obj(\mathbf{Set}^r)$, and all elements
$x_1\in F[\O_1]$, $x_2\in F[\O_2]$, $x_3\in F[\O_3]$, we have
\begin{equation} \label{eq:passoc}
\eta_{(\O_1\amalg\O_2,\O_3)}
\Big(\big(\eta_{(\O_1,\O_2)}\big((x_1,x_2)\big),x_3\big)\Big)=
\eta_{(\O_1,\O_2\amalg\O_3)}
\Big(\big(x_1,\eta_{(\O_2,\O_3)}\big((x_2,x_3)\big)\big)\Big),
\end{equation}
and we say that $\Eta$ is pointwise commutative
if, for all $(\O_1,\O_2)\in\Obj(\mathfrak D_r)$, and all elements
$x_1\in F[\O_1]$, $x_2\in F[\O_2]$, we have
\begin{equation} \label{eq:pcomm}
\eta_{(\O_1,\O_2)}\big((x_1,x_2)\big)=
\eta_{(\O_2,\O_1)}\big((x_2,x_1)\big).
\end{equation}
The $\Eta$-maps in Examples~\ref{ex:1} and \ref{ex:2} are instances of
pointwise associative and 
commutative composition operators, while the composition operator in
Example~\ref{ex:3} is
neither pointwise associative nor pointwise commutative.
The notion of pointwise commutativity and associativity 
should {\it not\/} be confused with the commutativity and the 
$3$-associativity proved in Lemmas~\ref{lem:comm} and
\ref{lem:assoc3}, respectively, which are (in general) strictly
weaker assertions. In particular, if $(F,*,\mathbf 1)$ is a monoid,
then all ``permutabilities" in Lemmas~\ref{lem:comm},
\ref{lem:assoc3}, \ref{lem:assoc4}, \ref{lem:assoc},
\ref{lem:Feta-assoc} come for free since they hold already on a
functorial level, but, as Theorem~\ref{thm:Etwist} shows, the
converse is not true; that is, these
``permutabilities" do {\it not\/} guarantee that $(F,*,\mathbf 1)$ forms a
monoid.

Finally, it is easy to see that there is a 
unique natural transformation $\nu:\mathbf 1\to F$ if and only if 
$\vert F[\emptyset]\vert =1$, a condition automatically satisfied
by a composition operator (see Lemma~\ref{lem:emptyset}), and it is
also easy to see that the diagram~\eqref{diag:2} always commutes.

To summarise the above discussion: 
the notion of $(F,*,\mathbf 1)$ being a simple commutative monoid
is equivalent to the corresponding composition operator $\Eta$ being
pointwise associative and commutative.

\medskip
Now that we have discussed the precise relationship between the theory in
\cite{MennAA} (specialised to $r$-sort species) and our setting laid
down in Section~\ref{Sec:MainThm}, we want to make 
Menni's characterisation of simple commutative monoids in the case of 
$r$-sort species, that is ---
in our language --- of pointwise and associative composition
operators, explicit. In order to do so,
we need two preparatory results.
Recall that a {\it species isomorphism} between two $r$-sort species
$F_1$ and $F_2$ is a collection of maps
$\boldsymbol\ph=(\ph_\O)_{{\O\in\Obj(\mathbf{Set}^r)}}$,
where, for each $\O\in\Obj(\mathbf{Set}^r)$,
$$\ph_\O:F_1[\O]\to F_2[\O]$$
is a bijection, with the property that, for every morphism 
$\ff:\O\to\widetilde\O$ in the category
$\mathbf{Set}^r$, the diagram
\begin{equation}
\label{eq:phinat}
\begin{CD}
F_1[\O] @>F_1[\ff]>>F_1[\widetilde\O]\\
@V{\ph_\O}VV
@VV{\ph_{\widetilde\O}}V\\
F_2[\O] 
@>>{F_2[\ff]}>
F_2[\widetilde\O] 
\end{CD}
\end{equation}
commutes. If $F_1$ carries a weak $\Lambda_1$-weight $\ww_1$ and
$F_2$ carries a weak $\Lambda_2$-weight $\ww_2$, where, by ``weak,"
we mean that $\ww_1$ and $\ww_2$ satisfy
Axioms~(W0) and (W1),
but not necessarily (W2) (cf.\ Section~\ref{Sec:MainThm}), 
then an isomorphism
$\boldsymbol\ph:F_1\to F_2$ is called {\it weight-preserving}, if
there exists a ring homomorphism $\lambda:\Lambda_1\to\Lambda_2$ 
such that the diagram 
\begin{equation}
\label{eq:phiweight}
\begin{CD}
F_1[\O] @>(w_1)_{\O}>>\Lambda_1\\
@V{\ph_\O}VV
@VV{\lambda}V\\
F_2[\O] 
@>>{(w_2)_{\O}}>
\Lambda_2
\end{CD}
\end{equation}
commutes.

The lemma below tells us that, if $F_1$ and $F_2$ are two isomorphic $r$-sort
species, where $F_1$ is decomposable with composition operator
$\Eta_1$, then $\Eta_1$ can be lifted to a composition operator for
$F_2$, demonstrating that $F_2$ is decomposable as well.

\begin{lemma} \label{lem:etatrans}
Let $F_1$ and $F_2$ be two isomorphic $r$-sort species, 
where $F_1$ is decomposable
with composition operator $\Eta_1$. Furthermore, let $\boldsymbol\ph$
be an isomorphism between $F_1$ and $F_2$.
Then $F_2$ is decomposable, and 
the family of maps $\Eta_2=((\eta_2)_{(\O_1,\O_2)})_{
(\O_1,\O_2)\in\Obj(\mathfrak D_r)}$ defined by
\begin{multline*}
(\eta_2)_{(\O_1,\O_2)}(x_1,x_2):=
\ph_{\O_1\amalg \O_2}
\Big((\eta_1)_{(\O_1,\O_2)}
\big((\ph^{-1}_{\O_1}(x_1),\ph^{-1}_{\O_2}(x_2))\big)\Big),
\\
x_1\in F_2[\O_1],\ x_2\in F_2[\O_2],
\end{multline*}
is a composition operator for $F_2$.
\end{lemma}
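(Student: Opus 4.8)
The plan is to show that transporting $\Eta_1$ through the isomorphism $\boldsymbol\ph$ yields a family of maps with all the defining properties of a composition operator: injectivity of the maps \eqref{eq:Eta}, naturality (commutativity of \eqref{Eq:NatDiag}), and Axiom~(D1). Since $\boldsymbol\ph$ is merely a relabelling of the elements of $F_1$, each of these properties should descend from the corresponding property of $(F_1,\Eta_1)$ essentially by conjugation, so the whole argument is a ``transport of structure.''

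First I would record the behaviour of $\eta_2$ on \emph{subsets}: for $A_1\subseteq F_2[\O_1]$ and $A_2\subseteq F_2[\O_2]$ one has $(\eta_2)_{(\O_1,\O_2)}(A_1\times A_2)=\ph_{\O_1\amalg\O_2}\big((\eta_1)_{(\O_1,\O_2)}(\ph^{-1}_{\O_1}(A_1)\times\ph^{-1}_{\O_2}(A_2))\big)$. In particular, taking $A_i=F_2[\O_i]$ and using $\ph^{-1}_{\O_i}(F_2[\O_i])=F_1[\O_i]$, we obtain $\eta_2(F_2[\O_1]\times F_2[\O_2])=\ph_{\O_1\amalg\O_2}\big(\eta_1(F_1[\O_1]\times F_1[\O_2])\big)$. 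Injectivity of each $(\eta_2)_{(\O_1,\O_2)}$ is then immediate, being a composite of the bijections $\ph^{-1}_{\O_1}$, $\ph^{-1}_{\O_2}$, $\ph_{\O_1\amalg\O_2}$ and the injection $(\eta_1)_{(\O_1,\O_2)}$.

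Next I would verify naturality. For a morphism $\ff:(\O_1,\O_2)\to(\widetilde\O_1,\widetilde\O_2)$ in $\mathfrak D_r$, the square \eqref{Eq:NatDiag} for $\Eta_2$ factors as a paste of three commuting squares: the inverse form of the naturality square \eqref{eq:phinat} of $\boldsymbol\ph$ on the two source components, the naturality square \eqref{Eq:NatDiag} for $\Eta_1$, and the naturality square \eqref{eq:phinat} of $\boldsymbol\ph$ applied to the morphism $\O_1\amalg\O_2\to\widetilde\O_1\amalg\widetilde\O_2$ induced by $\ff$ on the target. Concretely, substituting the definition of $\eta_2$, one moves $(F_2\circ\amalg)[\ff]$ past $\ph_{\O_1\amalg\O_2}$ (naturality of $\boldsymbol\ph$), then past $(\eta_1)_{(\O_1,\O_2)}$ (naturality of $\Eta_1$), then past $(\ph^{-1}_{\O_1}\times\ph^{-1}_{\O_2})$ (inverse naturality of $\boldsymbol\ph$), arriving at $(\eta_2)_{(\widetilde\O_1,\widetilde\O_2)}\circ(F_2\times F_2)[\ff]$. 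This requires no more than reading off \eqref{eq:phinat} twice and \eqref{Eq:NatDiag} once.

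The main step, and the one needing the most care, is Axiom~(D1). Fix $\O$ and two partitions $\O_1\amalg\O_2=\O=\widetilde\O_1\amalg\widetilde\O_2$, and set $\O_{ij}=\O_i\cap\widetilde\O_j$. Using the subset formula above and the fact that the single bijection $\ph_\O$ preserves intersections, the left-hand side of \eqref{Eq:DecompAx} for $\Eta_2$ equals $\ph_\O\big(\eta_1(F_1[\O_1]\times F_1[\O_2])\cap\eta_1(F_1[\widetilde\O_1]\times F_1[\widetilde\O_2])\big)$, which by (D1) for $\Eta_1$ equals $\ph_\O$ applied to the nested bracketing $\eta_1\big(\eta_1(F_1[\O_{11}]\times F_1[\O_{12}])\times\eta_1(F_1[\O_{21}]\times F_1[\O_{22}])\big)$. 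For the right-hand side I would expand $\eta_2\big(\eta_2(F_2[\O_{11}]\times F_2[\O_{12}])\times\eta_2(F_2[\O_{21}]\times F_2[\O_{22}])\big)$ with the subset formula; the crucial bookkeeping is that $\O_{11}\amalg\O_{12}=\O_1$ and $\O_{21}\amalg\O_{22}=\O_2$, so the inner applications of $\ph_{\O_1}$ and $\ph_{\O_2}$ are cancelled against the $\ph^{-1}_{\O_1}$ and $\ph^{-1}_{\O_2}$ of the outer $\eta_2$, leaving exactly the same expression $\ph_\O\big(\eta_1(\eta_1(\cdots)\times\eta_1(\cdots))\big)$. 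Matching the two computations establishes \eqref{Eq:DecompAx} for $\Eta_2$, and since $F_2\neq\emptyset$, this shows $F_2$ is decomposable and completes the proof. The only genuine subtlety is this index-matching in the (D1) step; everything else is formal transport of structure along $\boldsymbol\ph$.
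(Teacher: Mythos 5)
Your proposal is correct and follows essentially the same route as the paper: naturality is obtained by pasting the naturality squares of $\boldsymbol\ph$ and $\Eta_1$, and Axiom~(D1) is verified by pulling the intersection through the injective map $\ph_\O$, applying (D1) for $(F_1,\Eta_1)$, and then recognising the nested $\eta_1$-expression as the nested $\eta_2$-expression via the cancellations $\ph^{-1}_{\O_i}\circ\ph_{\O_i}=\mathrm{id}$ (the paper performs this last step in the opposite direction, inserting $\ph^{-1}\circ\ph$ rather than cancelling, but the computation is identical). Your explicit index-matching $\O_{11}\amalg\O_{12}=\O_1$, $\O_{21}\amalg\O_{22}=\O_2$ is exactly the bookkeeping the paper relies on.
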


\begin{proof}
We have to show that $\Eta_2$ is a natural transformation from
$F_2\times F_2$ to $F_2\circ\amalg$, and that the pair $(F_2,\Eta_2)$ 
satisfies Axiom~(D1). The former follows immediately from the corresponding
property for $(F_1,\Eta_1)$ and the naturality condition
\eqref{eq:phinat}. In order to verify (D1), we start with
the left-hand side of \eqref{Eq:DecompAx} for the pair $(F_2,\Eta_2)$,
suppressing the indices of $\eta_1,\eta_2,\ph$ for better readability:
\begin{align*}
\eta_2&\big(F_2[\O_1] \times F_2[\O_2]\big)
\cap \eta_2\big(F_2[\widetilde{\O}_1] \times
F_2[\widetilde{\O}_2]\big)\\
&=\ph\Big(\eta_1
\big(\ph^{-1}(F_2[\O_1])\times\ph^{-1}(F_2[\O_2])\big)\Big)
\cap \ph\Big(\eta_1
\big(\ph^{-1}(F_2[\widetilde\O_1])
\times\ph^{-1}(F_2[\widetilde\O_2])\big)\Big)\\
&=\ph\Big(\eta_1\big(
F_1[\O_1]\times F_1[\O_2]\big)\Big)
\cap \ph\Big(\eta_1\big(
F_1[\widetilde\O_1]
\times F_1[\widetilde\O_2]\big)\Big)\\
&=\ph\Big(\eta_1\big(
F_1[\O_1]\times F_1[\O_2]\big)
\cap \eta_1\big(
F_1[\widetilde\O_1]
\times F_1[\widetilde\O_2]\big)\Big).
\end{align*}
Here we have used the injectivity of $\ph$ to obtain the last line.
Now we substitute the right-hand side of \eqref{Eq:DecompAx} for the
pair $(F_1,\Eta_1)$, to obtain
\begin{multline*}
\eta_2\big(F_2[\O_1] \times F_2[\O_2]\big)
\cap \eta_2\big(F_2[\widetilde{\O}_1] \times
F_2[\widetilde{\O}_2]\big)\\
=\ph\Big(
 \eta_1\big(\eta_1\big(F_1[\O_{11}]
\times F_1[\O_{12}]\big) \times \eta_1\big(F_1[\O_{21}]
\times F_1[\O_{22}]\big)\big)\Big),
\end{multline*}
where $\O_{ij}:= \O_i \cap \widetilde{\O}_j$ for
$i,j\in\{1,2\}$. Using $F_1[\O_{ij}]=\ph^{-1}(F_2[\O_{ij}])$ at each
possible place, and inserting $\text{id}=\ph^{-1}\circ\ph$ at two
places, we arrive at
\begin{align*}
\eta_2\big(F_2[\O_1] \times &F_2[\O_2]\big)
\cap \eta_2\big(F_2[\widetilde{\O}_1] \times
F_2[\widetilde{\O}_2]\big)\\
&=\ph\Big(
 \eta_1\big(\ph^{-1}\big(\ph\big(\eta_1\big(\ph^{-1}(F_2[\O_{11}])
\times \ph^{-1}(F_2[\O_{12}])\big)\big)\big) \\
&\kern3cm
\times 
\ph^{-1}\big(\ph\big(\eta_1\big(\ph^{-1}(F_2[\O_{21}])
\times \ph^{-1}(F_2[\O_{22}])\big)\big)\big)\big)\Big)\\
&= \eta_2\big(\eta_2\big(F_2[\O_{11}]
\times F_2[\O_{12}]\big)
\times 
\eta_2\big(F_2[\O_{21}]
\times F_2[\O_{22}]\big)\big),
\end{align*}
which is exactly \eqref{Eq:DecompAx} for the
pair $(F_2,\Eta_2)$.
\end{proof}

The second preparatory result, Proposition~\ref{lem:E(Feta)}
below, states that, given a decomposable $r$-sort species
$F$ with pointwise associative and
commutative composition operator $\Eta$, $F$ is isomorphic to
$E(F_\Eta)$, where $E(F_\Eta)$ denotes the species of sets of
$F_\Eta$-structures (cf.\ \cite[p.~8]{BLL} for the definition of the
species of sets, $E$, and \cite[p.~41]{BLL} for the definition of
composition of species). In rigorous terms,
for $\O\in\Obj(\mathbf{Set}^r)$, the set $E(F_\Eta)[\O]$ can be defined
by
\begin{multline*}
E(F_\Eta)[\O]:=\Big\{\big\{(x_1,\O_1),\dots,(x_k,\O_k)\big\}:x_i\in 
F_\Eta[\O_i],\ i=1,\dots,k,\\
\text{for some $k\in\mathbb N_0$ and }\O_1\amalg\cdots\amalg \O_k=\O,\
\text{all $\O_i$'s being non-empty}\Big\},
\end{multline*}
with the obvious notion of induced morphisms.
If $F$ carries a weak $\Lambda$-weight $\ww$, then $\ww$ can be
lifted to a weak $\Lambda$-weight of $E(F_\Eta)$ by setting
$$w_\O\Big(\big\{(x_1,\O_1),\dots,(x_k,\O_k)\big\}\Big):=
w_{\O_1}(x_1)\cdots w_{\O_k}(x_k).$$

\begin{proposition} \label{lem:E(Feta)}
Let $F$ be a decomposable weighted $r$-sort species
with composition operator $\Eta$, where $\Eta$ is
pointwise associative and 
commutative. Then there exists a weight-preserving isomorphism
between $F$ and $E(F_\Eta)$.
\end{proposition}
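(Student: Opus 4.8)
The plan is to construct an explicit weight-preserving isomorphism $\boldsymbol\ph=(\ph_\O)_{\O\in\Obj(\mathbf{Set}^r)}$ with $\ph_\O\colon E(F_\Eta)[\O]\to F[\O]$, sending a set of components to the single element of $F[\O]$ obtained by ``multiplying them together" via iterated $\Eta$-maps. Concretely, for $k\ge2$ I set $\ph_\O(\{(x_1,\O_1),\dots,(x_k,\O_k)\}):=\eta(x_1\times\eta(x_2\times\cdots\times\eta(x_{k-1}\times x_k)\cdots))$ (indices of the $\eta$-maps being understood), for $k=1$ the value is $x_1$, and for $k=0$ (so $\O=\pmb{\emptyset}$) the empty set is sent to the unique element of $F[\pmb{\emptyset}]$ provided by Lemma~\ref{lem:emptyset}. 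The first task is to check that $\ph_\O$ is well defined, i.e.\ independent of the listing order of the pairs and of the chosen bracketing. This is exactly where the hypotheses enter: pointwise commutativity \eqref{eq:pcomm} allows transposing adjacent factors, pointwise associativity \eqref{eq:passoc} allows reassociating, and these two moves generate all reorderings and rebracketings of a finite product, so the value depends only on the underlying set of pairs.

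Next I would show that each $\ph_\O$ is a bijection. Surjectivity follows from Proposition~\ref{lem:F->Fk}, which gives $F[\O]=\bigcup_{k\ge0}F_\Eta^{(k)}[\O]$, together with Lemma~\ref{lem:Feta1} and the description \eqref{eq:Fm} exhibiting $F_\Eta^{(k)}[\O]$ as the totality of $\Eta$-bracketings of $k$ indecomposable factors; hence every $x\in F[\O]$ equals $\ph_\O(S)$ for some $S$. Injectivity is the crux and amounts to unique factorisation into indecomposables. I prove it by induction on $\|\O\|$, noting first that by Proposition~\ref{prop:Disjoint} the number $k$ of components is an invariant of $x$, since the sets $F_\Eta^{(k)}[\O]$ are pairwise disjoint. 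Fixing a base point $(\omega,\rho)\in\O$, the disjoint-union decomposition \eqref{Eq:FEtaKDec} of Lemma~\ref{lem:Fkzerl} shows that the block $\O_1\ni(\omega,\rho)$ and its label $x_1\in F_\Eta[\O_1]$ are uniquely determined by $x=\ph_\O(S)$; injectivity of the $\Eta$-maps then pins down the remaining element of $F_\Eta^{(k-1)}[\O-\O_1]$, and the inductive hypothesis forces the rest of $S$ to be unique. Thus $S$ is recoverable from $x$, and $\ph_\O$ is injective.

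It then remains to verify naturality and weight-preservation. Naturality, i.e.\ commutativity of \eqref{eq:phinat}, follows by the same computation as in the proof of Lemma~\ref{lem:kfunct}: functoriality of $F_\Eta$ (Lemma~\ref{lem:funct}) together with the naturality square \eqref{Eq:NatDiag} shows that relabelling the components by a bijection $\ff$ and then forming the bracketing gives the same result as forming the bracketing and then applying $F[\ff]$. For weights I take $\lambda=\id_\Lambda$ in \eqref{eq:phiweight}; repeated application of Axiom~(W2) in the form \eqref{eq:wmult} (legitimate because at each step the left-hand factor lies in an $F_\Eta$-set, as (W2) requires) yields $w_\O(\ph_\O(S))=\prod_{i=1}^k w_{\O_i}(x_i)$, which is precisely the lifted weight of $S$ in $E(F_\Eta)$.

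I expect the main obstacle to be injectivity, namely establishing uniqueness of the decomposition of an element of $F[\O]$ into $F_\Eta$-components. However, the ingredients have already been assembled --- the disjointness furnished by Lemma~\ref{lem:Fkzerl} and Proposition~\ref{prop:Disjoint}, and $m$-permutability for $(F_\Eta,\Eta)$ in Lemma~\ref{lem:Feta-assoc} --- so the argument reduces to the base-point induction sketched above. A subtle point worth flagging is that the well-definedness of $\ph_\O$ genuinely requires the pointwise hypotheses \eqref{eq:passoc}--\eqref{eq:pcomm}, and not merely the set-level permutability results of Section~\ref{sec:aux1}, precisely because $\ph_\O$ acts on individual elements rather than on whole $F$-sets.
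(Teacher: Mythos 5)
Your proof is correct, and it is essentially the paper's construction run in the opposite direction: the paper defines $\ps_\O\colon F[\O]\to E(F_\Eta)[\O]$ recursively by peeling off, via the disjoint decomposition of Lemmas~\ref{lem:Fzerl} and \ref{lem:Fdisj}, the component containing a chosen base point, whereas you define the inverse map $\ph_\O=\ps_\O^{-1}$ in closed form as an iterated $\Eta$-product of the components. The trade-off is where the work lands. In the paper's version the delicate step is well-definedness (independence of the base point), settled by Lemma~\ref{lem:Feta-assoc} together with the pointwise hypotheses, after which bijectivity is routine; in yours, well-definedness is immediate from generalised associativity and commutativity of a (partially defined, but everywhere-applicable since the supports are pairwise disjoint) binary operation, and the work shifts to bijectivity --- surjectivity via Proposition~\ref{lem:F->Fk}, Lemma~\ref{lem:Feta1} and \eqref{eq:Fm}, and injectivity via exactly the same base-point induction on $\|\O\|$ using Lemma~\ref{lem:Fkzerl} and Proposition~\ref{prop:Disjoint} that drives the paper's recursion. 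Your closing remark is also on target: the pointwise hypotheses \eqref{eq:passoc}--\eqref{eq:pcomm} are genuinely needed to manipulate individual elements, and the set-level permutability lemmas would not suffice. The naturality and weight-preservation arguments (iterating (W2) from the inside out, which is legitimate since each left-hand factor lies in an $F_\Eta$-set) are as in the paper.
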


\begin{proof}
The starting point is the combination of Lemmas~\ref{lem:Fzerl} and
\ref{lem:Fdisj}. It says that, 
for each non-empty $\O \in \Obj({\bf Set}^r)$ and
every choice of base point $(\omega,\rho)\in\O,$ we have
\begin{equation}
\label{eq:FOmegaDecomp} 
F[\O]=
\underset{(\omega,\rho)\in\O_1\subseteq\O}
{\coprod_{\O_1\in\Obj({\bf Set}^r)}} \eta_{(\O_1,\O-\O_1)}\big(F_\Eta[\O_1]
\times F[\O -\O_1]\big).
\end{equation}

We are now going to construct bijective maps 
$\ps_\O:F[\O]\to E(F_\Eta)[\O]$ by
induction on $\|\O\|$, where $\|\,.\,\|$ has been defined in
\eqref{eq:omabs}. For $\O=\pmb\emptyset$, we have
$|F[\O]|=|E(F_\Eta)[\O]|=1$ by Lemma~\ref{lem:emptyset} respectively
the definition of $E(F_\Eta)$, whence the construction of
$\ps_{\pmb\emptyset}$ is trivial. Henceforth, we shall suppose that
$\|\O\|\ge1$, and we assume that we have constructed maps
$\ps_{\widetilde\O}$ for
all $\widetilde\O\in\Obj(\mathbf{Set}^r)$ with $\|\widetilde\O\|<N$.

Now let $\|\O\|=N$. 
Choose a base point $(\omega,\rho)\in\O,$ and let
$x\in F[\O]$. By \eqref{eq:FOmegaDecomp}, there is a unique $\O_1$
such that $x\in \eta_{(\O_1,\O-\O_1)}\big(F_\Eta[\O_1]
\times F[\O -\O_1]\big)$ and $(\omega,\rho)\in\O_1$. 
Let $(y_1,x_1)$ be the uniquely determined
pair with $y_1\in F_\Eta[\O_1]$ and $x_1\in F[\O -\O_1]$, such that
\begin{equation} \label{eq:psi1} 
(y_1,x_1):=\eta_{(\O_1,\O-\O_1)}^{-1}(x).
\end{equation}
By the inductive hypothesis, there exist uniquely determined
elements $y_2,\dots,y_k$, for some
$k\in\mathbb N$ and $y_i\in F_\Eta[\O_i]$, $i=2,\dots,k$, with
$\O_2\amalg\cdots\amalg\O_k=\O-\O_1$, such that
\begin{equation} \label{eq:psi2} 
\ps_{\O-\O_1}(x_1)=\big\{(y_2,\O_2),\dots,(y_k,\O_k)\big\}.
\end{equation}
Define
$$\ps_{\O}(x):=\big\{(y_1,\O_1),(y_2,\O_2),\dots,(y_k,\O_k)\big\}.$$
We claim that this yields a well-defined bijection $\ps_{\O}:F[\O]\to
E(F_\Eta)[\O]$. What needs to be checked here
first of all is that different
choices of base points would always lead to the same result. So, let
us suppose, that, by choosing a different base
point, we would have obtained
$$\bar\ps_{\O}(x):=\big\{(\bar y_1,\bar\O_1),(\bar y_2,\bar\O_2),\dots,(\bar
y_l,\bar\O_l)\big\},$$
for some $l$, instead.  
Since we must have
$$\O=\O_1\amalg\O_2\amalg\cdots\O_k=
\bar\O_1\amalg\bar\O_2\amalg\cdots\bar\O_l,$$
there is a $j$ such that $(w,\rho)\in\bar\O_j$.
By our inductive construction via \eqref{eq:psi1} and \eqref{eq:psi2},
we have
$$x\in\eta\Big(F_\Eta[\bar\O_1]\times\eta\big(F_\Eta[\bar\O_2]\times
\cdots\times\big(F_\Eta[\bar\O_{l-1}]\times 
F_\Eta[\bar\O_{l}]\big)\cdots\big)\Big).$$
By Lemma~\ref{lem:Feta-assoc} ($m$-permutability for $(F_\Eta,\Eta)$),
this is equivalent to saying that
\begin{equation} \label{eq:xeta2} 
x\in\eta\Big(F_\Eta[\bar\O_j]\times\eta\big(F_\Eta[\bar\O_{\sigma(2)}]\times
\cdots\times\big(F_\Eta[\bar\O_{\sigma(l-1)}]\times 
F_\Eta[\bar\O_{\sigma(l)}]\big)\cdots\big)\Big),
\end{equation}
where $\sigma(2),\dots,\sigma(l-1),\sigma(l)$ is some permutation of
$\{1,\dots,j-1,j+1,\dots,l\}$. If $\bar\O_j\ne\O_1$, then
\eqref{eq:psi1} and \eqref{eq:xeta2} would contradict the disjointness
in \eqref{eq:FOmegaDecomp}. Hence, we must have $\bar\O_j=\O_1$,
and, by our assumption that $\Eta$ be
pointwise associative and commutative, we even
must have $\bar y_j=y_1$. The inductive hypothesis applied to
$\O-\O_1$ then guarantees that, moreover,
$$
\{(y_2,\O_1),\dots,(y_k,\O_k)\}=
\{(\bar y_1,\O_1),\dots,(\bar y_{j-1},\bar\O_{j-1}),
(\bar y_{j+1},\bar\O_{j+1}),\dots,
(\bar y_l,\O_l)\}.$$
This proves that $\ps_\O$ is indeed well-defined.

The facts that each map $\ps_\O$ is a bijection, and 
that the family $\boldsymbol\ps=(\ps_\O)_{\O\in\Obj(\mathbf{Set}^r)}$
is an isomorphism between $F$ and $E(F_\Eta)$, are not hard to
verify. The fact that $\boldsymbol\ps$ is weight-preserving is obvious
from the definition of $\boldsymbol\ps$ and Axiom~(W2) for
$(F,\Eta,\ww)$.
This completes the proof of the proposition.
\end{proof}

If we combine Lemma~\ref{lem:etatrans} and Proposition~\ref{lem:E(Feta)},
then we can say exactly how a decomposable $r$-sort species $F$
with
pointwise associative and commutative composition operator $\Eta$ arises from 
the composition of the species of sets with the species of
$F_\Eta$-structures (``components"). 
We should point out here that, clearly, 
a natural composition operator for $E(F_\Eta)$ is given by
\begin{equation} \label{eq:natcomp} 
(y_1,y_2)\mapsto y_1\amalg y_2, \quad y_1\in E(F_\Eta)[\O_1],
\ y_2\in E(F_\Eta)[\O_2],\ \O_1\amalg\O_2=\O.
\end{equation}

\begin{theorem} \label{thm:commeta}
Let $F$ be a decomposable $r$-sort species
with composition operator $\Eta$, where $\Eta$ is
pointwise associative and 
commutative. Then, for all $(\O_1,\O_2)\in\Obj(\mathfrak D_r)$, 
the composition operator $\Eta$ can be expressed as follows:
\begin{equation} \label{eq:etapsi}
\eta_{(\O_1,\O_2)}(x_1,x_2)=
\ps^{-1}_{\O_1\amalg \O_2}
\Big(
\ps_{\O_1}(x_1)\amalg\ps_{\O_2}(x_2)\Big),\quad 
x_1\in F[\O_1],\ x_2\in F[\O_2],
\end{equation}
where $\boldsymbol\ps$ is the isomorphism between $F$ and $E(F_\Eta)$
constructed in the proof of Proposition~{\em\ref{lem:E(Feta)}}.
\end{theorem}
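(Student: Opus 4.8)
The plan is to prove the equivalent, more symmetric identity
\begin{equation*}
\ps_{\O_1\amalg\O_2}\big(\eta_{(\O_1,\O_2)}(x_1,x_2)\big)=\ps_{\O_1}(x_1)\amalg\ps_{\O_2}(x_2),\qquad x_1\in F[\O_1],\ x_2\in F[\O_2],
\end{equation*}
and then apply the bijection $\ps^{-1}_{\O_1\amalg\O_2}$ to both sides to recover \eqref{eq:etapsi}. Conceptually, this says that the isomorphism $\boldsymbol\ps$ built in the proof of Proposition~\ref{lem:E(Feta)} intertwines $\Eta$ with the disjoint-union composition operator \eqref{eq:natcomp} on $E(F_\Eta)$. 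I would establish it by induction on $\|\O_1\|$, keeping $\O_2$ arbitrary throughout.

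For the base case $\O_1=\pmb\emptyset$, I would use that $F[\pmb\emptyset]$ is a singleton (Lemma~\ref{lem:emptyset}), whose unique element $*$ satisfies $\ps_{\pmb\emptyset}(*)=\emptyset$ and, by the commutativity of diagram~\eqref{diag:2}, acts as a left identity, $\eta_{(\pmb\emptyset,\O_2)}(*,x_2)=x_2$. Since the operation $\amalg$ in $E(F_\Eta)$ is just the union of the underlying sets of components, both sides of the displayed identity then collapse to $\ps_{\O_2}(x_2)$.

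For the inductive step with $\O_1\ne\pmb\emptyset$, I would fix a base point $(\omega,\rho)\in\O_1\subseteq\O_1\amalg\O_2$. Recall that, relative to this base point, the construction of $\ps_{\O_1}$ peels off a unique indecomposable first component: there exist $\Theta$ with $(\omega,\rho)\in\Theta\subseteq\O_1$, an element $y\in F_\Eta[\Theta]$, and $x_1'\in F[\O_1-\Theta]$ with $x_1=\eta_{(\Theta,\O_1-\Theta)}(y,x_1')$ and $\ps_{\O_1}(x_1)=\{(y,\Theta)\}\cup\ps_{\O_1-\Theta}(x_1')$. The decisive move is to invoke pointwise associativity \eqref{eq:passoc} to rewrite
\begin{equation*}
\eta_{(\O_1,\O_2)}(x_1,x_2)=\eta_{(\Theta,(\O_1-\Theta)\amalg\O_2)}\big(y,\eta_{(\O_1-\Theta,\O_2)}(x_1',x_2)\big).
\end{equation*}
As $(\omega,\rho)\in\Theta$ is equally a base point for $\O_1\amalg\O_2$ and $y$ is indecomposable, the uniqueness of the decomposition \eqref{eq:FOmegaDecomp} (that is, Lemmas~\ref{lem:Fzerl} and \ref{lem:Fdisj}) forces $\ps_{\O_1\amalg\O_2}$ to extract exactly the same pair $(y,\Theta)$, with remainder $\eta_{(\O_1-\Theta,\O_2)}(x_1',x_2)$. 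Applying the inductive hypothesis to this remainder (legitimate since $\Theta\ne\pmb\emptyset$ guarantees $\|\O_1-\Theta\|<\|\O_1\|$) and reassembling yields the claimed identity.

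The main obstacle I anticipate is the bookkeeping around well-definedness: one must be certain that the component $(y,\Theta)$ stripped from $x_1$ by $\ps_{\O_1}$ is identical to the one stripped from $\eta_{(\O_1,\O_2)}(x_1,x_2)$ by $\ps_{\O_1\amalg\O_2}$. This is precisely where pointwise associativity \eqref{eq:passoc} is indispensable; the mere $3$-associativity of Lemma~\ref{lem:assoc3}, which holds only at the level of sets, would not let one move an \emph{individual} element $y$ to the front of the bracketing. Pointwise commutativity \eqref{eq:pcomm} enters only in the background, through the base-point independence of $\boldsymbol\ps$ established (via Lemma~\ref{lem:Feta-assoc}) inside the proof of Proposition~\ref{lem:E(Feta)}, and so need not be invoked explicitly in this argument.
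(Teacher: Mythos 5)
Your proof is correct, but it takes a genuinely different route from the paper's. The paper disposes of Theorem~\ref{thm:commeta} in one line: it applies Lemma~\ref{lem:etatrans} (transport of a composition operator along a species isomorphism) to the natural operator \eqref{eq:natcomp} on $E(F_\Eta)$ and the isomorphism $\boldsymbol\ps^{-1}$, which shows that the right-hand side of \eqref{eq:etapsi} \emph{is} a composition operator for $F$; the identification of this transported operator with the original $\Eta$ is left to the reader as implicit in the construction of $\boldsymbol\ps$. What you prove --- the intertwining identity $\ps_{\O_1\amalg\O_2}\circ\eta_{(\O_1,\O_2)}=\amalg\circ(\ps_{\O_1}\times\ps_{\O_2})$, by induction on $\|\O_1\|$ --- is precisely that missing identification, so your argument is more self-contained than the printed one. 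The inductive step is sound: pointwise associativity \eqref{eq:passoc} is exactly what allows you to move the \emph{individual} indecomposable element $y$ to the outermost position of the bracketing (set-level $3$-associativity would not suffice, as you rightly observe), and Lemmas~\ref{lem:Fzerl} and \ref{lem:Fdisj}, together with the injectivity of the $\Eta$-maps, then force $\ps_{\O_1\amalg\O_2}$ to strip off the same pair $(y,\Theta)$ with remainder $\eta_{(\O_1-\Theta,\O_2)}(x_1',x_2)$; computing $\ps_{\O_1\amalg\O_2}$ with a base point chosen inside $\O_1$ is legitimate by the base-point independence established in the proof of Proposition~\ref{lem:E(Feta)}.

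One small point should be shored up. In the base case you justify the left unit law $\eta_{(\pmb{\emptyset},\O_2)}(\ast,x_2)=x_2$ (where $\ast$ denotes the unique element of $F[\pmb{\emptyset}]$) by citing the paper's assertion that diagram~\eqref{diag:2} always commutes. That assertion does not hold for arbitrary composition operators: in Example~\ref{ex:3} the map $\eta_{(\pmb{\emptyset},\O)}(\ast,\,\cdot\,)$ is complementation rather than the identity. Under the standing hypotheses of the theorem, however, the unit law is easily derived: applying \eqref{eq:passoc} to the triple $(\O_2,\pmb{\emptyset},\pmb{\emptyset})$ gives $\sigma\circ\sigma=\sigma$ for the injective map $\sigma:=\eta_{(\O_2,\pmb{\emptyset})}(\,\cdot\,,\ast)$, whence $\sigma=\id$, and pointwise commutativity \eqref{eq:pcomm} then yields $\eta_{(\pmb{\emptyset},\O_2)}(\ast,x_2)=x_2$. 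With that justification in place of the appeal to diagram~\eqref{diag:2}, your induction is watertight.
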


\begin{proof}
One combines Lemma~\ref{lem:etatrans} with
Proposition~\ref{lem:E(Feta)}, where the role of the isomorphism 
$\boldsymbol\ph$ in Lemma~\ref{lem:etatrans} is played by the family
of maps $\boldsymbol\ps^{-1}$ constructed in the proof of
Proposition~\ref{lem:E(Feta)}. 
\end{proof}

In summary, all decomposable $r$-sort species with
pointwise associative and commutative
composition operator can be constructed from $E(G)$, for some species
$G$, equipped with the natural composition operator as given in 
\eqref{eq:natcomp} (in the case where $G=F_\Eta$), by applying a lift
in the sense of Lemma~\ref{lem:etatrans} via a species isomorphism.
In the more general setting of \cite{MennAA}, the identification of
the ``indecomposable" objects $G$ (denoted by $\text {\tt L}(F,\zeta)$ there) 
is explained 
in \cite[Sec.~2.6]{MennAA}, while the isomorphism between $F$ and
``composite objects built from $G$" (denoted by $\text {\tt E}G=
\text {\tt E\,L}(F,\zeta)$
there) is constructed in \cite[Lemma~2.8]{MennAA}.

In order to see how Example~\ref{ex:2} in Section~\ref{sec:illust} 
fits into the setting of Theorem~\ref{thm:commeta}, recall that the 
isomorphism between $F$ and $E(F_\Eta)$ in that example can be
defined by mapping the bipartite graph $b\in F[\O]$ to its complement
$b^c$, identifying the connected components (in the classical sense
of graph theory) of $b^c$, and forming the set of complements of
these connected components (restricted to the set of vertices which
a component involves). If this isomorphism is inserted in
\eqref{eq:etapsi}, the result is \eqref{eq:eta'c}.

\medskip
We conclude our paper by pointing out that the construction in
Theorem~\ref{thm:commeta} can be ``twisted" to produce
pointwise non-associative
and non-commutative composition operators as well, thereby 
obtaining a large family of examples that still fit under our theory 
but not under Menni's.

\begin{theorem} \label{thm:Etwist}
Let $G$ be a weighted $r$-sort species, and let $\gg:G\to G$ be a
weight-preserving isomorphism. 
We extend $\gg$ to $E(G)$ by setting
$$
g_{\O\amalg\cdots\amalg\O_k}\Big(\big\{(y_1,\O_1),\dots,(y_k,\O_k)\big\}\Big)
=\big\{(g_{\O_1}(y_1),\O_1),\dots,(g_{\O_k}(y_k),\O_k)\big\}.
$$
Then the family
$\Eta=(\eta_{(\O_1,\O_2)})_{(\O_1,\O_2)\in\Obj(\mathfrak D_r)}$ of
maps defined by
\begin{equation*}
\eta_{(\O_1,\O_2)}(x_1,x_2)=
x_1 \amalg g_{\O_2}(x_2),\quad 
x_1\in E(G)[\O_1],\ x_2\in E(G)[\O_2],
\end{equation*}
where $(\O_1,\O_2)\in\Obj(\mathfrak D_r)$, 
is a composition operator for the weighted species $E(G)$.
\end{theorem}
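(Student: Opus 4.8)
The plan is to verify, one at a time, that each $\eta_{(\O_1,\O_2)}$ is injective, that $\Eta$ is a natural transformation from $E(G)\times E(G)$ to $E(G)\circ\amalg$, that the pair $(E(G),\Eta)$ satisfies Axiom~(D1), and finally that the lifted weight is compatible with $\Eta$ in the sense of Axiom~(W2). The single observation driving all four steps concerns the image of $\eta_{(\O_1,\O_2)}$. A member $S$ of $E(G)[\O]$ is a set of pairs $(y,\Theta)$ whose supports $\Theta$ partition $\O$ into non-empty parts; call these the \emph{blocks} of $S$. Since $g_{\O_2}$ is a bijection of $E(G)[\O_2]$ and forming $x_1\amalg g_{\O_2}(x_2)$ merely takes the union of underlying sets of pairs, we have
\[
\eta_{(\O_1,\O_2)}\big(E(G)[\O_1]\times E(G)[\O_2]\big)
=\big\{S\in E(G)[\O_1\amalg\O_2]:\ \text{each block of }S\text{ lies in }\O_1\text{ or in }\O_2\big\}.
\]
In particular this image does not depend on the twist $\gg$. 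Injectivity is then immediate: given $S=x_1\amalg g_{\O_2}(x_2)$, the blocks contained in $\O_1$ reconstitute $x_1$ and those contained in $\O_2$ reconstitute $g_{\O_2}(x_2)$, whence $x_2=g_{\O_2}^{-1}$ of the latter; thus $(x_1,x_2)$ is uniquely recovered.

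Naturality (commutativity of \eqref{Eq:NatDiag} with $F=E(G)$) reduces to the identity
\[
E(G)[\ff_1\amalg\ff_2]\big(x_1\amalg g_{\O_2}(x_2)\big)
= E(G)[\ff_1](x_1)\amalg g_{\widetilde\O_2}\big(E(G)[\ff_2](x_2)\big),
\]
where $\ff=(\ff_1,\ff_2)$ is a morphism of $\mathfrak{D}_r$. On the left, the relabelling carries $\O_1$-blocks by $\ff_1$ and $\O_2$-blocks by $\ff_2$, so it acts as $E(G)[\ff_1]$ on $x_1$ and as $E(G)[\ff_2]$ on $g_{\O_2}(x_2)$; the identity then follows from $E(G)[\ff_2](g_{\O_2}(x_2))=g_{\widetilde\O_2}(E(G)[\ff_2](x_2))$, which is exactly the naturality of the block-wise extension of $\gg$ to $E(G)$ (inherited from $\gg:G\to G$ being a natural transformation).

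Axiom~(D1) is the heart of the matter, and here twist-independence of images pays off. Applying the displayed description of the image to each of the four $\eta$-images occurring in \eqref{Eq:DecompAx}, one finds that the left-hand side of \eqref{Eq:DecompAx} consists of those $S\in E(G)[\O]$ each of whose blocks lies in $\O_1$ or $\O_2$ \emph{and} in $\widetilde\O_1$ or $\widetilde\O_2$, i.e.\ each block lies in one of the four intersections $\O_{ij}=\O_i\cap\widetilde\O_j$. Unwinding the nested right-hand side of \eqref{Eq:DecompAx} the same way---the two inner $\eta$'s yield the sets supported on $\O_1$ (resp.\ $\O_2$) whose blocks lie in $\O_{11}$ or $\O_{12}$ (resp.\ $\O_{21}$ or $\O_{22}$), and the outer $\eta$ glues them---produces precisely the same collection of $S$. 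Hence both sides coincide and (D1) holds; because each image is unaffected by the bijections $g$, this computation is purely set-theoretic.

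Finally, Axiom~(W2) follows from multiplicativity of the lifted weight over blocks together with the hypothesis that $\gg$ is weight-preserving. For $x_1\in E(G)[\O_1]$ and $x_2\in E(G)[\O_2]$, the blocks of $\eta_{(\O_1,\O_2)}(x_1,x_2)=x_1\amalg g_{\O_2}(x_2)$ are exactly those of $x_1$ together with those of $g_{\O_2}(x_2)$, so
\[
w\big(\eta_{(\O_1,\O_2)}(x_1,x_2)\big)=w(x_1)\cdot w\big(g_{\O_2}(x_2)\big)=w(x_1)\cdot w(x_2),
\]
the last step because the block-wise extension of $\gg$ preserves weights. This is (W2), completing the proof that $\Eta$ is a composition operator for the weighted species $E(G)$. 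The main obstacle is the bookkeeping in (D1); everything else is routine, and the crucial simplification throughout is that, $\gg$ being bijective and weight-preserving, it alters neither the images of the $\eta$-maps nor the weights, so that (D1) and (W2) reduce to the untwisted situation.
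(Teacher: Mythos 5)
The paper states Theorem~\ref{thm:Etwist} without proof (the verification is left to the reader), so there is no argument of the authors' to compare against; your proposal supplies a complete and correct verification. Your key observation --- that the image of $\eta_{(\O_1,\O_2)}$ is exactly the set of $S\in E(G)[\O_1\amalg\O_2]$ all of whose blocks have support contained in $\O_1$ or in $\O_2$, and that this image is independent of the twist because $g_{\O_2}$ permutes $E(G)[\O_2]$ while preserving block supports --- correctly reduces injectivity, naturality and Axiom~(D1) to the untwisted disjoint-union situation, and your identification of both sides of \eqref{Eq:DecompAx} with ``all blocks supported in one of the four sets $\O_{ij}$'' is exactly right. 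The (W2) check is also correct, under the natural reading that ``weight-preserving'' for $\gg:G\to G$ means the ring homomorphism $\lambda$ in \eqref{eq:phiweight} is $\mathrm{id}_\Lambda$, which is clearly what is intended. I see no gaps.
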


It is obvious from the definition that the composition operator
$\Eta$ of Theorem~\ref{thm:Etwist} will, in general, be 
neither pointwise associative nor pointwise commutative and, thus,
not fit into the theory in \cite{MennAA}. 
Example~\ref{ex:3} in
Section~\ref{sec:illust} provides a typical example of the above
construction, with $G$ given by
$$G[\O]=\begin{cases} \{0_\O,1_\O\},&\text{if }|\O|=1,\\
\{\},&\text{otherwise,}\end{cases}$$
where $0_\O$ and $1_\O$ are the constant functions on $\O$ taking the value
$0$ and $1$, respectively, and where the isomorphism $\gg$ is 
given by
$g_{\O}(0_\O)=1_\O$ and $g_{\O}(1_\O)=0_\O$ for $|\O|=1$.
However, we expect that there are many composition
operators $\Eta$ not obtainable in this way.

\section*{Acknowledgement}
We thank Mat\'\i as Menni for explaining his work in \cite{MennAA} 
to us, and for his patience in discussing the relationship between
his work and the work of the present paper, leading to a
significantly improved presentation of our results, by putting
them in perspective.

\end{document}